 \theoremstyle{definition}
 \newtheorem{theorem}{Theorem}[section]%
 \newtheorem{corollary}[theorem]{Corollary}
 \newtheorem{lemma}[theorem]{Lemma}
 \newtheorem{definition}[theorem]{Definition}
  \numberwithin{equation}{section}
\newtheorem*{mrst}{Main Result}
\newtheorem*{example}{Example}
\theoremstyle{remark}
\newcommand\R{\mathbb R}
\newcommand\N{\mathbb N}
\newcommand{\id}{\mathrm{id}}
\newcommand\Wp{\mathcal{W}_p}
\newcommand\Wt{\mathcal{W}_2}
\newcommand\Wo{\mathcal{W}_1}
\newcommand\dwpp{d_{\Wp}^p}
\newcommand\dwp{d_{\Wp}}
\newcommand\dwt{d_{\Wt}}
\newcommand{\dwo}{d_{\mathcal{W}_1}}
\newcommand{\isom}{\mathrm{Isom}}
\newcommand{\prob}{\mathcal{P}}
\newcommand{\tgamma}{\widetilde{\gamma}}
\newcommand*{\bigchi}{\mbox{\Large$\chi$}}
\newcommand{\ler}[1]{\left( #1 \right)}
\newcommand{\lesq}[1]{\left[ #1 \right]}
\newcommand{\dd}{\,\mathrm{d}}
\newcommand{\dt}{\mathrm{d}t}
\newcommand{\dx}{\mathrm{d}x}
\newcommand{\x}{x}
\newcommand{\h}{h}
\newcommand{\0}{0}
\newcommand{\inner}[2]{\left< #1,#2 \right>}
\newcommand{\norm}[1]{\left|\left|#1\right|\right|}
\newcommand{\potmu}{\mathcal{T}_\mu^p}
\newcommand{\potmuf}{\mathcal{T}_\mu^4}
\newcommand{\potphimu}{\mathcal{T}_{\Phi(\mu)}^p}
\newcommand{\func}{\mathcal{G}}
\newcommand{\supp}{\mathrm{supp}}
\newcommand{\proj}{\mathfrak{p}}
\newcommand{\ppi}{{\boldsymbol\pi}}
\newcommand{\neighb}{\overset{n}{\sim}}
\title[The isometry group of Wasserstein spaces: the Hilbertian case
]{The isometry group of Wasserstein spaces:\\ the Hilbertian case}                                                                                            
\author[Gy\"orgy P\'al Geh\'er]{Gy\"orgy P\'al Geh\'er}
\address{Gy\"orgy P\'al Geh\'er, Department of Mathematics and Statistics\\ University of Reading\\ Whiteknights\\ P.O.
Box 220\\ Reading RG6 6AX\\ United Kingdom}
\email{gehergyuri@gmail.com}
\author[Tam\'as Titkos]{Tam\'as Titkos}
\address{Tam\'as Titkos, Alfr\'ed R\'enyi Institute of Mathematics\\ Hungarian Academy of Sciences\\ Re\'altanoda u. 13-15.\\
Budapest H-1053\\ Hungary\\ and BBS University of Applied Sciences\\ Alkotm\'any u. 9.\\
Budapest H-1054\\ Hungary}
\email{titkos.tamas@renyi.mta.hu \newline http://renyi.hu/\~{}titkos}
\author[D\'aniel Virosztek]{D\'aniel Virosztek}
\address{D\'aniel Virosztek, Institute of Science and Technology Austria \\ Am Campus 1 \\ 3400 Klos\-ter\-neuburg \\ Austria \\
and Alfr\'ed R\'enyi Institute of Mathematics\\ Hungarian Academy of Sciences\\ Re\'altanoda u. 13-15.\\
Budapest H-1053\\ Hungary}
\email{virosztek.daniel@renyi.hu \newline https://users.renyi.hu/\~{}dviroszt}
\begin{document}
\subjclass[2010]{Primary: 54E40; 46E27  Secondary: 60A10; 60B05}

\keywords{Wasserstein space, optimal transport, Hilbert space, isometric rigidity, exotic isometries, geodesics, strict triangle inequality}

\thanks{Geh\'er was supported by the Leverhulme Trust Early Career Fellowship (ECF-2018-125), and also by the Hungarian National Research, Development and Innovation
Office - NKFIH (grant no. K115383 and K134944).}
\thanks{Titkos was supported by the Hungarian National Research, Development and Innovation Office - NKFIH (grant no. PD128374, grant no. K115383 and K134944), by the János Bolyai Research Scholarship of the Hungarian Academy of Sciences, and by the ÚNKP-20-5-BGE-1 New National Excellence Program of the Ministry of Innovation and Technology.}
\thanks{Virosztek was supported by the European Union’s Horizon 2020 research and innovation program under the Marie Sklodowska-Curie Grant Agreement No. 846294, by the Momentum program of the Hungarian Academy of Sciences under grant agreement no. LP2021-15/2021, and partially supported by the Hungarian National Research, Development and Innovation Office - NKFIH (grants no. K124152 and no. KH129601).}

\begin{abstract}
Motivated by Kloeckner's result on the isometry group of the quadratic Wasserstein space $\mathcal{W}_2(\mathbb{R}^n)$, we describe the isometry group $\isom(\Wp(E))$ for all parameters $0 < p < \infty$ and for all separable real Hilbert spaces $E.$ In particular, we show that $\Wp(X)$ is isometrically rigid for all Polish space $X$ whenever $0<p<1$.  This is a consequence of our more general result: we prove that $\mathcal{W}_1(X)$ is isometrically rigid if $X$ is a complete separable metric space that satisfies the strict triangle inequality. Furthermore, we show that this latter rigidity result does not generalise to parameters $p>1$, by solving Kloeckner's problem affirmatively on the existence of mass-splitting isometries.
\end{abstract}

\maketitle
\tableofcontents

\section{Introduction and main results}\label{s:intro}

Let $(X,\rho)$ be a complete and separable metric space, and denote by $\mathcal{P}(X)$ the set of all Borel probability measures on $X$. Due to many nice geometric features, transport related metrics and techniques on $\mathcal{P}(X)$ have received increased attention in both pure and applied mathematics recently. We mention here only a few papers \cite{Figalli,hairer,navier-stokes,ml2,LV}, for a comprehensive overview and for more references we refer the reader to Ambrosio's, Santambrogio's and Villani's textbooks \cite{Ambrosio,Santambrogio,villani-book,villani-ams-book}.
Probably the most important transport related metric on sufficiently concentrated probability measures is the so-called $p$-Wasserstein metric ($0<p<\infty$). Bertrand and Kloeckner dedicated a whole series of papers \cite{bertrand-kloeckner-hadamard,bertrand-kloeckner-2016,Kloeckner-2010,kloeckner-hausdorff, ultrametric} to understand and describe some important geometric properties of $2$-Wasserstein spaces including the structure of their isometries. For more results concerning the strucutre of isometries with respect to different probability metrics we refer the reader to the papers \cite{dolinar-molnar,kuiper,geher-titkos,isemb-jmaa, molnar-levy,virosztek-asm-szeg}.

We highlight the paper \cite{Kloeckner-2010}, since it serves as the main motivation for our work. In that paper Kloeckner described the isometry group of $\mathcal{W}_2(\R^n)$, the quadratic Wasserstein space built on $\R^n$ (see the precise definition later). When describing the isometry group of a metric space of measures, it is a standard phenomenon that isometries of the underlying structure appear by means of push-forward. These isometries are called trivial isometries. It is a natural question whether all isometries of $\Wt(\R^n)$ are trivial, in other words, whether the isometry group of $\Wt(\R^n)$ is isomorphic to the isometry group of $\R^n$? Kloeckner showed that the answer to this question is negative in the case of $\Wt(\R^n)$, moreover, there is an important difference between the cases when the underlying Euclidean space is one-dimensional, and when it is multi-dimensional. On the one hand, if $n\geq2$ then every isometry $\Phi$ of $\Wt(\R^n)$ has a special feature: they preserve the shape of measures. This means that for all measures $\mu$ there exists an isometry $\psi_{\mu}$ of $\R^n$ (depending on $\mu$) such that $\Phi(\mu)$ is the push-forward of $\mu$ with respect to $\psi_\mu$. On the other hand, the isometry group of $\Wt(\R)$ contains a one-parameter subgroup of wildly behaving elements that do not even preserve the shape of measures. Such isometries are called exotic isometries. Motivated by this latter striking result, in \cite{GTV} we gave a complete characterisation of isometries of $p$-Wasserstein spaces built on the real line $\R$ for all parameters $1\leq p<\infty$. It turned out that the $p=2$ case is exceptional in the sense that if $p\neq2$ then all isometries of $\Wp(\R)$ are trivial.\\

Our aim in this paper is to present a broad extension of Kloeckner's multidimensional results ($\mathcal{W}_2(\R^n)$ $n\geq2)$ on the isometry group, namely 
\begin{itemize}
    \item[-] \emph{to handle the case of arbitrary $0<p<\infty$}
    \item[-] \emph{to drop the assumption of finite-dimensionality.}
\end{itemize}
It turns out that the case $p=2$ is again exceptional: for any separable real Hilbert space $E$ with $\dim E\geq2$ the Wasserstein space $\Wp(E)$ admits nontrivial isometries if and only if $p=2$. The main results of this paper can be informally summarized as follows. 

\begin{mrst}\emph{Let $E$ be a separable real Hilbert space of dimension at least two. For a positive real number $p$ let us denote the $p$-Wasserstein space built on $E$ by $\Wp(E)$. Assume that $\Phi$ is a distance preserving bijection, i.e. an isometry of $\Wp(E)$.}
\begin{itemize}
    \item[(a)] \emph{If $p\neq2$, then $\Phi$ is necessarily a push-forward of an isometry $\psi$ of $E$, that is}
    \begin{equation*}
    \Phi(\mu)={\psi}_{\#}\mu\qquad\big(\mu\in\Wp(E)\big).
    \end{equation*}
    
    \item[(b)]\emph{If $p=2$ and $E$ is infinite dimensional then $\Phi$ can be written as the following composition:}
    \begin{equation*}
    \Phi(\mu) = \ler{\psi \circ t_{m(\mu)}\circ R \circ t_{m(\mu)}^{-1}}_\#\mu \qquad (\mu\in\Wt(E)),
    \end{equation*}
    \emph{where $\psi\colon E\to E$ is an affine isometry, $R\colon E\to E$ is a linear isometry, and $t_{m(\mu)}\colon E\to E$ is the translation on $E$ by the barycenter $m(\mu)$ of $\mu$.}
\end{itemize}
\end{mrst}
Part (b) is a natural extension of Kloeckner's results on $\mathcal{W}_2(\R^n)$, while in part (a) we developed essentially new techniques to prove isometric rigidity. As the value of $p$ affects basic properties of the cost function, the proof of the above statement has to be divided into four separate cases.
In Subsection \ref{ss:recovery} we handle the case when $1\leq p<\infty$ and $p$ is not an even integer. Due to the fact that the cost function is not smooth in one point, as a key step of the proof of Theorem \ref{thm:noteven}, we recover the atoms of any measure $\mu$ by means of the following potential function:
\begin{equation*}
	\potmu\colon E\to \R, \quad x\mapsto \dwpp(\mu,\delta_x) = \int_E \norm{x-y}^p \dd\mu(y)
\end{equation*}
where $\dwp$ is the Wasserstein distance defined in \eqref{eq:wasser_def} below. 
When $p$ is an even positive integer, this potential function does not carry enough information to identify measures. Note that Kloeckner's method to prove the finite dimensional version of (b) above does not work in our infinite dimensional setting, as he uses absolutely continuous measures that have no analogue in infinite dimension. 
We prove (b) in Subsection \ref{ss:2} as Theorem \ref{thm:2}. In Subsection \ref{ss:468} we show that for $p = 2k$ with $k\in\N, k\geq2$ isometries map measures supported on a line into measures supported on another line, which allows us to utilise our recent result from \cite{GTV}, see Theorem \ref{thm:468}. 
Finally, in Section \ref{s:ple1} we prove isometric rigidity of 1-Wasserstein spaces over metric spaces $(X,\rho)$ satisfying the strict triangle inequality 
\begin{equation*}
    \rho(x,y) < \rho(x,z) + \rho(z,y) \quad (x,y,z,\in X, z\notin\{x,y\}),
\end{equation*}
see Theorem \ref{thm:concave-cost}. As a consequence we obtain isometric rigidity of $\Wp(E)$ for the concave case, $0<p<1$.  In fact, our argument shows that $\Wp(X)$ is isometrically rigid for every
Polish space $X$ if $0<p<1$.

The starting point in each of the above cases will be to see that any isometry maps Dirac measures into Dirac measures, that is, they do not split mass. Let us point out that one has to be cautious here. Although isometries do not split mass in the cases that were investigated earlier, Kloeckner posed the following problem in \cite[Question 2]{Kloeckner-2010}:
\begin{center}
    \emph{\lq\lq Does there exist a Polish (or Hadamard) space $X$ whose Wasserstein space $\mathcal{W}_2(X)$ possesses an isometry that does not preserve the set of Dirac measures?''}
\end{center}
We shall see in Section \ref{s:example} that, contradicting to intuition, such a Polish space exists for all parameters $p\geq 1$. In \cite{GTV} we showed that the $1$-Wasserstein space built over the line segment $[0,1]$ possesses isometries that send Dirac measures into measures typically supported on two points -- hence split mass. Using this example, in Section \ref{s:example} we construct another Polish space which illustrates that the answer to the above question is indeed affirmative for all parameters $p \geq 1$. Furthermore, this will also show that the above mentioned Theorem \ref{thm:concave-cost} is sharp in the sense that $\Wp(X)$ spaces over metric spaces satisfying the strict triangle inequality are not isometrically rigid in general if $p>1$.

We note that as mass-splitting isometries are clearly exotic and hence also non-trivial, our construction described in Section \ref{s:example} solves another open problem of Kloeckner affirmatively (\cite[Question 1]{Kloeckner-2010}):

\begin{center}
    \emph{\lq\lq Does there exist a Polish (or Hadamard) space $X \neq \R$ such that $\mathcal{W}_2(X)$ admits exotic isometries? Does there exist a Polish (or Hadamard) space $X \neq \R^n$ such that $\mathcal{W}_2(X)$ admits non-trivial isometries?''}
\end{center}
\smallskip

Now, we set the terminology. Let $\mu,\nu\in\prob(X)$ be two Borel probability measures on the complete and separable metric space $\big(X,\rho\big)$.  The support $\supp(\mu)$ of a $\mu\in\prob(X)$ is defined to be the smallest closed subset of $X$ for which every open neighbourhood of every point of the set has positive measure.
A Borel probability measure $\pi$ on $X \times X$ is said to be a \emph{coupling} of (or \emph{transport plan} for) $\mu$ and $\nu$ if the marginals of $\pi$ are $\mu$ and $\nu$, that is, $\pi\ler{A \times X}=\mu(A)$ and $\pi\ler{X \times B}=\nu(B)$ for all Borel sets $A,B\subseteq X$. The set of all couplings is denoted by $\Pi(\mu,\nu)$. For any parameter value $0<p<\infty$ one can define the \emph{$p$-Wasserstein space} $\Wp(X)$ as the set of all $\mu\in\prob(X)$ that satisfy $\int_X \rho(x,\hat{x})^p~\mathrm{d}\mu(x)<\infty$ for some (hence all) $\hat{x}\in X$, endowed with the \emph{$p$-Wasserstein distance}
\begin{equation} \label{eq:wasser_def}
\dwp\ler{\mu, \nu}:=\ler{\inf_{\pi \in \Pi(\mu, \nu)} \int_{X \times X} \rho(x,y)^p~\dd \pi(x,y)}^{\min\big\{{\frac{1}{p}},1\big\}}.
\end{equation}
A coupling $\pi\in\Pi(\mu,\nu)$ is called an \emph{optimal coupling} if the infimum in \eqref{eq:wasser_def} is a minimum and it is attained at $\pi$. The set of all optimal couplings for $\mu$ and $\nu$ is denoted by $\Pi^0(\mu,\nu)$. 

Distance preserving bijections are termed as $\emph{isometries}$ and the symbol $\isom(\cdot)$ refers to the isometry group. The \emph{push-forward map} $g_\# \colon \prob(X)\to\prob(X)$ induced by a measurable function $g\colon X \rightarrow X$ is defined by $\big(g_\#(\mu)\big)(A)=\mu(g^{-1}[A])$
for all $A\subseteq X$ Borel set and $\mu\in\prob(X)$, where $g^{-1}[A]=\{x\in X\,\colon\, g(x)\in A\}$.

The set of all Dirac measures is denoted by $\Delta(X)$. Note that if $1\leq p<\infty$, then $\Wp(X)$ contains an isometric copy of $X$, since the embedding
\begin{equation*}
\iota\colon X\to\Wp(X),\qquad \iota(x)=\delta_x
\end{equation*}
is distance preserving. Moreover, the image of $\iota$ can be considered as the core of the Wasserstein space in the sense that its convex span -- the set of finitely supported probability measures -- is a dense subset of $\Wp(X)$ with respect to the topology of weak convergence.

For a given $\psi\in\isom(X)$ the push-forward map $\psi_\#$ belongs to $\isom(\Wp(X))$ for all $0<p<\infty$, and the action of $\psi_\#$ on $\Delta(X)$ is given by $\psi_\#(\delta_x)=\delta_{\psi(x)}$. Isometries of this push-forward type are termed as \emph{trivial isometries}. A $p$-Wasserstein space $\Wp(X)$ is called \emph{isometrically rigid} if the push-forward map
$$\#\colon\isom(X)\to\isom(\Wp(X));\qquad \psi\mapsto \psi_{\#}$$
is surjective, in other words, if every isometry is trivial. Let us remark that if an isometry $\Phi\in\isom(\Wp(X))$ maps $\Delta(X)$ onto $\Delta(X)$, then it defines a map $\psi:X\to X$ via the identity 
\begin{equation}\label{E:hatas}
\Phi(\delta_x)=\delta_{\psi(x)}\qquad(x\in X).
\end{equation}
The map $\psi$ clearly belongs to $\isom(X)$, since $\dwp(\delta_x,\delta_y)=\rho(x,y)^{\min\{p,1\}}$ for all $x,y\in X$. Nonetheless, we have to be careful for two reasons:
\begin{itemize}
    \item[-] in general nothing guarantees that an isometry maps $\Delta(X)$ onto itself, that is, a Wasserstein space may possess mass-splitting isometries, see Section \ref{s:example}; and
    \item[-] even if it does, \eqref{E:hatas} does not imply the form $\Phi=\psi_{\#}$, cf. part (b) of the Main Theorem and \cite[Theorems 1.1 and 1.2]{Kloeckner-2010}.
\end{itemize}

\section{Kloeckner's problems on mass splitting isometries}\label{s:example}
In this section we answer the aforementioned questions \cite[Questions 1-2]{Kloeckner-2010} affirmatively by showing that for all $p > 1$ there exists a Polish space $X$ such that the $p$-Wasserstein space $\Wp(X)$ possesses mass splitting isometries. 
For $p=1$ this question was recently answered by the authors in \cite{GTV}. We recall the details below, as we shall manipulate this example in order to answer the question in the case of strictly convex cost ($p>1$).
Let us denote by $Y$ the complete separable metric space $([0,1],|\cdot|)$. A special feature of this space is that the $1$-Wasserstein distance in $\Wo(Y)$ can be calculated by means of cumulative distribution functions and quantile functions. With elementary manipulations, both functions can be considered as right-continuous $[0,1]\to[0,1]$-type functions. The cumulative distribution function can be defined as 
\begin{equation*}F_\mu(x):=\mu([0,x])\qquad(x\in[0,1]),
\end{equation*}
while the \emph{quantile function} of $\mu$ is is defined by
\begin{equation*}
 F_\mu^{-1}(y):=\sup \left\{ x\in\mathbb{R} \, : \, F_\mu(x) \leq y \right\}\qquad(x\in(0,1)).\end{equation*}
In order to obtain a $[0,1]\to[0,1]$-type function, we set $F_{\mu}^{-1}$ by right-continuity at $0$ and we set $F_\mu^{-1}(1)=1$.
According to Vallender \cite{Vallender}, the $1$-Wasserstein distance of $\mu,\nu\in\mathcal{W}_1(Y)$ can be calculated by
\begin{equation} \label{eq:vall-central}
d_{\mathcal{W}_1}(\mu, \nu) = \int_{0}^1 |F_\mu(x)-F_\nu(x)|~\dx = \int_0^1 |F_\mu^{-1}(x)-F_\nu^{-1}(x)|~\dx
\end{equation}
for all $\mu,\nu\in\Wo(Y)$.
Therefore the map $j$ called \emph{flip}
\begin{equation}\label{eq:flip}
j\colon \, \Wo(Y)\rightarrow \Wo(Y), \qquad \mu \mapsto j(\mu), \quad F_{j(\mu)}=F_{\mu}^{-1}
\end{equation}
 is an isometry of $\Wo(Y)$.
\begin{figure}[H]
\centering
\includegraphics[width = 4in]{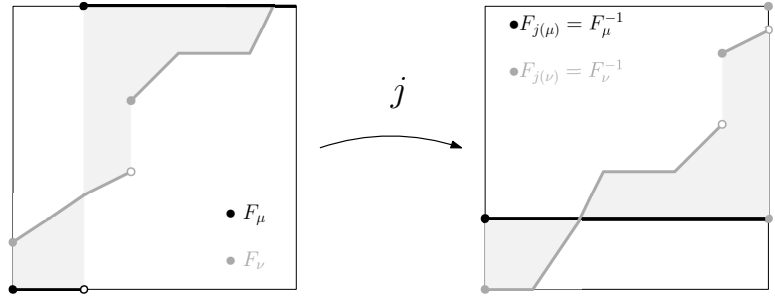}
\caption{Illustration for \eqref{eq:vall-central} and \eqref{eq:flip}.}
\end{figure}
Observe immediately that this map does not leave the set $\Delta([0,1])$ invariant
$$j(\delta_t) = t\cdot\delta_0+(1-t)\cdot\delta_1 \qquad (t\in(0,1)).$$

Now we turn to the strictly convex case $p>1$.

\begin{example}
Fix $1\leq p < \infty$, and let us equip the set $[0,1]$ with the metric $\rho(x,y) = |x-y|^{\frac{1}{p}}$. Let the symbol $X$ stand for the Polish metric space $([0,1],\rho)$. Since this metric space has finite diameter, every Borel probability measure on $[0,1]$ is automatically an element of both $\Wp(X)$ and $\Wo(Y)$. 
Notice that
\begin{align*}
    d_{\Wp(X)}^{p}(\mu,\nu) &= \inf_{\pi\in \Pi(\mu,\nu)}\int\limits_{{[0,1]\times[0,1]}}\rho(x,y)^p \;\mathrm{d}\pi(x,y) \\ 
    &= \inf_{\pi\in \Pi(\mu,\nu)}\int\limits_{{[0,1]\times[0,1]}}\big|x-y\big|\; \mathrm{d}\pi(x,y)= d_{\Wo(Y)}(\mu,\nu) \qquad (\mu,\nu\in\prob([0,1])),
\end{align*}
and therefore the map $j$ is also an isometry of $\Wp(X)$.
\end{example}
The isometry $j$ does not preserve the set of Dirac measures thus it cannot be a push-forward of any isometry of $X$. Consequently, this example answers Kloeckner's problems affirmatively.

We note that $j$ demonstrates also that an isometry does not preserve the existence of a transport map between measures in general. Indeed, for any $0<s<t<1$ there is a transport map from $\delta_s$ to $\delta_t$ but there is no such map from $j(\delta_s)$ to $j(\delta_t)$.

We close this section by mentioning that as a consequence of Theorem \ref{thm:concave-cost}, the answers to \cite[Questions 1--2]{Kloeckner-2010} are negative in the $0<p<1$ case.

\section{A complete characterization of isometries of $\mathcal{W}_p(E)$ for $1\leq p<\infty$}\label{s:pgeq1}

In this section we describe the structure of isometries of $\Wp(E)$ spaces for all $1\leq p<\infty$. Since the value of $p$ affects the smoothness of the cost function, the proof is divided into three cases: when $p$ is not even, when $p=2$, and when $p=2k$ with $k\in\N$, $k\geq2$.

In light of the example presented in Section \ref{s:example}, in general it is not true that isometries of Wasserstein spaces map Dirac masses into Dirac masses.
However {as we shall see}, if $E$ is a real separable Hilbert space and $1\leq p<\infty$, then for any isometry $\Phi\colon\Wp(E)\to\Wp(E)$ there exists a $\psi\in\isom(E)$ such that $\Phi(\delta_x) = \psi_\#\delta_x$ for all $x\in E$.

First we recall two important notions.
\begin{definition}[Dilation of a measure]
	The dilation of center $x\in E$ and ratio $\lambda \in\R$ is the map
	$$
	D_x^\lambda\colon E\to E, \quad y\mapsto x + \lambda (y-x) = (1-\lambda)x + \lambda y.
	$$
	The dilation of the measure $\mu$ of center $x$ and ratio $\lambda$ is defined by $\left(D_x^\lambda\right)_\# \mu$.
\end{definition}
\begin{definition}[Geodesics]
	A geodesic is an isometric embedding $\gamma\colon I \to \Wp(E)$, that is,
	$$|s-t|=\dwp(\gamma(s),\gamma(t))\qquad(s,t\in I)$$
	where $I\subseteq\R$ is some closed (finite or infinite) interval. A geodesic is complete if it is defined on the whole real line $\R$.
A geodesic segment is a geodesic where the parameter set is $[0,T]$ for some $T>0$. In the case when the parameter set is $[0,\infty)$ we use the term \emph{geodesic ray}.
\end{definition}

As was proved in \cite[Section 7.2]{Ambrosio} and explained in \cite[Subsection 2.1]{Kloeckner-2010}, there is a one-to-one correspondence between optimal couplings of $\mu$ and $\nu$ and geodesics connecting them, provided that $1<p<\infty$. The explicit statement reads as follows.

\begin{lemma}\label{lem:geod}
	Let $1<p<\infty$, $\mu,\nu\in\Wp(E)$ and $T=\dwp(\mu,\nu)$. 
	Define the map
	$$
	g_s\colon E\times E \to E, \quad g_s(x,y)=(1-s)x+sy
	$$
	for all $0\leq s\leq 1$.
	For any $\pi\in\Pi^0(\mu,\nu)$ optimal coupling, the curve 
	\begin{equation}\label{eq:geod} \gamma\colon [0,T], \quad t \mapsto \left(g_{t/T}\right)_\# \pi \end{equation}
	is a geodesic connecting $\mu$ and $\nu$.
	Conversely, any geodesic between $\mu$ and $\nu$ is obtained in this way.
\end{lemma}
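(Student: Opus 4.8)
The plan is to prove the two assertions separately. The forward direction---that each optimal coupling yields a geodesic---is a direct computation, whereas the converse rests on the structure theory of optimal plans and is where the hypothesis $1<p<\infty$ genuinely enters.

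For the forward direction, fix an optimal coupling $\pi$ and take $0\le s\le t\le T$, writing $a=s/T$ and $b=t/T$. Pushing $\pi$ forward under the single map $(x,y)\mapsto\ler{g_a(x,y),g_b(x,y)}$ produces a coupling of $\gamma(s)=(g_a)_\#\pi$ and $\gamma(t)=(g_b)_\#\pi$; since $g_a(x,y)-g_b(x,y)=(b-a)(x-y)$, this competitor gives
$$
\dwpp\ler{\gamma(s),\gamma(t)} \le \int_{E\times E} \norm{g_a(x,y)-g_b(x,y)}^p \dd\pi(x,y) = |a-b|^p\,\dwpp(\mu,\nu) = |s-t|^p .
$$
Hence $\dwp\ler{\gamma(s),\gamma(t)}\le|s-t|$. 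The matching lower bound follows from the triangle inequality: inserting $\gamma(s)$ and $\gamma(t)$ between $\gamma(0)=\mu$ and $\gamma(T)=\nu$ and feeding in the three upper bounds forces $\dwp\ler{\gamma(s),\gamma(t)}\ge t-s$. Thus $\gamma$ is a geodesic.

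For the converse, the key local step I would isolate is this: if $\eta\in\Wp(E)$ satisfies $\dwp(\mu,\eta)+\dwp(\eta,\nu)=\dwp(\mu,\nu)$ with $\dwp(\mu,\eta)=aT$, then some optimal coupling $\pi$ of $\mu,\nu$ satisfies $\eta=(g_a)_\#\pi$. To prove it I would choose optimal couplings $\pi_1\in\Pi^0(\mu,\eta)$ and $\pi_2\in\Pi^0(\eta,\nu)$ and glue them along $\eta$ into a measure $\sigma$ on $E\times E\times E$ with the prescribed $(1,2)$- and $(2,3)$-marginals. The $(1,3)$-marginal $\pi_{13}$ is a coupling of $\mu$ and $\nu$, so combining the pointwise inequality $\norm{x-z}\le\norm{x-y}+\norm{y-z}$ with Minkowski's inequality in $L^p(\sigma)$ yields a chain of inequalities whose two ends both equal $\dwp(\mu,\nu)$. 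Every inequality is therefore an equality: $\pi_{13}$ is optimal; equality in Minkowski forces $\norm{x-y}=a\norm{x-z}$ and $\norm{y-z}=(1-a)\norm{x-z}$ for $\sigma$-almost every triple; and equality in the triangle inequality forces $y$ onto the segment joining $x$ and $z$. Together these give $y=(1-a)x+az=g_a(x,z)$ $\sigma$-almost everywhere, so $\eta=(g_a)_\#\pi_{13}$ with $\pi_{13}$ optimal. It is precisely the strict convexity of $r\mapsto r^p$ for $1<p<\infty$ that makes Minkowski's inequality rigid here, which is why this breaks down at $p=1$.

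Finally I would globalise. Applying the local step to each $\eta=\gamma(t)$ gives, for every $t$, an optimal coupling representing $\gamma(t)$; the real content is to produce a single coupling valid for all $t$ simultaneously. The plan is to fix a countable dense set of times, glue the corresponding optimal couplings along successive marginals to obtain consistent finite-dimensional distributions, and pass to a limit by Prokhorov's theorem, yielding a Borel probability measure on the path space $C([0,T],E)$. The rigidity of the previous paragraph, applied to consecutive triples along the dense set, shows that this measure is concentrated on affinely parametrised segments $t\mapsto(1-t/T)x+(t/T)z$; its time-$t$ evaluation is then $(g_{t/T})_\#\pi$, where $\pi$ is the law of the endpoint pair $(x,z)$, and continuity in $t$ identifies it with $\gamma(t)$ throughout. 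I expect the main obstacle to be exactly this consistency-and-compactness step: ensuring the successive gluings cohere and that the limiting path measure really lives on straight segments rather than on broken concatenations of them.
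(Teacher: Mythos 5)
The paper does not actually prove this lemma: it is quoted as a known result, with the proof attributed to \cite[Section 7.2]{Ambrosio} (Theorem 7.2.2 there) and its exposition in \cite[Subsection 2.1]{Kloeckner-2010}. Your argument is essentially a reconstruction of that standard proof, and the parts you carry out in full are correct. The forward direction (pushing $\pi$ forward under $(x,y)\mapsto(g_a(x,y),g_b(x,y))$ to get the upper bound $|s-t|$, then the triangle inequality for the lower bound) is complete. The local rigidity step is also sound: gluing $\pi_1\in\Pi^0(\mu,\eta)$ and $\pi_2\in\Pi^0(\eta,\nu)$, the chain of inequalities collapses to equalities, and the two rigidity mechanisms you invoke --- equality in Minkowski for $1<p<\infty$ forcing $\|x-y\|$ and $\|y-z\|$ to be proportional, and equality in the Hilbert-norm triangle inequality forcing $y$ onto the segment $[x,z]$ --- together give $y=g_a(x,z)$ $\sigma$-a.e., hence $\eta=(g_a)_\#\pi_{13}$ with $\pi_{13}$ optimal. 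This correctly isolates why the statement fails at $p=1$. The one piece you leave as a sketch, producing a single optimal plan $\pi$ that represents $\gamma(t)$ for \emph{all} $t$ simultaneously, is exactly the point you flag yourself; in Ambrosio--Gigli--Savar\'e it is handled by the dyadic bisection construction (iterating the local step at midpoints, which produces consistent plans by construction, then concluding by tightness and continuity in $t$), so your outline is the right one, but as written it is a plan rather than a proof of the consistency of the finite-dimensional gluings.
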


In case when $p=2$, Kloeckner characterised Dirac measures in terms of geodesics in \cite[Section 2.3]{Kloeckner-2010}. The following lemma says that the same characterisation holds true for all $1<p<\infty$. Since the proof is rather standard, we relegate it {together} with the proof of Lemma \ref{lem:p=1Dirac} into the Appendix.
\begin{lemma}\label{lem:p>1Dirac}
	Let $1<p<\infty$ and assume that $\dim E \geq 2$. For a measure $\mu\in\Wp(E)$ the following statements are equivalent:
	\begin{itemize}
	    \item[(i)] $\mu$ is a Dirac measure,
	    \item[(ii)] any geodesic segment $\gamma:[0,T]\to\Wp(E)$ issued from $\mu$ (i.e., $\gamma(0)=\mu$) can be extended to $[0,\infty)$.
	\end{itemize}
\end{lemma}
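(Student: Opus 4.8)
The plan is to use throughout the coupling--geodesic correspondence of Lemma~\ref{lem:geod}, together with the rigidity of geodesics emanating from a Dirac mass: since the only element of $\Pi(\delta_{x_0},\nu)$ is the product $\delta_{x_0}\otimes\nu$, every geodesic issued from $\delta_{x_0}$ is a radial dilation of its endpoint. The geometric picture behind the equivalence is that mass leaving a single point $x_0$ travels along rays that all emanate from $x_0$ and hence never cross, so such a motion may be continued forever; a genuinely spread-out measure, on the other hand, can be contracted onto a point in finite time, and prolonging this contraction would force the transport rays to cross at the contraction centre, which for $p>1$ destroys optimality.

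For (i)$\Rightarrow$(ii), I take $\mu=\delta_{x_0}$ and an arbitrary geodesic segment $\gamma\colon[0,T]\to\Wp(E)$ with $\gamma(0)=\delta_{x_0}$, $\gamma(T)=\nu$, $T=\dwp(\delta_{x_0},\nu)$. As $\Pi(\delta_{x_0},\nu)=\{\delta_{x_0}\otimes\nu\}$, Lemma~\ref{lem:geod} forces $\gamma(t)=\ler{D_{x_0}^{t/T}}_\#\nu$, and I simply keep this formula for all $t\in[0,\infty)$. To see it is an isometric embedding I note $\int_E\norm{y-x_0}^p\dd\nu(y)=T^p$, so the deterministic coupling $y\mapsto\ler{D_{x_0}^{s/T}y,\,D_{x_0}^{t/T}y}$ gives $\dwp(\gamma(s),\gamma(t))\le|s-t|$, while the (forced) product coupling yields $\dwp(\gamma(0),\gamma(t))=t$ exactly; feeding the latter equality into the triangle inequality $t\le s+\dwp(\gamma(s),\gamma(t))$ upgrades the upper bound to $\dwp(\gamma(s),\gamma(t))=|s-t|$. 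Thus $\gamma$ extends to a geodesic ray.

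For (ii)$\Rightarrow$(i) I argue by contraposition: assuming $\mu$ is not a Dirac mass I exhibit one geodesic issued from $\mu$ that cannot be extended. Fix any point $x_0$, put $T=\dwp(\mu,\delta_{x_0})>0$, and let $\gamma(t)=\ler{D_{x_0}^{1-t/T}}_\#\mu$ on $[0,T]$; this is the (unique) geodesic from $\mu$ to $\gamma(T)=\delta_{x_0}$. Suppose it extended to a geodesic on $[0,L]$ with $L>T$ and set $\sigma=\gamma(L)$. By Lemma~\ref{lem:geod} there is an optimal $\pi\in\Pi^0(\mu,\sigma)$ with $\gamma(t)=\ler{g_{t/L}}_\#\pi$, and the condition $\gamma(T)=\delta_{x_0}$ reads $(1-T/L)x+(T/L)z=x_0$ for $\pi$-almost every $(x,z)$. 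Since $0<T/L<1$ this determines $z=x_0+c(x-x_0)$ with $c=1-L/T<0$, so $\pi=\ler{\id\times D_{x_0}^{c}}_\#\mu$ is the negative-dilation (reflection) coupling; it remains only to show this $\pi$ is not optimal.

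This non-optimality is the heart of the argument and the only place strict convexity (hence $p>1$) is used. Choosing two distinct points $a,b\in\supp\mu$ and writing $u=a-x_0$, $v=b-x_0$, the displacements realised by $\pi$ on these two points are $(1-c)u$ and $(1-c)v$, whereas interchanging the two targets produces displacements $u-cv$ and $v-cu$. Both pairings have the same total $(1-c)(u+v)$, but the difference of the two displacement vectors is $(1-c)(u-v)$ for $\pi$ and $(1+c)(u-v)$ for the swapped plan, and $|1+c|<1-c$ for every $c<0$; thus the swapped pairing is strictly less spread, and since $w\mapsto\norm{\tfrac12(1-c)(u+v)+w}^p+\norm{\tfrac12(1-c)(u+v)-w}^p$ is a symmetric convex function that is strictly increasing along rays for $p>1$, the swap strictly lowers the cost. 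As $a\ne b$ this strict gain survives on neighbourhoods of positive $\mu$-measure, so $\pi$ cannot be optimal, contradicting Lemma~\ref{lem:geod}; hence $\gamma$ is non-extendable and $\mu$ violates (ii). The step requiring the most care is exactly this last one: converting the pointwise rearrangement inequality into a genuine, positive-measure reduction of the transport cost.
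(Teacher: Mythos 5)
Your proof is correct, and both directions follow the same skeleton as the paper's: (i)$\Rightarrow$(ii) via the uniqueness of the coupling out of a Dirac mass, and (ii)$\Rightarrow$(i) by contracting $\mu$ onto $\delta_{x_0}$, extending past the Dirac mass, reading off from Lemma~\ref{lem:geod} that the negative-dilation coupling $(\id\times D_{x_0}^{c})_\#\mu$ must then be optimal, and refuting its optimality. Where you genuinely diverge is in that last refutation. The paper only rules out optimality when $\supp(\mu)$ is \emph{not} collinear with $x_0$ (this is the content of its Figure~\ref{fig:better}), concludes that $\supp(\mu)$ lies on a line through $x_0$, and then must let $x_0$ vary over $E$ and invoke $\dim E\geq 2$ to shrink the support to a point. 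Your two-point swap — comparing the displacement pairs via the common half-sum $m$ and the half-differences $\tfrac{1-c}{2}(u-v)$ versus $\tfrac{|1+c|}{2}(u-v)$, and using that $\|m+w\|^p+\|m-w\|^p$ is even and strictly convex along the ray through $u-v$ for $p>1$ — kills optimality for \emph{any} two distinct points of $\supp(\mu)$, collinear with $x_0$ or not. This buys you a single fixed $x_0$, no case distinction, and in fact no use of $\dim E\geq 2$ in this direction (so your argument also covers $\dim E=1$, which the paper handles elsewhere by other means). The one step you state somewhat informally — upgrading the pointwise strict inequality at $(a,b)$ to a genuine cost reduction — is fine as written via continuity on small balls of positive $\mu$-measure, and could alternatively be phrased as a violation of $c$-cyclical monotonicity of $\supp(\pi)$, exactly as the paper does in the proof of Theorem~\ref{thm:marad}.
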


\noindent The following is a metric characterisation of Dirac measures for the $p=1$, $\dim E \geq 2$ case. 

\begin{lemma}\label{lem:p=1Dirac}
Let $E$ be a real separable Hilbert space such that $\dim E \geq 2$. For a measure $\mu\in\Wo(E)$ the following statements are equivalent:
\begin{itemize}
    \item[(i)] $\mu$ is a Dirac measure,
    \item[(ii)] for all $\nu\in\Wo(E),\nu\neq\mu$ there exists an $\eta\in\Wo(E)$ such that 
	\begin{equation}\label{eq:W1-Dirac}
	\dwo(\mu,\nu) = \dwo(\nu,\eta) = \frac{1}{2}\dwo(\mu,\eta).
	\end{equation}
\end{itemize}
\end{lemma}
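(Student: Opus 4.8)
The plan is to prove the two implications separately: the forward direction $(i)\Rightarrow(ii)$ is a short explicit construction, whereas the reverse direction carries all the weight and is where strict convexity of the Hilbert norm enters. For $(i)\Rightarrow(ii)$, suppose $\mu=\delta_x$ and let $\nu\neq\delta_x$ be arbitrary. The natural candidate for $\eta$ is the reflection of $\mu$ through $\nu$ with centre $x$, namely $\eta:=(D_x^2)_\#\nu$, where $D_x^2(y)=2y-x$. Writing $A:=\dwo(\delta_x,\nu)=\int_E\norm{x-y}\dd\nu(y)$ (the value is exact because $\delta_x\otimes\nu$ is the only coupling), I would compute $\dwo(\delta_x,\eta)=\int_E\norm{x-(2y-x)}\dd\nu(y)=2A$, while the coupling $y\mapsto 2y-x$ gives $\dwo(\nu,\eta)\leq A$; the matching lower bound $\dwo(\nu,\eta)\geq\dwo(\delta_x,\eta)-\dwo(\delta_x,\nu)=A$ is the triangle inequality. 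Hence all three quantities in \eqref{eq:W1-Dirac} agree, and this step is routine.

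The substance lies in $(ii)\Rightarrow(i)$, which I would establish by contraposition. Assume $\mu$ is not a Dirac measure, so $\supp(\mu)$ contains two distinct points $y_1\neq y_2$; using $\dim E\geq 2$, I would pick a point $z$ that does \emph{not} lie on the line through $y_1$ and $y_2$ and set $\nu:=\delta_z\neq\mu$. The claim is that no $\eta$ satisfies \eqref{eq:W1-Dirac} for this $\nu$. Suppose one did, and put $A:=\dwo(\mu,\delta_z)>0$, so that $\dwo(\delta_z,\eta)=A$ and $\dwo(\mu,\eta)=2A$. The key tool is Kantorovich duality: I would take an optimal $1$-Lipschitz potential $f$ for the pair $(\mu,\eta)$, with $\int f\dd\mu-\int f\dd\eta=2A$. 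Splitting this as $\bigl(\int f\dd\mu-f(z)\bigr)+\bigl(f(z)-\int f\dd\eta\bigr)$ and noting that each summand is dominated by the corresponding distance $A$, I conclude that both inequalities are equalities. Since $f$ is $1$-Lipschitz while $\int\bigl(f(y)-f(z)\bigr)\dd\mu(y)=\int\norm{y-z}\dd\mu(y)$, this saturation forces $f(y)=f(z)+\norm{y-z}$ for every $y\in\supp(\mu)$, and symmetrically $f(w)=f(z)-\norm{w-z}$ for every $w\in\supp(\eta)$.

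From here the contradiction is purely geometric. Choosing any $w_0\in\supp(\eta)$ with $w_0\neq z$ (available since $\dwo(\delta_z,\eta)=A>0$), the $1$-Lipschitz bound on the pairs $(y_i,w_0)$ gives $\norm{y_i-z}+\norm{w_0-z}=f(y_i)-f(w_0)\leq\norm{y_i-w_0}$ for $i=1,2$, which together with the triangle inequality in $E$ yields the equality $\norm{y_i-w_0}=\norm{y_i-z}+\norm{z-w_0}$. Here I would invoke strict convexity of the Hilbert norm: such an equality forces $y_i-z$ to be a non-negative multiple of $z-w_0$, i.e.\ $z$ lies on the segment $[y_i,w_0]$, for both $i$. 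Consequently $y_1-z$ and $y_2-z$ are both positive multiples of the nonzero vector $z-w_0$, so $y_1,y_2,z$ are collinear, contradicting the choice of $z$. This proves $\neg(i)\Rightarrow\neg(ii)$, hence $(ii)\Rightarrow(i)$.

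I expect the main obstacle to be precisely this reverse direction, and within it the extraction of rigid pointwise identities for the optimal potential $f$ from the saturated triangle inequality. The decisive ingredient is strict convexity of the Hilbert norm, which converts mere metric betweenness of $z$ into genuine collinearity of $y_1,y_2,z$ and thereby clashes with the freedom (guaranteed by $\dim E\geq 2$) to place $z$ off the line through two support points of $\mu$.
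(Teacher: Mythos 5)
Your proof is correct, but both halves take a route different from the paper's. For (i)$\Rightarrow$(ii) you use the same candidate $\eta=(D_x^2)_\#\nu$, but you verify the three identities directly: the couplings with a Dirac endpoint are unique, the map $y\mapsto 2y-x$ gives the upper bound $\dwo(\nu,\eta)\leq A$, and the triangle inequality supplies the matching lower bound. The paper instead approximates $\nu$ by finitely supported measures, applies the geodesic lemma for $p>1$, and passes to the limit $p\to 1+$; your computation is shorter and avoids that detour entirely. For (ii)$\Rightarrow$(i) both arguments pick $\nu=\delta_z$ with $z$ off the line through two support points of $\mu$, and both ultimately rest on the fact that equality in the Hilbert-space triangle inequality forces collinearity; the difference is that the paper works on the primal side -- gluing the optimal plans $\mu\to\delta_z\to\eta$ and arguing that saturation of the triangle inequality makes the composite plan optimal, so that $z$ must lie on every segment joining $\supp(\mu)$ to $\supp(\eta)$ -- whereas you work on the dual side, extracting a Kantorovich potential $f$ for $(\mu,\eta)$ whose Lipschitz bound is saturated along $\supp(\mu)\to z\to\supp(\eta)$. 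Your route requires the existence of an optimal $1$-Lipschitz potential for $W_1$ on a Polish space (standard, but worth a citation, e.g.\ to the Kantorovich--Rubinstein duality with attained supremum); in exchange it replaces the paper's rather terse ``this transport plan is optimal, hence every connecting line contains $x$'' step with explicit pointwise identities, and it makes the role of strict convexity of the norm fully visible.
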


As a consequence of Lemmas \ref{lem:p>1Dirac} and \ref{lem:p=1Dirac} we obtain that the action of an isometry on $\Delta(E)$ is induced by an isometry of $E$. This is a straightforward consequence of the metric characterization of Dirac masses and that $\dwp(\delta_x,\delta_y)=\|x-y\|$ holds for all $x,y\in E$, $1\leq p<\infty$.
\begin{corollary}\label{cor:Dirac}
Let $E$ be a separable real Hilbert space and let $1\leq p<\infty$ be fixed. For any isometry $\Phi\colon\Wp(E)\to\Wp(E)$ there exists a $\psi\in\isom(E)$ such that $\Phi(\delta_x) = \delta_{\psi(x)}$ for all $x\in E$.
\end{corollary}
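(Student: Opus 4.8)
The plan is to observe that in both ranges of $p$ the defining property of a Dirac measure, as isolated by Lemmas \ref{lem:p>1Dirac} and \ref{lem:p=1Dirac}, is expressed purely through the metric structure of $\Wp(E)$ (distances and geodesics, the latter being themselves a metric notion), and is therefore preserved by every isometry. I would work under the hypothesis $\dim E\geq 2$ of those lemmas, which is also the setting relevant for the Main Theorem; the remaining cases $\dim E\leq 1$ are either trivial ($E=\{0\}$) or reduce to the real-line results of \cite{GTV}.

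For $1<p<\infty$, Lemma \ref{lem:p>1Dirac} characterises $\mu\in\Delta(E)$ by the requirement that every geodesic segment issued from $\mu$ extend to a geodesic ray. Since $\Phi$ is a surjective isometry of $\Wp(E)$, it sends geodesic segments to geodesic segments and rays to rays while preserving the relation of being an extension; moreover, as $\Phi^{-1}$ is again an isometry, every geodesic segment issued from $\Phi(\mu)$ is the $\Phi$-image of one issued from $\mu$. Consequently the extendability property holds for $\mu$ if and only if it holds for $\Phi(\mu)$, so $\mu\in\Delta(E)\iff\Phi(\mu)\in\Delta(E)$. For $p=1$ I would argue in the same spirit from Lemma \ref{lem:p=1Dirac}: the condition \eqref{eq:W1-Dirac} is stated solely in terms of the Wasserstein distance, and applying $\Phi$ (using $\Phi^{-1}$ to produce the witness $\eta$ on the image side) shows that it passes between $\mu$ and $\Phi(\mu)$. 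In either case $\Phi$ restricts to a bijection of $\Delta(E)$ onto itself.

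Once $\Phi\big(\Delta(E)\big)=\Delta(E)$ is established, I would invoke \eqref{E:hatas} to obtain a well-defined map $\psi\colon E\to E$ with $\Phi(\delta_x)=\delta_{\psi(x)}$; it is a bijection because $\Phi$ restricts to a bijection of $\Delta(E)$ and $x\mapsto\delta_x$ is injective. Since $p\geq 1$ gives $\dwp(\delta_x,\delta_y)=\norm{x-y}$ for all $x,y\in E$, the fact that $\Phi$ preserves $\dwp$ transfers at once to $\psi$, yielding $\norm{\psi(x)-\psi(y)}=\norm{x-y}$; thus $\psi$ is a distance-preserving bijection of $E$, i.e.\ $\psi\in\isom(E)$.

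I do not anticipate a genuine obstacle: the corollary is a formal consequence of the two metric characterisations. The only point demanding a little care is the surjective (``onto'') direction in the isometry-invariance of the Dirac property, where one must use that $\Phi$ is bijective — so that $\Phi^{-1}$ is also an isometry — in order to transport geodesic segments (respectively the witness $\eta$) issued on the $\Phi(\mu)$-side back to the $\mu$-side.
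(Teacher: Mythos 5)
Your proposal is correct and follows essentially the same route as the paper, which likewise deduces the corollary directly from the metric characterisations of Dirac measures in Lemmas \ref{lem:p>1Dirac} and \ref{lem:p=1Dirac} (both invariant under surjective isometries) together with the identity $\dwp(\delta_x,\delta_y)=\norm{x-y}$. Your explicit remarks on using $\Phi^{-1}$ for the ``onto'' direction and on the low-dimensional cases $\dim E\leq 1$ are sensible elaborations of details the paper leaves implicit.
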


\subsection{The case of $1\leq p<\infty$, $2\nmid p$ -- recovery of measures from their potentials}\label{ss:recovery}

Our goal in this subsection is to recover certain properties of the measure $\mu\in\Wp(E)$ from the following \emph{potential function}:
\begin{equation}\label{eq:pot}
    \potmu\colon E\to \R, \quad x\mapsto \dwpp(\mu,\delta_x) = \int_E \norm{x-y}^p \dd\mu(y).
\end{equation}
We shall do that by showing the following identity for all $\mu\in\Wp(E)$ and $\x \in E$:
\begin{align}
&\lim_{h \to \0}\frac{\sum_{j=0}^{2k} \binom{2k}{j}(-1)^j\potmu\ler{\x+(k-j)\h}}{\ler{\sum_{j=0}^{2k} \binom{2k}{j}(-1)^j |k-j|^p }\norm{\h}^p} = \mu(\{x\}),  \label{eq:atom}
\end{align}
where we set $k:=\lceil p/2 \rceil$.
Note that this does not hold for even positive integers $p$, since in that case the potential function itself does not contain enough information to fully identify even a finitely supported probability measure (see Subsections \ref{ss:2}--\ref{ss:468} for the details). In the next two statements we prove that the denominator of \eqref{eq:atom} is not zero.

\begin{lemma}\label{lem:alternant}
Let $m\in\N$, $m\geq 1$ and $\{\lambda_\ell\}_{\ell=1}^m\subset\R$ not all of them zero. Furthermore, let $\{a_\ell\}_{\ell=1}^m$ be a set of $m$ pairwise different positive numbers. Then the function
\begin{equation}\label{eq:alternant}
t \mapsto \sum_{\ell=1}^{m} \lambda_\ell a_\ell^t
\end{equation}
has at most $m-1$ zeros.
\end{lemma}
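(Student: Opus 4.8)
The plan is to prove this by induction on $m$, the number of terms, exploiting the fact that the functions $t\mapsto a_\ell^t$ are exponentials $e^{t\ln a_\ell}$ with pairwise distinct exponents $\ln a_\ell$ (since the $a_\ell$ are positive and distinct). The base case $m=1$ is immediate: a single nonzero term $\lambda_1 a_1^t$ never vanishes, so it has $0=m-1$ zeros.

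For the inductive step, I would assume the claim for $m-1$ terms and consider a function $f(t)=\sum_{\ell=1}^m \lambda_\ell a_\ell^t$ with not all $\lambda_\ell$ zero. Without loss of generality we may assume every $\lambda_\ell\neq 0$ (dropping the zero coefficients only decreases the number of terms, and the inductive hypothesis already covers fewer terms). The key trick is to factor out one exponential: writing
\begin{equation*}
f(t)=a_m^t\,g(t),\qquad g(t)=\lambda_m+\sum_{\ell=1}^{m-1}\lambda_\ell\ler{\tfrac{a_\ell}{a_m}}^t,
\end{equation*}
we see that since $a_m^t>0$ for all $t$, the zeros of $f$ coincide exactly with the zeros of $g$. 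Now I differentiate: because the constant $\lambda_m$ disappears under differentiation,
\begin{equation*}
g'(t)=\sum_{\ell=1}^{m-1}\lambda_\ell\ln\!\ler{\tfrac{a_\ell}{a_m}}\ler{\tfrac{a_\ell}{a_m}}^t
\end{equation*}
is again a function of the same exponential type, but now with only $m-1$ terms. The new coefficients $\lambda_\ell\ln(a_\ell/a_m)$ are nonzero (because $\lambda_\ell\neq 0$ and $a_\ell\neq a_m$ for $\ell<m$), and the bases $a_\ell/a_m$ are still pairwise distinct and positive, so the inductive hypothesis applies to $g'$ and yields at most $m-2$ zeros for $g'$.

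To conclude I would invoke Rolle's theorem: between any two consecutive zeros of $g$ there must lie a zero of $g'$. Hence if $g$ had $m$ or more zeros, $g'$ would have at least $m-1$ zeros, contradicting the bound of $m-2$ just obtained. Therefore $g$, and consequently $f$, has at most $m-1$ zeros, completing the induction. The only point requiring a little care is ensuring the differentiated coefficients remain nonzero so that the inductive hypothesis genuinely applies to a function with exactly $m-1$ nonvanishing terms; this is exactly where the hypotheses that the $a_\ell$ are \emph{distinct} and positive are used, and it is the crux of the argument.
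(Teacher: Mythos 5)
Your proof is correct and follows essentially the same route as the paper's: reduce to the case where all coefficients are nonzero, normalise so that one term becomes a constant, differentiate to obtain an exponential sum with $m-1$ nonvanishing terms, and apply the inductive hypothesis together with Rolle's theorem. The only difference is cosmetic --- the paper divides by $\lambda_m a_m^t$ rather than just factoring out $a_m^t$, and leaves the Rolle step implicit, which you spell out.
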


\begin{proof}
	Without loss of generality we can assume that all the $\lambda_\ell$'s are non-zero.
	For $m=1$ the statement is obvious. Assume we already proved it for $m-1$ where $m\geq 2$. 
	The function in \eqref{eq:alternant} and
	\begin{equation}\label{eq:a2}
	t \mapsto 1+\sum_{\ell=1}^{m-1} \frac{\lambda_\ell}{\lambda_m} \ler{\frac{a_\ell}{a_m}}^t
	\end{equation}
	have exactly the same zeros.
	The derivative of the latter is
	$$
	t \mapsto \sum_{\ell=1}^{m-1} \log\ler{\frac{a_\ell}{a_m}} \frac{\lambda_\ell}{\lambda_m} \ler{\frac{a_\ell}{a_m}}^t.
	$$
	By our hypothesis, this derivative has at most $m-2$ zeros, which completes the proof.
\end{proof}

\begin{corollary}\label{cor:nonzdiv}
Let $0<p<\infty, 2 \nmid p$ and $k=\lceil p/2 \rceil$. Then we have
$$\sum_{j=0}^{k-1} \binom{2k}{j}(-1)^j (k-j)^p \neq 0.$$
\end{corollary}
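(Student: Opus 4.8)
The plan is to regard the quantity in question as the value at exponent $t=p$ of the single-variable function
\begin{equation*}
f(t) = \sum_{j=0}^{k-1} \binom{2k}{j}(-1)^j (k-j)^t ,
\end{equation*}
and to show that $f$ cannot vanish at $t=p$. As $j$ runs from $0$ to $k-1$, the bases $k-j$ run through the $k$ pairwise distinct positive integers $1,2,\dots,k$, and the coefficients $\binom{2k}{j}(-1)^j$ are all non-zero. Thus $f$ is precisely of the form treated in Lemma \ref{lem:alternant} with $m=k$, and that lemma tells us that $f$ has at most $k-1$ zeros. The strategy is therefore to exhibit $k-1$ explicit zeros of $f$: once we have done so, these must be \emph{all} the zeros, and it will only remain to check that $p$ is not among them.

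To locate the zeros I would first symmetrise. Since $\binom{2k}{j}=\binom{2k}{2k-j}$, $(-1)^j=(-1)^{2k-j}$ and $|k-(2k-j)|=|k-j|$, the substitution $j\mapsto 2k-j$ pairs up the terms, while the middle index $j=k$ contributes $0^t=0$ for every $t>0$. Consequently, for $t>0$,
\begin{equation*}
2\,f(t) = \sum_{j=0}^{2k} \binom{2k}{j}(-1)^j |k-j|^t .
\end{equation*}
Now restrict to an even integer exponent $t\in\{2,4,\dots,2(k-1)\}$. For such $t$ the absolute value is harmless, $|k-j|^t=(k-j)^t$, so the right-hand side equals $\sum_{j=0}^{2k}\binom{2k}{j}(-1)^j (k-j)^t$. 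Here $j\mapsto (k-j)^t$ is a polynomial in $j$ of degree $t$, and $\sum_{j=0}^{n}\binom{n}{j}(-1)^j Q(j)$ is, up to sign, the $n$-th iterated finite difference of $Q$, which vanishes whenever $\deg Q < n$. Since $t<2k$, this forces $f(t)=0$. This produces the $k-1$ distinct zeros $2,4,\dots,2(k-1)$.

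Combining the two steps closes the argument: $f$ has at most $k-1$ zeros by Lemma \ref{lem:alternant}, and $2,4,\dots,2(k-1)$ already account for $k-1$ of them, so these are exactly the zeros of $f$ (in the degenerate case $k=1$ this set is empty and $f\equiv 1$). Because $p$ is assumed not to be an even integer, $p\notin\{2,4,\dots,2(k-1)\}$; indeed $k=\lceil p/2\rceil$ together with $p$ not even places $p$ in the open interval $(2(k-1),2k)$, to the right of every zero found above. Hence $f(p)\neq 0$, which is the claimed inequality. The only genuinely non-routine point is the middle step: recognising, after symmetrisation, that the even-exponent values of the sum are iterated finite differences of a low-degree polynomial and therefore vanish. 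I would regard pinning down that \emph{exactly} $k-1$ zeros occur — so that the ``at most $k-1$'' bound of Lemma \ref{lem:alternant} is saturated and no further zero can hide at $t=p$ — as the crux; everything else is bookkeeping.
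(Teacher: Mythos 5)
Your proposal is correct and follows essentially the same route as the paper: both invoke Lemma \ref{lem:alternant} to cap the number of zeros of $t\mapsto\sum_{j=0}^{k-1}\binom{2k}{j}(-1)^j(k-j)^t$ at $k-1$ and then exhibit the even integers $2,4,\dots,2(k-1)$ as exactly those zeros, so that $p\in(2(k-1),2k)$ cannot be one of them. The only divergence is in verifying the identity $\sum_{j=0}^{2k}\binom{2k}{j}(-1)^j(k-j)^{2m}=0$ for $m<k$: the paper differentiates $\ler{\sin(\pi t)}^{2k}$ at $t=0$ (see \eqref{eq:comb}), whereas you observe that the alternating binomial sum is a $2k$-th iterated finite difference of a polynomial of degree $2m<2k$ --- an equally valid and arguably more elementary computation.
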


\begin{proof}
For $k=1$, that is when $0<p<2$, the value of the sum is $1$. We assume from now on that $k\geq 2$. For a fixed such $k\in\N$ consider the function 
$$\R\to\R, \;\;\; t\mapsto \sum_{j=0}^{k-1} \binom{2k}{j}(-1)^j (k-j)^t$$
which has at most $k-1$ zeros by Lemma \ref{lem:alternant}. 
However, we claim that $t=2m$ is a zero for every $m\in\N$, $1\leq m <k$. 
Indeed, since $\ler{\sin (\pi t)}^{2k}=\mathcal{O}\ler{t^{2k}}$ as $t\to 0$, we calculate
\begin{align}
0 & 
= \frac{\dd^{2m}}{\dt^{2m}} \ler{\ler{\frac{1}{2i}\ler{e^{\pi i t}-e^{-\pi i t}}}^{2k}}\Bigg|_{t=0} =\frac{\dd^{2m}}{\dt^{2m}}\ler{ \ler{-\frac{1}{4}}^k \sum_{j=0}^{2k}\binom{2k}{j}(-1)^j e^{2 \pi i (k-j)t} }\Bigg|_{t=0} \nonumber\\
&= \frac{(2 \pi i)^{2m}}{(-4)^k} \sum_{j=0}^{2k}\binom{2k}{j}(-1)^j (k-j)^{2m}
= 2\frac{(2 \pi i)^{2m}}{(-4)^k} \sum_{j=0}^{k-1}\binom{2k}{j}(-1)^j (k-j)^{2m}. \label{eq:comb}
\end{align}
Hence the proof is done.
\end{proof}

As a next step in proving \eqref{eq:atom}, in the following lemma we construct a family of functions whose pointwise limit is the characteristic function of the origin. 
\begin{lemma}\label{lem:peak}
Let $E$ be a separable real Hilbert space, $0<p<\infty$, $2 \nmid p$, and $k=\lceil p/2 \rceil$. Then for all $\x \in E$ we have the following formula:
\begin{align}
&\lim_{h \to \0}\frac{ \sum_{j=0}^{2k} \binom{2k}{j}(-1)^j\norm{\x+(k-j)\h}^p}{\ler{\sum_{j=0}^{2k} \binom{2k}{j}(-1)^j |k-j|^p} \norm{\h}^p}=\bigchi_{\{\0\}}(x).  \label{eq:peak}
\end{align}
\end{lemma}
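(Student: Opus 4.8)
The plan is to separate the two cases $x=\0$ and $x\neq\0$: the first is a one-line computation, and the second rests on recognizing the numerator as a $2k$-th finite difference and estimating its size. Before starting I would record that the denominator is genuinely nonzero. Writing $c:=\sum_{j=0}^{2k}\binom{2k}{j}(-1)^j|k-j|^p$, the substitution $j\mapsto 2k-j$ fixes each summand (since $\binom{2k}{j}=\binom{2k}{2k-j}$, $(-1)^{2k-j}=(-1)^j$ and $|k-(2k-j)|=|k-j|$) while the middle term $j=k$ vanishes, so $c=2\sum_{j=0}^{k-1}\binom{2k}{j}(-1)^j(k-j)^p$, which is nonzero by Corollary \ref{cor:nonzdiv}. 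When $x=\0$ one has $\norm{\x+(k-j)\h}^p=|k-j|^p\norm{\h}^p$ for every $j$, so the numerator equals $c\,\norm{\h}^p$ and the quotient is identically $1=\bigchi_{\{\0\}}(\0)$.

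The substantive case is $x\neq\0$. Fixing such an $x$ and a direction $h$, I would set $g(s):=\norm{\x+s\h}^p=q(s)^{p/2}$, where $q(s)=\norm{\x}^2+2s\inner{\x}{\h}+s^2\norm{\h}^2$ is a quadratic polynomial in the real variable $s$. For $\norm{\h}\leq \norm{\x}/(2k)$ one gets $\norm{\x+s\h}\geq \norm{\x}/2>0$ for all $s\in[-k,k]$, hence $q>0$ and $g$ is smooth on a neighbourhood of $[-k,k]$. The numerator in \eqref{eq:peak} is then exactly the $2k$-th central finite difference $\sum_{j=0}^{2k}\binom{2k}{j}(-1)^j g(k-j)$ of $g$. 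Taylor-expanding $g$ about $s=0$ to order $2k$ and using the standard annihilation identity $\sum_{j=0}^{2k}\binom{2k}{j}(-1)^j P(j)=0$ for every polynomial $P$ of degree below $2k$ (here $P(j)=(k-j)^m$ with $m\leq 2k-1$) kills the Taylor polynomial, leaving only the Lagrange remainder, which is bounded by a constant times $\sup_{s\in[-k,k]}|g^{(2k)}(s)|$.

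The key estimate is that $g^{(2k)}(s)=O\ler{\norm{\h}^{2k}}$ uniformly on $[-k,k]$ as $h\to\0$. This I would obtain from the chain rule: by Fa\`a di Bruno, $g^{(2k)}$ is a finite sum of terms $c\, q^{p/2-r}\prod_{l} q^{(i_l)}$ with $\sum_l i_l=2k$, and since $q^{(1)}(s)=2\inner{\x}{\h}+2s\norm{\h}^2=O\ler{\norm{\h}}$ by Cauchy--Schwarz, $q^{(2)}=2\norm{\h}^2=O\ler{\norm{\h}^2}$, and $q^{(i)}=0$ for $i\geq 3$, while $q^{p/2-r}$ stays bounded on $[-k,k]$ (as $q$ is bounded above and away from $0$ there), each surviving product is $O\ler{\norm{\h}^{2k}}$. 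Consequently the numerator is $O\ler{\norm{\h}^{2k}}$ and the whole quotient is $O\ler{\norm{\h}^{2k-p}}$. Because $2\nmid p$ we have $k=\lceil p/2\rceil>p/2$, so $2k-p>0$ and the quotient tends to $0=\bigchi_{\{\0\}}(x)$, completing the argument.

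The main obstacle I expect is precisely this derivative bound together with its uniformity in $s$: keeping $q$ bounded away from $0$ so that $g$ remains smooth and the powers $q^{p/2-r}$ stay controlled is what forces the restriction $\norm{\h}\leq\norm{\x}/(2k)$ and must be tracked carefully. Everything else—the vanishing of the Taylor polynomial under the finite difference, and the final comparison $2k>p$—is routine once this uniform estimate is in place.
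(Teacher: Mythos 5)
Your proof is correct, but it follows a genuinely different and more streamlined route than the paper's. The paper writes each term as $\norm{\x}^p(1+u_j)^{p/2}$ with $u_j=\pm\tfrac{2(k-j)\inner{\x}{\h}}{\norm{\x}^2}+\tfrac{(k-j)^2\norm{\h}^2}{\norm{\x}^2}$, expands via the absolutely convergent binomial series, bounds the tail of the resulting double series by $\mathcal{O}(\norm{\h}^{2k})$ by hand, and cancels the surviving low-order terms using the identity $\sum_{j=0}^{k-1}\binom{2k}{j}(-1)^j(k-j)^{2m}=0$ for $1\le m\le k-1$ obtained from differentiating $\sin^{2k}(\pi t)$ (the odd powers of $(k-j)$ cancel by the $j\mapsto 2k-j$ symmetry). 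You instead recognise the numerator as the $2k$-th central finite difference of the single-variable function $g(s)=q(s)^{p/2}$ with $q(s)=\norm{\x+s\h}^2$ quadratic in $s$, annihilate the degree-$(2k-1)$ Taylor polynomial by the standard identity $\sum_{j=0}^{2k}\binom{2k}{j}(-1)^jP(j)=0$ for $\deg P<2k$, and control the Lagrange remainder through a Fa\`a di Bruno bound on $g^{(2k)}$: since $q'=\mathcal{O}(\norm{\h})$, $q''=\mathcal{O}(\norm{\h}^2)$, $q^{(i)}=0$ for $i\ge 3$, and the block sizes in each surviving partition sum to $2k$, every term is $\mathcal{O}(\norm{\h}^{2k})$ once $q$ is kept bounded away from $0$ on $[-k,k]$ via $\norm{\h}\le\norm{\x}/(2k)$. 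Both arguments end with the same observation that $2k>p$ because $p$ is not an even integer, and both use Corollary \ref{cor:nonzdiv} to see that the denominator is nonzero (your symmetrisation $c=2\sum_{j=0}^{k-1}\binom{2k}{j}(-1)^j(k-j)^p$ is exactly right). Your version trades the paper's explicit series bookkeeping for Taylor/Fa\`a di Bruno machinery; the homogeneity count is arguably cleaner and less error-prone, while the paper's computation is more self-contained.
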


\begin{proof} In case when $\x=\0$, we easily see that the value of the limit in \eqref{eq:peak} is $1$.
Assume from now on that $\x \neq \0$. The numerator of \eqref{eq:peak} is
\begin{align*}
\binom{2k}{k} &(-1)^k \norm{\x}^p+\sum_{j=0}^{k-1} \binom{2k}{j}(-1)^j\ler{\norm{\x+(k-j)\h}^p+\norm{\x-(k-j)\h}^p}\\ 
&= \binom{2k}{k} (-1)^k \norm{\x}^p
+\sum_{j=0}^{k-1} \binom{2k}{j}(-1)^j\Bigg(\ler{\norm{\x}^2+2(k-j)\inner{\x}{\h}+(k-j)^2\norm{\h}^2}^\frac{p}{2}+\\
& \hspace{6.3cm}+\ler{\norm{\x}^2-2(k-j)\inner{\x}{\h}+(k-j)^2\norm{\h}^2}^\frac{p}{2}\Bigg) \\
& =
\norm{\x}^p
\Bigg[ \binom{2k}{k} (-1)^k
+\sum_{j=0}^{k-1} \binom{2k}{j}(-1)^j \bigg\{
\ler{1+\tfrac{2(k-j)\inner{\x}{\h}}{\norm{\x}^2}+\tfrac{(k-j)^2\norm{\h}^2}{\norm{\x}^2}}^\frac{p}{2}+ \\
& \hspace{6.3cm}+\ler{1-\tfrac{2(k-j)\inner{\x}{\h}}{\norm{\x}^2}+\tfrac{(k-j)^2\norm{\h}^2}{\norm{\x}^2}}^\frac{p}{2} \bigg\} \Bigg].
\end{align*}
It is easy to check that if $\norm{h} < \frac{\norm{x}}{5k}$, then $\left|\pm\tfrac{2(k-j)\inner{\x}{\h}}{\norm{\x}^2}+\tfrac{(k-j)^2\norm{\h}^2}{\norm{\x}^2}\right| < \frac{1}{2}$. For such vectors $h$ we compute the numerator of \eqref{eq:peak} further, using the absolute convergence of the binomial series:
\begin{align}
&\norm{\x}^p
\Bigg[
\binom{2k}{k} (-1)^k+\sum_{j=0}^{k-1} \binom{2k}{j}(-1)^j 
\Bigg\{
\sum_{l=0}^\infty \binom{\frac{p}{2}}{l}
\bigg(
\ler{\tfrac{2(k-j)\inner{\x}{\h}}{\norm{\x}^2}+\tfrac{(k-j)^2\norm{\h}^2}{\norm{\x}^2}}^l+ \nonumber \\
& \hspace{6.2cm}+\ler{-\tfrac{2(k-j)\inner{\x}{\h}}{\norm{\x}^2}+\tfrac{(k-j)^2\norm{\h}^2}{\norm{\x}^2}}^l
\bigg)
\Bigg\}
\Bigg] \label{eq:curly}
\end{align}
If we apply the binomial theorem for the $l$--powers in between the curly brackets above, then we obtain two double series. We estimate them in the following way:
\begin{align*}
&\sum_{l=0}^\infty \sum_{i=0}^{l} \left|\binom{\frac{p}{2}}{l} \binom{l}{i}\ler{\pm\tfrac{2(k-j)\inner{\x}{\h}}{\norm{\x}^2}}^{i}\ler{\tfrac{(k-j)^2\norm{\h}^2}{\norm{\x}^2}}^{l-i}\right| \\
&\leq\lesq{\sum_{l=0}^{2k-1} \sum_{\substack{i=0 \\ 2l-i \leq 2k-1}}^{l} + \sum_{l=0}^{2k-1} \sum_{\substack{i=0 \\ 2l-i > 2k-1}}^{l} + \sum_{l=2k}^\infty \sum_{i=0}^{l}} \left|\binom{\frac{p}{2}}{l}\right| \binom{l}{i} 2^i(k-j)^{2l-i}
\ler{\tfrac{\norm{\h}}{\norm{\x}}}^{2l-i}\\
&\leq \sum_{l=0}^{2k-1} \sum_{\substack{i=0 \\ 2l-i \leq 2k-1}}^{l} \left|\binom{\frac{p}{2}}{l}\right| \binom{l}{i} 2^i(k-j)^{2l-i}
\ler{\tfrac{\norm{\h}}{\norm{\x}}}^{2l-i} + \mathcal{O}\ler{\norm{h}^{2k}}+\\
&\qquad\qquad\qquad\qquad\qquad\qquad + \ler{\tfrac{\norm{\h}}{\norm{\x}}}^{2k} \sum_{l=2k}^\infty \sum_{i=0}^{l} \left|\binom{\frac{p}{2}}{l}\right| \binom{l}{i} (2k)^{2l-i} \ler{\tfrac{\norm{\h}}{\norm{\x}}}^{2l-i-2k}.
\end{align*}
where the error term is uniform in $h$ for $\norm{h} < \frac{\norm{x}}{5k}$.  Since the last summand can be further estimated from above by
\begin{align*}
\ler{\tfrac{\norm{\h}}{\norm{\x}}}^{2k} \sum_{l=2k}^\infty \left|\binom{\frac{p}{2}}{l}\right| \ler{5k}^{2k} \sum_{i=0}^{l} \binom{l}{i} \ler{\tfrac{2}{5}}^{2l-i}\leq \ler{\tfrac{\norm{\h}}{\norm{\x}}}^{2k} \ler{5k}^{2k} \sum_{l=2k}^\infty \left|\binom{\frac{p}{2}}{l}\right| 2^l \ler{\tfrac{2}{5}}^{l} = \mathcal{O}\ler{\norm{\h}^{2k}},
\end{align*}
the expression in between the curly brackets in \eqref{eq:curly} is of the form
\begin{align*}
\sum_{l=0}^\infty \binom{\frac{p}{2}}{l}&
\bigg(
\ler{\tfrac{2(k-j)\inner{\x}{\h}}{\norm{\x}^2}+\tfrac{(k-j)^2\norm{\h}^2}{\norm{\x}^2}}^l +\ler{-\tfrac{2(k-j)\inner{\x}{\h}}{\norm{\x}^2}+\tfrac{(k-j)^2\norm{\h}^2}{\norm{\x}^2}}^l
\bigg) \\
&= \sum_{l=0}^{2k-1} \sum_{\substack{i=0 \\ 2l-i \leq 2k-1 \\ i\equiv 0\; (mod \;2)}}^{l} 2\binom{\frac{p}{2}}{l} \binom{l}{i}\ler{\tfrac{2(k-j)\inner{\x}{\h}}{\norm{\x}^2}}^{i}\ler{\tfrac{(k-j)^2\norm{\h}^2}{\norm{\x}^2}}^{l-i} + \mathcal{O}\ler{\norm{\h}^{2k}} \\
&= 2+ \sum_{l=1}^{2k-1} \sum_{\substack{i=0 \\ 2l-i \leq 2k-1 \\ i\equiv 0\; (mod \;2)}}^{l} \binom{\frac{p}{2}}{l} \binom{l}{i} 2^i(k-j)^{2l-i} \ler{\tfrac{\inner{\x}{\h}}{\norm{\x}^2}}^{i}\ler{\tfrac{\norm{\h}^2}{\norm{\x}^2}}^{l-i} + \mathcal{O}\ler{\norm{\h}^{2k}}
\end{align*}
where again the error term is uniform in $h$ for $\norm{h} < \frac{\norm{x}}{5k}$. Note that the latter double sum is equal to zero if $k=1$, in which case one easily sees that  \eqref{eq:peak} holds indeed.
For $k\geq2$ the numerator of \eqref{eq:peak} can be further computed as follows, where we use the identity $\sum_{j=0}^{k-1}\binom{2k}{j}(-1)^j (k-j)^{2m} = 0$ $(m=1,\dots, k-1)$ proved in \eqref{eq:comb}:
\begin{align*}
&\norm{\x}^p
\Bigg[
\binom{2k}{k} (-1)^k+\sum_{j=0}^{k-1} \binom{2k}{j}(-1)^j\times\\
&\qquad\times\Bigg\{
2+\sum_{l=1}^{2k-1} \sum_{\substack{i=0 \\ 2l-i \leq 2k-1 \\ i\equiv 0\; (mod \;2)}}^{l} \binom{\frac{p}{2}}{l} \binom{l}{i} 2^i(k-j)^{2l-i} \ler{\tfrac{\inner{\x}{\h}}{\norm{\x}^2}}^{i}\ler{\tfrac{\norm{\h}^2}{\norm{\x}^2}}^{l-i}+ \mathcal{O}\ler{\norm{\h}^{2k}}
\Bigg\}
\Bigg]
\end{align*}
\begin{align*}
&=\norm{\x}^p
\sum_{j=0}^{k-1} \binom{2k}{j}(-1)^j\times\\
&\qquad\times\Bigg\{ \sum_{l=1}^{2k-1} \sum_{\substack{i=0 \\ 2l-i \leq 2k-2 \\ i\equiv 0\; (mod \;2)}}^{l} (k-j)^{2l-i}\binom{\frac{p}{2}}{l} \binom{l}{i} 2^i \ler{\tfrac{\inner{\x}{\h}}{\norm{\x}^2}}^{i}\ler{\tfrac{\norm{\h}^2}{\norm{\x}^2}}^{l-i} + \mathcal{O}\ler{\norm{\h}^{2k}} \Bigg\}\\
&=\norm{\x}^p
\sum_{j=0}^{k-1} \binom{2k}{j}(-1)^j \mathcal{O}\ler{\norm{\h}^{2k}}
= \mathcal{O}\ler{\norm{\h}^{2k}}.
\end{align*}
This estimation is uniform in $h$ for $\norm{h} < \frac{\norm{x}}{5k}$.
As $2k>p$, the left hand side of \eqref{eq:peak} is $0$ for $\x \neq \0.$
\end{proof}
We point out that in case when $p$ is an even positive integer, then using \eqref{eq:comb} one can calculate that the denominator and numerator in \eqref{eq:peak} actually coincide, hence the limit in \eqref{eq:atom} is 1.

Now, we are in the position to prove the main theorem of this section.

\begin{theorem}\label{thm:noteven}
	Let $E$ be a separable real Hilbert space and $1\leq p<\infty$ such that $p$ is not an even integer.
	Assume that $\Phi\colon\Wp(E)\to\Wp(E)$ is an isometry. Then there exists an (affine) isometry $\psi\in\isom(E)$ such that
	\begin{equation}\label{eq:noteven}
	\Phi(\mu) = \psi_\#\mu \qquad (\mu\in\Wp(E)).
	\end{equation}
\end{theorem}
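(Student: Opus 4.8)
The plan is to show that, up to the isometry of $E$ that $\Phi$ induces on the Dirac masses, $\Phi$ fixes every measure; the engine for this is the atom-recovery formula \eqref{eq:atom}. First I would reduce to the case where $\Phi$ fixes all Diracs. By Corollary \ref{cor:Dirac} there is a $\psi\in\isom(E)$ with $\Phi(\delta_x)=\delta_{\psi(x)}$ for all $x\in E$. Since $\psi^{-1}\in\isom(E)$ and push-forwards by isometries of $E$ are isometries of $\Wp(E)$, the map $\widetilde\Phi:=(\psi^{-1})_\#\circ\Phi$ is an isometry of $\Wp(E)$ satisfying $\widetilde\Phi(\delta_x)=\delta_x$ for every $x$. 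As $\Phi=\psi_\#\circ\widetilde\Phi$ and $\psi$ is automatically affine (Mazur--Ulam), it suffices to prove $\widetilde\Phi=\id$, for then \eqref{eq:noteven} holds with this $\psi$.

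Next I would observe that $\widetilde\Phi$ preserves the potential function. For $\mu\in\Wp(E)$ put $\nu:=\widetilde\Phi(\mu)$. Because $\widetilde\Phi$ preserves $\dwp$ (hence $\dwpp$) and fixes every Dirac mass,
\begin{equation*}
\potnu(x)=\dwpp(\nu,\delta_x)=\dwpp\ler{\widetilde\Phi(\mu),\widetilde\Phi(\delta_x)}=\dwpp(\mu,\delta_x)=\potmu(x)\qquad(x\in E).
\end{equation*}
Thus $\widetilde\Phi(\mu)$ and $\mu$ have identical potentials.

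The crux is to pass from the pointwise limit of Lemma \ref{lem:peak} to the integrated identity \eqref{eq:atom}. Writing the numerator of \eqref{eq:atom} as $\int_E \sum_{j=0}^{2k}\binom{2k}{j}(-1)^j\norm{x-y+(k-j)h}^p\dd\mu(y)$, I would interchange the limit $h\to\0$ with the integral. For each fixed $y$ the normalised integrand tends to $\bigchi_{\{\0\}}(x-y)=\bigchi_{\{x\}}(y)$ by Lemma \ref{lem:peak}, so the candidate limit is $\int_E\bigchi_{\{x\}}\dd\mu=\mu(\{x\})$. To license the interchange I would exhibit a bound on the normalised $2k$-th difference that is uniform in $h$: by homogeneity the quotient $\sum_{j=0}^{2k}\binom{2k}{j}(-1)^j\norm{v+(k-j)\hat{h}}^p$, with $\hat{h}=h/\norm{h}$ and $v=(x-y)/\norm{h}$, depends on $(x-y,h)$ only through $\norm{v}$ and $\inner{v}{\hat{h}}$; it is continuous, equals the nonzero constant of Corollary \ref{cor:nonzdiv} at $v=\0$, and tends to $0$ as $\norm{v}\to\infty$ by the same finite-difference decay established inside the proof of Lemma \ref{lem:peak}. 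Hence it is bounded by some $M=M(p,k)$, the constant $M$ is $\mu$-integrable since $\mu$ is finite, and dominated convergence gives \eqref{eq:atom}. (Alternatively one splits $E\setminus\{x\}$ into $\{\norm{x-y}\geq\delta\}$, where the uniform $\mathcal{O}(\norm{h}^{2k})$ estimate of Lemma \ref{lem:peak} forces the contribution to $0$, and $\{0<\norm{x-y}<\delta\}$, bounded by $M\cdot\mu(\{0<\norm{x-y}<\delta\})$, and then lets $\delta\to0$.) I expect this dominated-convergence step to be the genuine obstacle: the termwise estimate $\norm{x-y+(k-j)h}^p\leq(\norm{x-y}+k\norm{h})^p$ grows with $\norm{y}$ and is useless after division by $\norm{h}^p$, so the cancellation inside the finite difference must be exploited, exactly as in Lemma \ref{lem:peak}.

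Finally, applying \eqref{eq:atom} to $\mu$ and to $\widetilde\Phi(\mu)$, whose potentials agree, yields $\widetilde\Phi(\mu)(\{x\})=\mu(\{x\})$ for every $x\in E$; that is, $\widetilde\Phi$ preserves the atomic part of every measure. In particular, if $\mu=\sum_{i=1}^n\alpha_i\delta_{x_i}$ is finitely supported with distinct $x_i$ and $\sum_i\alpha_i=1$, then the atomic part of $\widetilde\Phi(\mu)$ already equals $\mu$ and carries full mass $1$, so $\widetilde\Phi(\mu)=\mu$. Since finitely supported measures form a $\dwp$-dense subset of $\Wp(E)$ and the isometry $\widetilde\Phi$ is continuous, $\widetilde\Phi=\id$ on all of $\Wp(E)$, whence $\Phi=\psi_\#$, which is precisely \eqref{eq:noteven}.
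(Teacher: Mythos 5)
Your proposal is correct and follows essentially the same route as the paper: reduce via Corollary \ref{cor:Dirac} to an isometry fixing all Dirac masses, note that potentials are then preserved, and establish \eqref{eq:atom} by dominated convergence, with the dominating constant obtained from the homogeneity and rotation invariance of the normalised $2k$-th difference together with its continuity and the uniform decay from Lemma \ref{lem:peak}. The paper's proof packages the boundedness argument as continuity of an explicitly defined function $G$ on a compact three-parameter set, whereas you reduce to the two parameters $\norm{v}$ and $\inner{v}{\hat h}$, but this is the same idea.
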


\begin{proof}
Using the notation of Corollary \ref{cor:Dirac} we have that the map $\widetilde{\Phi}:=\psi^{-1}_{\#}\circ\Phi$ fixes all Dirac measures. Once we show that $\widetilde{\Phi}$ fixes all elements of $\Wp(E)$, we get $\Phi=\psi_\#$. Hence it suffices to prove that if $\Phi$ itself fixes all Dirac measures then it fixes every measure. Note that this assumption implies $\potmu(x) = \potphimu(x)$ for all $\mu\in\Wp(E)$ and $x\in E$. 
Therefore it is enough to prove \eqref{eq:atom}, as it immediately implies that
\begin{equation*}\Phi(\mu)(\{x\}) = \mu(\{x\}) \qquad (\mu\in\Wp(E), x\in E),
\end{equation*}
hence that $\Phi$ fixes all measures with finite support, and thus by continuity that it fixes all measures.

To prove \eqref{eq:atom} define the function
$$
G \colon E\times E \to \R, \quad\; (x,h) \mapsto \left\{ \begin{matrix}
\frac{ \sum_{j=0}^{2k} \binom{2k}{j}(-1)^j\norm{\x+(k-j)\h}^p}{\ler{\sum_{j=0}^{2k} \binom{2k}{j}(-1)^j |k-j|^p }\norm{\h}^p} & \text{if}\; h\neq 0 \\
\bigchi_{\{0\}}(x) & \text{if}\; h=0
\end{matrix}
\right.,
$$
where we endow $E\times E$ with the natural $\ell^2$-summed norm, i.e. $\norm{(x,h)} = \sqrt{\norm{x}^2+\norm{h}^2}$.
We proved in Lemma \ref{lem:peak} that for all fixed $x\in E$ we have 
\begin{equation}\label{eq:lim0}
\lim_{h\to 0} G(x,h) = G(x,0). 
\end{equation}

We claim that the function $G$ is bounded on $E\times E$. In order to see that we use symmetry properties of $G$. Namely, observe first that
\begin{equation}\label{eq:hom}
G(tx,th) = G(x,h) \qquad (x,h\in E, t\in\R, t\neq 0).
\end{equation}
Hence it is enough to show boundedness on the set $\{(x,h)\colon\|(x,h)\|=1\}$.
Second, notice that for all linear isometries $R\colon E\to E$ we have
$$
G(Rx,Rh) = G(x,h) \qquad (x,h\in E).
$$
Fix a unit vector $e\in E$. Clearly, it suffices to prove boundedness for pairs $(\lambda e,h)$ such that $\lambda\in\R$, $|\lambda|^2+\norm{h}^2=1$.
Third, denote by $F$ the orthogonal complement of the linear subspace $\R\cdot e$. It is apparent that for all linear isometries $Q\colon F\to F$ we have
$$
G(\lambda e,\alpha e + Q z) = G(\lambda e,\alpha e + z) \qquad (\lambda,\alpha \in\R, z\in F, |\lambda|^2+|\alpha|^2+\norm{z}^2=1).
$$
Fix a unit vector $f\in F$. Plainly, it is enough to show boundedness on the following subset: 
$$
\left\{
(\lambda e,\alpha e + \beta f) \colon \lambda,\alpha,\beta\in\R, \;  \sqrt{|\lambda|^2+|\alpha|^2+|\beta|^2} = 1
\right\}.
$$
Now, we use \eqref{eq:hom} to conclude that it suffices to prove boundedness of $G$ on the following subset:
$$
\mathcal{C} := 
\left\{
(\lambda e,\alpha e + \beta f) \colon \lambda,\alpha,\beta\in\R, \;  \max\{|\lambda|,|\alpha|,|\beta|\} = 1
\right\}. 
$$
It is apparent that $\mathcal{C}$ is compact in $E\times E$, and that $G$ is continuous on $\mathcal{C}$. For the latter, we see this on $\mathcal{C}\setminus\{(-e,0),(e,0)\}$ simply from the definition of $G$, and at the points $(\pm e,0)$ by \eqref{eq:lim0}.

Now we use the Lebesgue dominant convergence theorem to complete the proof. Namely, we calculate the left-hand side of \eqref{eq:atom} as follows:
\begin{align*}
\lim_{h \to \0}  \int_E\frac{\sum_{j=0}^{2k} \binom{2k}{j}(-1)^j
 \norm{\x+(k-j)\h-y}^p }{\ler{\sum_{j=0}^{2k} \binom{2k}{j}(-1)^j |k-j|^p }\norm{\h}^p}\dd\mu(y)&=\lim_{\h \to \0} \int_E G(x-y,h) \dd\mu(y)\\
&= \int_E G(x-y,0) \dd\mu(y) = \mu(\{x\}).
\end{align*}
	
\end{proof}


\subsection{The case of $p=2$ -- existence of nontrivial isometries}\label{ss:2}
In \cite[Theorems 1.1--1.2]{Kloeckner-2010} Kloeckner proved a characterisation of isometries of $\Wt(E)$ for all finite dimensional Hilbert spaces $E$. 
Note that his proof does not work in the infinite dimensional case, as he uses absolutely continuous measures that have no analogue in infinite dimension. In this subsection we prove the characterisation of $\isom(\Wt(E))$ for the infinite dimensional case using finitely supported measures and building on the finite dimensional characterisation.

\begin{definition}[Barycenter]\label{def:bari}
	Let $E$ be an infinite dimensional separable Hilbert space and $\mu\in\Wt(E)$. Then the barycenter of $\mu$ is the point $m(\mu)\in E$ such that 
	\begin{equation}\label{eq:bari}
	\left\langle m(\mu), z \right\rangle = \int_E \langle x,z\rangle \dd\mu(x)
	\end{equation}
	holds for all $z\in E$.
\end{definition}

The following simple observation will play an important role later. If $\Phi\Wt(E)\to\Wt(E)$ is an isometry such that $\Phi(\delta_x)=\delta_x$  for all $x\in E$, then $\Phi$ preserves the barycenter of measures, that is, $m\ler{\Phi(\mu)} = m(\mu)$ for all $\mu\in\Wt(E)$. In order to see this, we calculate the following for all $z\in E$:
	\begin{align}\label{barycenter-isom}
	\dwt^2(\mu,\delta_{m(\mu)+z}) &= \int_E \norm{m(\mu)+z-y}^2\dd\mu(y)= \norm{z}^2-\norm{m(\mu)}^2 + \int_E \norm{y}^2\dd\mu(y).
	\end{align}
	Clearly, the minimum of the function $x\mapsto\dwt(\mu,\delta_{x}) = \dwt(\Phi(\mu),\delta_{x})$ is attained at $x=m(\mu)=m\ler{\Phi(\mu)}$.
Note also that the affine subspace spanned by $\supp(\mu)$ must contain $m(\mu)$.
\begin{definition}[Translation of a measure by a vector]
	Let $\mu\in\Wt(E)$ and $v\in E$. The map $t_v\colon E \to E$, $x\mapsto x+v$ is called the translation by $v$.
	The translation of $\mu$ by $v$ is the measure $(t_v)_\#\mu\in\Wt(E)$. Note that $\supp((t_v)_\#\mu) = t_v[\supp(\mu)]$.
\end{definition}

First we have to understand how translation affects the $\Wt$--distance (for the proof see the Appendix).

\begin{lemma}\label{lem:transl}
	Let $\mu,\nu\in\Wt(E)$ and $v\in E$. Then we have
	\begin{equation}\label{eq:transl}
	\dwt^2\left((t_v)_\#\mu,\nu\right) = \dwt^2(\mu,\nu) + \left\langle v, v + 2m(\mu) - 2m(\nu) \right\rangle.
	\end{equation}
	In particular, substituting $v = m(\nu) - m(\mu)$ gives
	\begin{equation}\label{eq:bari-transl}
	\dwt^2(\mu,\nu) = \dwt^2\left((t_{- m(\mu)})_\#\mu,(t_{- m(\nu)})_\#\nu\right) + \norm{m(\nu) - m(\mu)}^2.
	\end{equation}
	As a consequence, $\nu$ is a translated version of $\mu$ if and only if $\dwt(\mu,\nu) = \norm{m(\nu) - m(\mu)}$.
\end{lemma}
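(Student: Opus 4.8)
The plan is to exploit that translating the first marginal induces a bijection between the relevant coupling sets and shifts the transport cost by an amount independent of the chosen plan. Concretely, the map $\pi\mapsto (t_v\times\id)_\#\pi$ is a bijection from $\Pi(\mu,\nu)$ onto $\Pi\big((t_v)_\#\mu,\nu\big)$: pushing a coupling of $\mu$ and $\nu$ forward under $t_v\times\id$ replaces the first marginal $\mu$ by $(t_v)_\#\mu$ while leaving the second marginal $\nu$ untouched, and the inverse is given by $t_{-v}\times\id$. Under this correspondence the cost transforms according to the pointwise identity $\norm{x+v-y}^2=\norm{x-y}^2+2\inner{v}{x-y}+\norm{v}^2$, which I would integrate against each $\pi$.

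The decisive point is that the integral of the cross term is governed by the barycenters and does not depend on $\pi$. Writing $\inner{v}{x-y}=\inner{v}{x}-\inner{v}{y}$ and using that the marginals of $\pi$ are $\mu$ and $\nu$, the definition of the barycenter in \eqref{eq:bari} gives $\int_{E\times E}\inner{v}{x}\dd\pi(x,y)=\int_E\inner{v}{x}\dd\mu(x)=\inner{v}{m(\mu)}$, and likewise $\int_{E\times E}\inner{v}{y}\dd\pi=\inner{v}{m(\nu)}$. Hence for every coupling $\int_{E\times E}\norm{x+v-y}^2\dd\pi=\int_{E\times E}\norm{x-y}^2\dd\pi+\inner{v}{v+2m(\mu)-2m(\nu)}$, the correction being a constant. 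Taking the infimum over $\Pi(\mu,\nu)$, equivalently over $\Pi\big((t_v)_\#\mu,\nu\big)$ via the bijection above, yields \eqref{eq:transl}. Since finite second moments guarantee finite first moments, the cross term is integrable and both sides are simultaneously finite.

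To obtain \eqref{eq:bari-transl} I would substitute $v=m(\nu)-m(\mu)$ into \eqref{eq:transl}, which makes the correction term equal to $-\norm{m(\nu)-m(\mu)}^2$, and then combine this with the elementary translation invariance of $\dwt$ under a common shift of both measures (the coupling bijection induced by $t_w\times t_w$ preserves the cost, as $\norm{(x+w)-(y+w)}=\norm{x-y}$). Indeed, shifting both $(t_{m(\nu)-m(\mu)})_\#\mu$ and $\nu$ by $-m(\nu)$ turns them into the centred measures $(t_{-m(\mu)})_\#\mu$ and $(t_{-m(\nu)})_\#\nu$ without changing the distance, which gives \eqref{eq:bari-transl}. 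Finally, for the stated consequence: if $\nu=(t_w)_\#\mu$ is a translate, then comparing barycenters forces $w=m(\nu)-m(\mu)$, so the two centred measures coincide and \eqref{eq:bari-transl} yields $\dwt(\mu,\nu)=\norm{m(\nu)-m(\mu)}$; conversely, equality in \eqref{eq:bari-transl} forces $\dwt\big((t_{-m(\mu)})_\#\mu,(t_{-m(\nu)})_\#\nu\big)=0$, whence the centred measures agree and $\nu$ is the translate of $\mu$ by $m(\nu)-m(\mu)$.

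The computation is essentially routine, so I do not anticipate a serious obstacle; the only points requiring care are justifying integrability of the linear cross term (which follows from $\Wt(E)$ having finite second, hence finite first, moments) and making sure that the weak definition of the barycenter in \eqref{eq:bari} is precisely what is needed to evaluate $\int_E\inner{v}{x}\dd\mu(x)$, so that no stronger Bochner-integral statement is invoked.
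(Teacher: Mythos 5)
Your proposal is correct and follows essentially the same route as the paper's proof: push couplings forward by $t_v\times\id$ to identify the two coupling sets, expand $\norm{x+v-y}^2$, evaluate the cross term via the marginals and the barycenter definition, and take the infimum; the second identity likewise follows by a common translation of both measures. Your additional remarks on integrability and the explicit two-directional argument for the final consequence are fine elaborations of what the paper leaves implicit.
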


The above lemma readily implies the following analogue of \cite[Proposition 6.1]{Kloeckner-2010}, namely, that \lq\lq rotating'' around the barycenters of measures preserves the quadratic Wasserstein distance. 

\begin{corollary}
	Suppose that $E$ is an infinite dimensional separable Hilbert space.
	Let $R\colon E\to E$ be a linear isometry of $E$. Then the following map defines an isometry of $\Wt(E)$:
	\begin{equation*}\label{eq:Wt-rot}
	\Phi\colon \Wt(E) \to \Wt(E), \qquad \mu \mapsto \ler{t_{m(\mu)}}_\#\ler{R_\# \ler{\ler{t_{-m(\mu)}}_\#\mu}} = \ler{t_{m(\mu)}\circ R \circ t_{-m(\mu)}}_\#\mu.
	\end{equation*}
\end{corollary}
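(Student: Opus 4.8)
The plan is to verify directly that the proposed map $\Phi$ preserves the $\Wt$-distance, using the barycenter decomposition formula \eqref{eq:bari-transl} from Lemma~\ref{lem:transl} as the central tool. The key observation is that the map $\mu \mapsto \ler{t_{m(\mu)}\circ R \circ t_{-m(\mu)}}_\#\mu$ first recenters $\mu$ to have barycenter at the origin, applies the linear isometry $R$, and then translates back. I would first record two elementary facts about how $R$ and translations interact with the barycenter. Since $R$ is a \emph{linear} isometry, it commutes with the barycenter functional: $m\ler{R_\#\sigma} = R\ler{m(\sigma)}$ for every $\sigma\in\Wt(E)$, which follows immediately from the defining relation \eqref{eq:bari} together with $\inner{Rx}{z} = \inner{x}{R^{-1}z}$ and the fact that $R$ is surjective. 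Likewise, translating by a vector $v$ shifts the barycenter: $m\ler{(t_v)_\#\sigma} = m(\sigma) + v$.

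From these facts I would compute the barycenter of the recentered-and-rotated measure. Writing $\widehat{\mu} := \ler{t_{-m(\mu)}}_\#\mu$, we have $m(\widehat{\mu}) = 0$, hence $m\ler{R_\#\widehat{\mu}} = R(0) = 0$, and therefore $m\ler{\Phi(\mu)} = m\ler{\ler{t_{m(\mu)}}_\# R_\#\widehat{\mu}} = m(\mu)$. Thus $\Phi$ preserves barycenters. The second key point is that the \emph{centered} part of $\Phi(\mu)$ is exactly $R_\#$ applied to the centered part of $\mu$: since $m\ler{\Phi(\mu)} = m(\mu)$, the recentering of $\Phi(\mu)$ is
\begin{equation*}
\ler{t_{-m(\Phi(\mu))}}_\#\Phi(\mu) = \ler{t_{-m(\mu)}}_\#\ler{t_{m(\mu)}}_\# R_\#\widehat{\mu} = R_\#\widehat{\mu} = R_\#\ler{\ler{t_{-m(\mu)}}_\#\mu}.
\end{equation*}

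With these two facts in hand, the proof of the isometry property is a direct substitution into \eqref{eq:bari-transl}. For any $\mu,\nu\in\Wt(E)$, formula \eqref{eq:bari-transl} gives
\begin{equation*}
\dwt^2\ler{\Phi(\mu),\Phi(\nu)} = \dwt^2\ler{R_\#\widehat{\mu},\, R_\#\widehat{\nu}} + \norm{m(\Phi(\nu)) - m(\Phi(\mu))}^2,
\end{equation*}
where I have used the centered-part identity just derived. Since $R$ is a linear isometry of $E$, push-forward by $R$ is a trivial isometry of $\Wt(E)$, so $\dwt\ler{R_\#\widehat{\mu}, R_\#\widehat{\nu}} = \dwt\ler{\widehat{\mu},\widehat{\nu}}$; and by the barycenter-preservation $m(\Phi(\nu)) - m(\Phi(\mu)) = m(\nu) - m(\mu)$. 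Substituting both and applying \eqref{eq:bari-transl} once more in the reverse direction recovers precisely $\dwt^2(\mu,\nu)$. This shows $\Phi$ is distance preserving. Finally, I would note that $\Phi$ is a bijection: its inverse is obtained by replacing $R$ with $R^{-1}$ (which is again a linear isometry), as one checks using $m(\Phi(\mu)) = m(\mu)$. I anticipate the only genuinely delicate point is the bookkeeping that $R$ commutes with the barycenter and that recentering $\Phi(\mu)$ strips the outer translation cleanly; once those are established, everything reduces to two applications of Lemma~\ref{lem:transl} and the triviality of $R_\#$.
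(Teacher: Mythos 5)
Your argument is correct and follows the same overall skeleton as the paper's: both reduce the claim, via the barycenter decomposition \eqref{eq:bari-transl} of Lemma \ref{lem:transl}, to the single identity $\dwt(R_\#\widehat{\mu},R_\#\widehat{\nu})=\dwt(\widehat{\mu},\widehat{\nu})$ for centered measures, and your two bookkeeping observations (that $\Phi$ preserves barycenters and that the centered part of $\Phi(\mu)$ is exactly $R_\#\ler{(t_{-m(\mu)})_\#\mu}$) are precisely what the paper leaves implicit. The one genuine divergence is how that last identity is handled. You dispose of it by declaring $R_\#$ a trivial isometry of $\Wt(E)$, and you invoke $R^{-1}$ both there (via $\inner{Rx}{z}=\inner{x}{R^{-1}z}$) and in constructing $\Phi^{-1}$; this is fine exactly when $R$ is surjective, i.e.\ an orthogonal operator, which in infinite dimensions is a real additional hypothesis on a ``linear isometry''. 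The paper instead proves $\dwt(R_\#\mu,R_\#\nu)=\dwt(\mu,\nu)$ by approximating with measures supported on a finite-dimensional subspace $M$, extending $R|_M$ to a \emph{surjective} linear isometry $\widetilde R$ of $E$ that agrees with $R$ on $M$, and citing \cite[Proposition 6.1]{Kloeckner-2010} plus continuity; that route covers non-surjective $R$ as well (for which only distance preservation, not bijectivity, of $\Phi$ can hold). Two small repairs would make your version airtight: state explicitly that you take $R$ to be surjective (which is anyway forced if $\Phi$ is to be a bijection, as the corollary asserts), and note that the barycenter identity $m(R_\#\sigma)=R\,m(\sigma)$ needs only the adjoint, since $\inner{Rx}{z}=\inner{x}{R^{*}z}$ holds for any bounded linear $R$ without invoking invertibility.
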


We continue with an analogue of \cite[Lemma 6.2]{Kloeckner-2010}. Although the argument is similar, there are some technical differences, and thus we present the proof in the Appendix. We note that affine and linear subspaces are implicitly meant to be closed.

\begin{lemma}\label{lem:ort}
	Let $\mu,\nu\in\Wt(E)$, $\sigma:=\dwt(\mu,\delta_{m(\mu)})$ and $\rho:=\dwt(\nu,\delta_{m(\nu)})$. Then
	\begin{equation}\label{eq:ort}
	\dwt^2(\mu,\nu) = \norm{m(\mu)-m(\nu)}^2+\sigma^2+\rho^2
	\end{equation}
	holds if and only if there exists two orthogonal affine subspaces $L$ and $M$ such that $\supp(\mu)\subset L$ and $\supp(\nu)\subset M$.
\end{lemma}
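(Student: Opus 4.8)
The plan is to reduce everything to centered measures and then reinterpret \eqref{eq:ort} as the statement that the independent (product) coupling is optimal. I put $\mu_0 := (t_{-m(\mu)})_\#\mu$ and $\nu_0 := (t_{-m(\nu)})_\#\nu$, so that $m(\mu_0)=m(\nu_0)=0$, while $\sigma = \dwt(\mu_0,\delta_0)$ and $\rho = \dwt(\nu_0,\delta_0)$ are unchanged. By \eqref{eq:bari-transl}, equation \eqref{eq:ort} is equivalent to $\dwt^2(\mu_0,\nu_0)=\sigma^2+\rho^2$. Moreover, $\supp(\mu)$ and $\supp(\nu)$ lie in orthogonal affine subspaces if and only if $\supp(\mu_0)$ and $\supp(\nu_0)$ lie in orthogonal \emph{linear} subspaces: the barycenters belong to the respective affine hulls, so centering turns the affine subspaces into their direction spaces through the origin. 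Thus it suffices to prove the lemma for the centered pair $\mu_0,\nu_0$.

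Next I would expand the transport cost. For any $\pi\in\Pi(\mu_0,\nu_0)$, since $\int\norm{x}^2\dd\mu_0=\sigma^2$ and $\int\norm{y}^2\dd\nu_0=\rho^2$ are finite (so $\inner{x}{y}$ is $\pi$-integrable by Cauchy--Schwarz), one has $\int\norm{x-y}^2\dd\pi = \sigma^2+\rho^2-2\int\inner{x}{y}\dd\pi$. Hence $\dwt^2(\mu_0,\nu_0)=\sigma^2+\rho^2-2\sup_{\pi}\int\inner{x}{y}\dd\pi$, and the product coupling gives $\int\inner{x}{y}\dd(\mu_0\otimes\nu_0)=\inner{m(\mu_0)}{m(\nu_0)}=0$, so the supremum is always $\geq 0$. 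Therefore \eqref{eq:ort} holds if and only if $\sup_{\pi}\int\inner{x}{y}\dd\pi=0$, i.e.\ if and only if the product coupling is optimal. The forward implication is then immediate: if the centered supports lie in orthogonal linear subspaces, every coupling is concentrated on $\supp(\mu_0)\times\supp(\nu_0)$, where $\inner{x}{y}\equiv 0$, so the supremum is $0$ and \eqref{eq:ort} holds.

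For the converse I argue by contraposition, assuming the supports are not orthogonal and constructing a coupling with strictly positive correlation. First I locate a pair with positive inner product: non-orthogonality yields $x_0\in\supp(\mu_0)$, $y_0\in\supp(\nu_0)$ with $\inner{x_0}{y_0}\neq 0$; if this is negative, then since centering gives $\int\inner{x}{y_0}\dd\mu_0(x)=\inner{m(\mu_0)}{y_0}=0$, the open set $\{x:\inner{x}{y_0}>0\}$ has positive $\mu_0$-measure and hence meets $\supp(\mu_0)$, producing $x_*\in\supp(\mu_0)$ with $c:=\inner{x_*}{y_0}>0$. Writing $y_*:=y_0$, I choose by continuity $\varepsilon>0$ with $\inner{x}{y}\geq c/2$ for $x\in A:=B(x_*,\varepsilon)$, $y\in B:=B(y_*,\varepsilon)$, and shrink $\varepsilon$ (using $x_*,y_*\neq 0$, so neither measure is a Dirac at these points) to ensure $a:=\mu_0(A)\in(0,1)$ and $b:=\nu_0(B)\in(0,1)$. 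The decisive step is then a $2\times 2$ rearrangement of the product coupling: keeping within-cell product couplings on $A\times B$, $A\times B^c$, $A^c\times B$, $A^c\times B^c$, I raise the mass on $A\times B$ (and, forced by the marginals, on $A^c\times B^c$) by some $\Delta>0$ and lower the two off-diagonal cells by $\Delta$. Using the centering relations $a\,m_A+(1-a)\,m_{A^c}=0$ and its analogue for $\nu_0$ (where $m_A,m_{A^c},m_B,m_{B^c}$ are barycenters of the normalized restrictions), a direct computation gives correlation $\Delta\,(1-a)^{-1}(1-b)^{-1}\inner{m_A}{m_B}\geq \Delta\,(1-a)^{-1}(1-b)^{-1}\,c/2>0$, since $m_A\in\overline{A}$ and $m_B\in\overline{B}$. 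Thus the supremum is strictly positive and \eqref{eq:ort} fails. The main obstacle is precisely this last construction: upgrading a single non-orthogonal pair to a genuine coupling realizing strictly positive correlation, for which the sign-fixing via centering and the explicit $2\times 2$ mass rearrangement are the essential ingredients.
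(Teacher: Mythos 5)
Your proof is correct, and its skeleton matches the paper's: both reduce \eqref{eq:ort} to the statement that the product coupling $\mu\otimes\nu$ is optimal (you do this after centering via \eqref{eq:bari-transl}, the paper by expanding $\int\norm{x-y}^2\dd(\mu\otimes\nu)$ directly), and both handle the forward implication by the Pythagorean identity on orthogonal supports. Where you genuinely diverge is the converse. The paper picks $x,y\in\supp(\mu)$, $z,t\in\supp(\nu)$ with $\inner{x-y}{z-t}<0$, notes $\norm{x-z}^2+\norm{y-t}^2>\norm{x-t}^2+\norm{y-z}^2$, and concludes that $(x,z)$ or $(y,t)$ cannot lie in the support of any optimal plan, whereas both lie in $\supp(\mu\otimes\nu)$; this is the standard local-swap/cyclical-monotonicity argument and is shorter. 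You instead exhibit an explicit competitor: after using the centering identity $\int\inner{x}{y_0}\dd\mu_0=0$ to upgrade a non-orthogonal pair to one with strictly positive inner product, you perform a $2\times 2$ mass rearrangement of the product coupling and compute its correlation to be $\Delta(1-a)^{-1}(1-b)^{-1}\inner{m_A}{m_B}>0$. This is more hands-on and entirely self-contained (no appeal to properties of supports of optimal plans), at the cost of the bookkeeping with the cell barycenters; the identity $m_A-m_{A^c}=m_A/(1-a)$ and the fact that $m_A\in\overline{A}$, $m_B\in\overline{B}$ (closed convex hulls) make the computation go through. All the small technical points you flag (integrability of $\inner{x}{y}$ via Cauchy--Schwarz, $\mu_0(A)\in(0,1)$ because $\mu_0$ is not a Dirac at $x_*\neq 0$, admissibility of $\pi_\Delta$ for small $\Delta$) check out.
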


Now, we are in the position to prove the infinite-dimensional version of Kloeckner's result \cite[Theorems 1.1--1.2]{Kloeckner-2010}.

\begin{theorem}\label{thm:2}
	Suppose that $E$ is an infinite dimensional separable real Hilbert space and $\Phi$ is an isometry of $\Wt(E)$. Then there exists an (affine) isometry $\psi\in\isom(E)$ and a linear isometry $R\colon E\to E$ such that
	\begin{equation}\label{eq:2}
	\Phi(\mu) = \ler{\psi \circ t_{m(\mu)}\circ R \circ t_{-m(\mu)}}_\#\mu \qquad (\mu\in\Wt(E)).
	\end{equation}
\end{theorem}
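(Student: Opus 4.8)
The plan is to reduce the general isometry to a manageable normal form and then invoke the finite-dimensional classification \cite[Theorems 1.1--1.2]{Kloeckner-2010} locally. First I would apply Corollary \ref{cor:Dirac} to produce an isometry $\psi\in\isom(E)$ governing the action of $\Phi$ on Dirac measures, and then pass to $\widetilde{\Phi}:=\psi^{-1}_\#\circ\Phi$, which fixes every Dirac measure. By the barycenter observation \eqref{barycenter-isom}, $\widetilde{\Phi}$ preserves barycenters, so by composing with the barycenter-preserving translation conjugation I may further assume (replacing $\mu$ by $(t_{-m(\mu)})_\#\mu$) that $\widetilde{\Phi}$ fixes Diracs and barycenters, and it then suffices to understand how $\widetilde{\Phi}$ acts on the fibre of centred measures, i.e.\ those with barycenter $\0$. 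The goal becomes to show that $\widetilde{\Phi}$ restricted to centred measures is implemented by a single linear isometry $R$, uniformly in $\mu$, which upon unwinding the conjugations yields \eqref{eq:2}.

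The heart of the argument is to exploit the orthogonality characterisation of Lemma \ref{lem:ort}, which is a purely metric condition and hence is preserved by $\widetilde{\Phi}$. Concretely, for a centred measure $\mu$ supported on a finite-dimensional subspace, I would use that $\dwt^2(\mu,\nu)=\|m(\mu)-m(\nu)\|^2+\sigma^2+\rho^2$ holds exactly when the supports lie in orthogonal affine subspaces; since $\widetilde{\Phi}$ fixes the right-hand quantities (barycenters and the distances to the respective Diracs), it must send orthogonally-supported pairs to orthogonally-supported pairs. This lets me transport the affine-subspace structure: the key intermediate claim is that $\widetilde{\Phi}$ maps the set of (centred) measures supported on a given finite-dimensional linear subspace $N$ onto the measures supported on some subspace $N'$ of the same dimension, preserving incidence and orthogonality of these subspaces. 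On each such finite-dimensional slice, Kloeckner's finite-dimensional theorem applies and produces a linear isometry; the approximation from \cite[Proposition 6.1]{Kloeckner-2010} (as in the commented-out proof above) handles the embedding of finitely supported measures into a large enough finite-dimensional $\Wt(N)$.

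The main obstacle, and the genuinely new content beyond the finite-dimensional case, is \textbf{coherence}: the finite-dimensional classification yields on each finite-dimensional subspace its own linear isometry, and I must prove these patch together into one global linear isometry $R$ independent of the subspace. I would establish this by a consistency argument: whenever two finite-dimensional subspaces are nested, the isometries obtained on them must agree on the intersection (because $\widetilde{\Phi}$ restricted to the smaller slice is computed the same way on both), so the collection defines a single isometry on the dense union of all finite-dimensional subspaces, which then extends by continuity to a linear isometry $R$ of all of $E$. The orthogonality preservation is exactly the glue that forces the rotations on different blocks to be compatible rather than independent. Once $R$ is shown to be well-defined and to implement $\widetilde{\Phi}$ on all finitely supported centred measures, density of finitely supported measures in $\Wt(E)$ together with continuity of $\widetilde{\Phi}$ upgrades the identity $\widetilde{\Phi}(\mu)=R_\#\mu$ (on centred $\mu$) to all of $\Wt(E)$, and reversing the two conjugation steps recovers the stated form \eqref{eq:2} with the affine isometry $\psi$ and the linear isometry $R$.
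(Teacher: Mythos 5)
Your proposal follows essentially the same route as the paper: reduce via Corollary \ref{cor:Dirac} to an isometry fixing Dirac measures, use barycenter preservation to pass to the centred fibre $\Wt^0(E)$, exploit the orthogonality characterisation of Lemma \ref{lem:ort} to show finite-dimensional slices map to finite-dimensional slices, apply Kloeckner's finite-dimensional classification on each slice, and patch the resulting linear isometries by compatibility on intersections before concluding by density. The only ingredient you leave implicit is how the induced correspondence on the lattice of subspaces is itself linearised -- the paper does this by first proving that one-dimensional subspaces go to one-dimensional subspaces and then invoking Uhlhorn's theorem to produce the linear isometry $U$, whereas you absorb this into your ``key intermediate claim'' and the coherence step.
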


\begin{proof}
	By Corollary \ref{cor:Dirac} we can assume that $\Phi(\delta_x) = \delta_x$ holds for all $x\in E$. With this assumption we have that $\Phi$ preserves the barycenter of measures (see \eqref{barycenter-isom} above), that is $m\ler{\Phi(\mu)} = m(\mu)$ for all $\mu\in\Wt(E)$, and that $\psi$ is the identity of $E$ in \eqref{eq:2}. For any $x\in E$ we use the notation 
	\begin{equation}\Wt^x(E) := \{\mu\in\Wt(E)\colon m(\mu)=x\}.
	\end{equation}
	It is clear that we have $\Phi(\Wt^x(E))=\Wt^x(E)$, and that the map 
	\begin{equation}\label{eq:Wtx-restr}
	\ler{\ler{(t_{-x})_\#}\circ\ler{\Phi|_{\Wt^x(E)}}\circ\ler{(t_{x})_\#}}\Big|_{\Wt^0(E)}
	\end{equation}
	is an isometry of $\Wt^0(E)$. Observe that by Lemma \ref{lem:transl} the above map is independent of $x$. In particular, the restriction $\Phi|_{\Wt^0(E)}$ determines $\Phi$ and vica-versa. Therefore, in order to verify \eqref{eq:2} it is enough to show that 
	\begin{equation}\label{W2null}
	\Phi(\mu)=R_\#\mu\qquad\big(\mu\in\Wt^0(E)\big)
	\end{equation}for some linear isometry $R$. Next, for every linear subspace $M\subset E$ set 
	\begin{equation}\Wt^0(M) := \{\mu\in\Wt^0(E)\colon \supp(\mu)\subset M\}.
	\end{equation}
	We say that two measures $\mu,\nu\in\Wt^0(E)$ are orthogonally supported, if their support span two orthogonal linear subspaces. By Lemma \ref{lem:ort}, the property of being orthogonally supported is preserved in both directions by the restriction $\Phi|_{\Wt^0(E)}$. For every one-dimensional linear subspace $L$ let us fix a measure $\mu_L \in \Wt^0(E)\setminus\Delta(E)$ such that $\supp(\mu_L)\subset L$. Let $M_L$ be the linear subspace generated by $\supp(\Phi(\mu_L))$, notice that $M_L\neq \{0\}$. 
	It is clear by the orthogonality-preservation that if $\{L_j\}_{j=1}^\infty$ is a complete set of pairwise orthogonal one-dimensional linear subspaces, then the subspaces $\{M_{L_j}\}_{j=1}^\infty$ are pairwise orthogonal and they also span $E$. Again by the orthogonality-preservation property, we get that 
	$$\Phi(\Wt^0(L)) \subset \Wt^0(M_L)\quad\mbox{and}\quad\Phi^{-1}(\Wt^0(M_L)) \subset \Wt^0(L)$$ 
	hold for all one-dimensional subspaces $L$, hence $\Phi^{-1}(\Wt^0(M_L)) = \Wt^0(L)$. Consequently, we always have $\dim M_{L} = 1$, since otherwise there would exist two measures in $\Wt^0(M_{L})\setminus\Delta(E)$ supported on orthogonal linear subspaces of $M_L$ whose $\Phi^{-1}$--images would be orthogonal, which is impossible as they are in $\Wt^0(L)\setminus\Delta(E)$.
	
	Now, by Uhlhorn's theorem \cite{U}, we obtain that there is a bijective linear isometry $U\colon E\to E$ such that $M_L = UL$ for all one-dimensional linear subspaces $L$. Again, by the orthogonality preservation property, we obtain that $\Phi(\Wt^0(M)) = \Wt^0(UM)$ for all linear subspaces $M\subset E$.

It was proved by Kloeckner in \cite[Theorem 1.2]{Kloeckner-2010} that if $E$ is a euclidean space with $2\leq \dim E <\infty$, and $\Phi$ is an isometry  then $\Phi|_{\Wt^0(E)} = \ler{V_\#}|_{\Wt^0(E)}$ with some linear isometry $V\colon E\to E$. Using this result, one easily obtains the following: for every linear subspace $M$, $2\leq \dim M <\infty$ there exists a bijective linear isometry $V_M\colon UM\to UM$ such that
	$$
	\Phi(\mu) = (V_M\circ U)_\#\mu \qquad (\mu\in\Wt^0(M)).
	$$
	Since all these isometries are compatible in the sense that $V_M|_{U(M\cap N)}=V_N|_{U(M\cap N)}$ for all linear subspaces $M$ and $N$ ($2\leq\dim{M},\dim{N}<\infty$), a standard argument shows that these $V_M$'s have a joint extension $V\colon E\to E$. In particular,
	$\Phi(\mu) = (V\circ U)_\#\mu$ for every finitely supported measure $\mu\in\Wt^0(E)$. Since every element of $\Wt^0(E)$ can be approximated with such measures and $\Phi$ is continuous, we get \eqref{W2null} with $R=V\circ U$.
	\end{proof}


\subsection{The case of $2\mid p$, $4\leq p$ -- isometric rigidity}
\label{ss:468}
As it was mentioned earlier, even parameters must be handled separately because in that case the potential function alone does not carry enough information to completely identify measures. To explain the difficulties better, and to highlight the main ideas of the proof, let us begin with sketching the special case $p=4$, $E=\mathbb{R}^2$. By Corollary \ref{cor:Dirac}, without loss of generality we may assume that our isometry $\Phi:\mathcal{W}_4(\mathbb{R}^2)\to\mathcal{W}_4(\mathbb{R}^2)$ leaves every Dirac measure fixed, and from here our aim is to show that $\Phi$ leaves every measure fixed. The expansion of $\|x-y\|^p$ takes the following form:
\begin{align*}
\norm{(x_1,x_2)-(y_1,y_2)}^4
 &= (x_1^4 + 2 x_1^2 x_2^2 + x_2^4) + (- 4 x_1^3 y_1 - 4 x_1 x_2^2 y_1 - 4 x_1^2 x_2 y_2 - 4 x_2^3 y_2)+ \nonumber \\
 &\qquad + (6 x_1^2 y_1^2 + 6 x_2^2 y_2^2 + 8 x_1 x_2 y_1 y_2 + 2 x_2^2 y_1^2 + 2 x_1^2 y_2^2)+ \nonumber \\
 &\qquad + (- 4 x_1 y_1^3 - 4 x_2 y_1^2 y_2 - 4 x_1 y_1 y_2^2 - 4 x_2 y_2^3) + (y_1^4 + 2 y_1^2 y_2^2 + y_2^4), 
\end{align*}
and thus the potential function $\potmuf(x_1,x_2)$ is a polynomial of degree four, where the coefficients are integrals of polynomials of $(y_1,y_2)$ with respect to $\dd\mu(y_1,y_2)$. In particular, the coefficients of $x_1^2$, $x_2^2$ and $x_1x_2$ are 
$$\int_{\R^2} 6 y_1^2 + 2 y_2^2 \dd\mu(y_1,y_2),\quad \int_{\R^2} 2 y_1^2 + 6 y_2^2 \dd\mu(y_1,y_2)\quad\mbox{and}\quad \int_{\R^2} 8 y_1 y_2 \dd\mu(y_1,y_2).$$
Therefore, if the potential functions of $\mu$ and $\nu$ coincide, then we obtain
$$
\int_{\R^2} \inner{(y_1,y_2)}{(v_1,v_2)}^2 \dd\mu(y_1,y_2) = \int_{\R^2} \inner{(y_1,y_2)}{(v_1,v_2)}^2 \dd\nu(y_1,y_2) \qquad ((v_1,v_2)\in\R^2).
$$
Hence $\mu$ is supported on a one-dimensional linear subspace if and only if the above integral is $0$ for some $(v_1,v_2)\in\R^2\setminus\{(0,0)\}$. This happens if and only if $\nu$ is supported on the same one-dimensional linear subspace.

Now, we obtain that for every one-dimensional linear subspace $L$ of $\R^2$ the isometry $\Phi$ maps $\mathcal{W}_4(L)$ bijectively onto itself. So we can use the result \cite[Theorem 3.16]{GTV} to obtain that $\Phi$ fixes all elements of $\mathcal{W}_4(L)$. In particular, it fixes all measures which are supported on two points whose affine hull contains $(0,0)$. 
From here, by Lemma \ref{lem:bisector} below, we easily obtain 
$\mu(\ell) = \Phi(\mu)(\ell)$ for every one-dimensional affine subspace $\ell$ and $\mu\in\mathcal{W}_4(\R^2)$.
It follows that that $\Phi$ fixes all finitely supported measures, and therefore by continuity $\Phi$ is the identity map on $\mathcal{W}_4(\R^2)$.\\

After this short sketch we continue with the general case, i.e. if $E$ is a separable real Hilbert space and $p=2k$ for some $k\in\mathbb{N}$, $k\geq2$. We define the following measures:
\begin{equation}\label{eq:2-p}
\zeta_{a,b}^\alpha(x):=\alpha\cdot\delta_{ax}+(1-\alpha)\cdot\delta_{bx}
\end{equation}
where $0\leq \alpha\leq 1$, $x\in E, x\neq 0$, $a,b\in\R, a\neq b$. For any two points $x,y\in E, x\neq y$ define the bisector 
$$B(x,y) := \{z\in E \colon \norm{x-z}=\norm{y-z}\} = \{x-y\}^\perp+\frac{x+y}{2}$$ 
which is an affine hyperplane. The next lemma holds for any $0<p<\infty$, its proof is given in the Appendix.

\begin{lemma}\label{lem:bisector}
	Let $0<p<\infty$, $\mu\in\Wp(E)$, $x\in E, x\neq 0$, $a,b\in\R, a\neq b$. Set
	$$m := \min\{\dwp(\mu,\zeta_{a,b}^{\alpha}(x))\colon 0\leq \alpha\leq 1 \}.$$
	Then we have
	\begin{align}
	&\mu\ler{B(ax,bx)}= \max\{\alpha\colon \dwp(\mu,\zeta_{a,b}^{\alpha}(x))=m\} - \min\{\alpha\colon \dwp(\mu,\zeta_{a,b}^{\alpha}(x))=m\}. \label{eq:bisector}
	\end{align}
\end{lemma}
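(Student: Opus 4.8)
The plan is to reduce the computation of $\dwp\ler{\mu,\zeta_{a,b}^{\alpha}(x)}$ to a one–dimensional linear program, from which both the minimal value and the entire set of minimising parameters $\alpha$ can be read off. Since $a\neq b$ and $x\neq 0$ we have $ax\neq bx$, so the target measure $\zeta_{a,b}^{\alpha}(x)=\alpha\delta_{ax}+(1-\alpha)\delta_{bx}$ is supported on two distinct points and $B(ax,bx)$ is a genuine affine hyperplane. Every coupling $\pi\in\Pi\ler{\mu,\zeta_{a,b}^{\alpha}(x)}$ is concentrated on $E\times\lecurl{ax,bx}$ and is therefore determined by its disintegration against $\mu$: writing $f(y)\in[0,1]$ for the conditional mass transported from $y$ to $ax$, couplings correspond bijectively to measurable functions $f\colon E\to[0,1]$ satisfying $\int_E f\dd\mu=\alpha$. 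Setting $c(y):=\norm{y-ax}^p-\norm{y-bx}^p$, the transport cost of such a coupling equals
$$
\int_E \lesq{f(y)\norm{y-ax}^p+(1-f(y))\norm{y-bx}^p}\dd\mu(y)=\int_E \norm{y-bx}^p\dd\mu(y)+\int_E f(y)\,c(y)\dd\mu(y).
$$

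First I would record the geometric meaning of the sign of $c$: as $t\mapsto t^p$ is strictly increasing on $[0,\infty)$, the set $\lecurl{c<0}$ is the open half–space $H_a$ of points strictly closer to $ax$, the set $\lecurl{c>0}$ is the open half–space $H_b$ of points strictly closer to $bx$, and $\lecurl{c=0}$ is precisely the bisector $B(ax,bx)$. Because $\alpha=\int_E f\dd\mu$ is itself recovered from $f$, minimising over couplings and then over $\alpha\in[0,1]$ is the same as minimising the linear functional $f\mapsto\int_E f\,c\dd\mu$ over all $f\colon E\to[0,1]$ with no constraint on its integral. The pointwise optimal choice is the threshold function equal to $1$ on $H_a$ and $0$ on $H_b$, which yields the minimal raw cost $\int_E\min\lecurl{\norm{y-ax}^p,\norm{y-bx}^p}\dd\mu(y)$. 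Since $\dwp$ is obtained from the raw cost by the strictly increasing map $t\mapsto t^{\min\lecurl{1/p,1}}$, the problems of minimising $\dwp\ler{\mu,\zeta_{a,b}^{\alpha}(x)}$ and of minimising the raw cost over $\alpha$ share the same minimising set of parameters; the corresponding value of $\dwp$ is the quantity $m$ of the statement.

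Next I would identify the full minimising set. An $f$ is a global minimiser of $\int_E f\,c\dd\mu$ if and only if $f=1$ $\mu$–a.e.\ on $H_a$ and $f=0$ $\mu$–a.e.\ on $H_b$, while on the bisector $\lecurl{c=0}$ the value of $f$ is free (it contributes $0$ to the cost). Letting $f$ vary arbitrarily on $B(ax,bx)$ — for instance through constant values — the attainable masses $\alpha=\int_E f\dd\mu$ of global minimisers sweep out exactly the closed interval $\lesq{\mu(H_a),\ \mu(H_a)+\mu\ler{B(ax,bx)}}$. Translating back, $\dwp\ler{\mu,\zeta_{a,b}^{\alpha}(x)}=m$ holds precisely when the constrained problem with prescribed mass $\alpha$ admits a global minimiser, i.e.\ exactly when $\alpha$ lies in this interval. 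Hence the maximal and minimal minimising parameters are $\mu(H_a)+\mu\ler{B(ax,bx)}$ and $\mu(H_a)$, and their difference is $\mu\ler{B(ax,bx)}$, which is \eqref{eq:bisector}.

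The one genuinely delicate point — the main obstacle — is showing that a parameter $\alpha$ outside the interval gives a \emph{strictly} larger distance rather than merely a non–smaller one. I would settle this by the exchange estimate
$$
\int_E f\,c\dd\mu-\int_{H_a}c\dd\mu=\int_{H_a}(f-1)\,c\dd\mu+\int_{H_b}f\,c\dd\mu\geq 0,
$$
noting that both integrands are nonnegative ($f-1\leq 0$ and $c<0$ on $H_a$; $f\geq 0$ and $c>0$ on $H_b$), and that equality forces $f=1$ a.e.\ on $H_a$ and $f=0$ a.e.\ on $H_b$ — which in turn forces $\int_E f\dd\mu\in\lesq{\mu(H_a),\mu(H_a)+\mu\ler{B(ax,bx)}}$. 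Some bookkeeping is required when $\mu$ has atoms on the bisector, but the degenerate case $\mu\ler{B(ax,bx)}=0$ merely collapses the interval to the single point $\mu(H_a)$, in agreement with the claimed formula.
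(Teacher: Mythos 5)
Your proposal is correct and follows essentially the same route as the paper: both exploit that a coupling with $\zeta_{a,b}^{\alpha}(x)$ is determined by how $\mu$ splits between $ax$ and $bx$ (your density $f$ is exactly the Radon--Nikodym derivative of the paper's $\mu^a_\alpha$ with respect to $\mu$), identify the global minimum as $\int_E\min\{\norm{ax-y}^p,\norm{bx-y}^p\}\dd\mu(y)$ via the threshold at the bisector, and conclude that the minimising $\alpha$'s form the interval $[\mu(S_{ax}),\mu(S_{ax})+\mu(B(ax,bx))]$. The paper's strictness argument uses the sets $W_\delta$ where your exchange estimate appears, but both versions rest on the same facts and yield the same conclusion.
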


Now, we prove the main theorem of this section.

\begin{theorem}\label{thm:468}
	Let $E$ be a separable real Hilbert space and $p$ be an even integer with $p\geq 4$. 
	Assume that $\Phi\colon\Wp(E)\to\Wp(E)$ is an isometry. Then there exists an (affine) isometry $\psi\in\isom(E)$ such that
	\begin{equation}\label{eq:468}
	\Phi(\mu) = \psi_\#\mu \qquad (\mu\in\Wp(E)).
	\end{equation}
\end{theorem}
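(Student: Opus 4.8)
The plan is to follow the route indicated by the $p=4$, $E=\R^2$ sketch, but to arrange the final reconstruction so that it goes through verbatim in arbitrary (possibly infinite) dimension. First, by Corollary \ref{cor:Dirac} there is a $\psi\in\isom(E)$ with $\Phi(\delta_x)=\delta_{\psi(x)}$; replacing $\Phi$ by $\psi^{-1}_\#\circ\Phi$ I may assume that $\Phi$ fixes every Dirac measure. Since $\potmu(x)=\dwpp(\mu,\delta_x)$ and $\Phi$ is a $\dwp$-isometry fixing all $\delta_x$, the potentials are then preserved, i.e. $\potmu\equiv\mathcal{T}_{\Phi(\mu)}^p$ for every $\mu$. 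The goal becomes to show $\Phi=\id$, which yields $\Phi=\psi_\#$ after undoing the reduction.

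The heart of the argument is to detect line-support from the potential. Writing $p=2k$ and expanding $\norm{x-y}^{2k}=\big(\norm{x}^2-2\inner{x}{y}+\norm{y}^2\big)^k$, I would read off the degree-two homogeneous part in $x$ of $\potmu$ (legitimate because $t\mapsto\potmu(tx)$ is a degree-$2k$ polynomial whose quadratic coefficient is recoverable by finite differences, in the spirit of Lemma \ref{lem:peak}). This produces a bounded self-adjoint operator $A_\mu$ with
\[
A_\mu=k\Big(\int_E\norm{y}^{2k-2}\dd\mu(y)\Big)\,\id+2k(k-1)\,N_\mu,\qquad \inner{N_\mu z}{z}=\int_E\inner{y}{z}^2\norm{y}^{2k-4}\dd\mu(y),
\]
and preservation of potentials gives $A_\mu=A_{\Phi(\mu)}$. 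Here $N_\mu$ is the covariance operator of the push-forward of $\mu$ under the radial bijection $y\mapsto\norm{y}^{k-2}y$, so $N_\mu$ has rank at most one exactly when $\mu$ is supported on a line through the origin. The hard part — which I expect to be the main obstacle — is to disentangle $N_\mu$ from the entangling multiple of the identity for general $k$: unlike the case $k=1$ in the sketch, one must first recover the scalar $\int_E\norm{y}^{2k-2}\dd\mu$ (as the unique point of the essential spectrum of $A_\mu$, equivalently via the trace in finite dimensions, since $N_\mu$ is trace class) before isolating $N_\mu$. Granting this, $\Phi$ preserves the rank of $N_\mu$, hence maps the class of measures supported on a line through the origin onto itself; combined with the fixing of Diracs, it maps $\Wp(L)$ onto $\Wp(L)$ for every one-dimensional linear subspace $L$. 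Since $\Wp(L)\cong\Wp(\R)$ and $p=2k\neq2$, the isometric rigidity of $\Wp(\R)$ from \cite[Theorem 3.16]{GTV} together with the fixing of Diracs forces $\Phi$ to fix every measure supported on such an $L$; in particular $\Phi$ fixes every two-point measure $\zeta_{a,b}^{\alpha}(x)$ of \eqref{eq:2-p}.

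With that in hand the reconstruction is uniform in the dimension. Because $\Phi$ fixes each $\zeta_{a,b}^{\alpha}(x)$ and preserves $\dwp$, the right-hand side of \eqref{eq:bisector} is the same for $\mu$ and $\Phi(\mu)$, so Lemma \ref{lem:bisector} gives $\mu\big(B(ax,bx)\big)=\Phi(\mu)\big(B(ax,bx)\big)$ for all $x\neq0$, $a\neq b$. As $(x,a,b)$ vary, the bisectors $B(ax,bx)=\{x\}^\perp+\tfrac{a+b}{2}x$ sweep out every affine hyperplane, so $\mu$ and $\nu:=\Phi(\mu)$ agree on all affine hyperplanes. I would then fix a finitely supported $\mu$: for each unit vector $u$, writing $P_u(y)=\inner{y}{u}$, the hyperplane masses with normal $u$ are exactly the atoms of the marginal $(P_u)_\#\mu$, which is finitely supported with total mass one; since $(P_u)_\#\nu$ matches these atoms, it is purely atomic and equals $(P_u)_\#\mu$. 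All one-dimensional marginals of $\mu$ and $\nu$ thus coincide, and the Cramér--Wold / characteristic-function argument (valid on separable Hilbert space) gives $\nu=\mu$.

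Finally, finitely supported measures are $\dwp$-dense in $\Wp(E)$ and $\Phi$ is continuous, so $\Phi=\id$ on all of $\Wp(E)$; undoing the initial reduction yields $\Phi=\psi_\#$, which is \eqref{eq:468}. To summarise, the only genuinely new difficulty compared with the planar $p=4$ sketch is the spectral disentangling of $N_\mu$ from $A_\mu$ in the second paragraph; once line-detection is secured, the passage through \cite[Theorem 3.16]{GTV}, Lemma \ref{lem:bisector}, and the marginal/Cramér--Wold reconstruction works regardless of $\dim E$.
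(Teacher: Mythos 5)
Your proposal is correct and follows essentially the same route as the paper: reduce to $\Phi$ fixing all Dirac measures, extract the second-order part of the potential $\potmu$, detect (and identify) the measures supported on lines through the origin, invoke \cite[Theorem 3.16]{GTV} on each such line, and then use Lemma \ref{lem:bisector} to match affine-hyperplane masses. The ``spectral disentangling'' you single out as the main obstacle is resolved in the paper exactly as you suggest: in finite dimensions one sums $\func(\mu,e_j)$ over an orthonormal basis (your trace), and in infinite dimensions one takes $\lim_{n\to\infty}\func(\mu,e_n)$ along an orthonormal basis, where the covariance term vanishes by Cauchy--Schwarz and dominated convergence (your essential-spectrum point); this is legitimate because $\int_E\norm{y}^{2k-2}\dd\mu(y)<\infty$ for $\mu\in\Wp(E)$ with $p=2k$, so $N_\mu$ is indeed positive trace class. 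The only genuine divergence is the final reconstruction: from the equality of $\mu$ and $\Phi(\mu)$ on all affine hyperplanes you pass to one-dimensional marginals and conclude by Cram\'er--Wold, whereas the paper stays elementary, covering $\supp(\mu)$ by finitely many disjoint parallel hyperplanes $H_j^{(y)}=x_j+\{y\}^{\perp}$ for generic unit vectors $y$ to pin down first the support and then the atoms of $\Phi(\mu)$. Your variant is shorter and equally valid (characteristic functionals determine Borel probability measures on a separable Hilbert space), while the paper's avoids any appeal to Fourier-analytic uniqueness; both finish by density of finitely supported measures and continuity of $\Phi$.
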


\begin{proof}
Again by Corollary \ref{cor:Dirac}, we can assume without loss of generality that all Dirac measures are fixed. 
Observe that 
\begin{align*}
\norm{x-y}^p = \ler{\norm{x}^2-2\inner{x}{y}+\norm{y}^2}^k= \sum_{i,j,\ell\in\N, i+j+\ell = k} \binom{k}{i, j, \ell}(-2)^i\inner{x}{y}^i \norm{y}^{2j}\norm{x}^{2\ell},
\end{align*}
where $\binom{k}{i,j,\ell} = \frac{k!}{i!\,j!\,\ell!}$ is the trinomial coefficient. The potential function has the form
\begin{align*}
\potmu(x) = \sum_{\substack{i,j,\ell\in\N, \\ i+j+\ell = k}} \binom{k}{i, j, \ell}(-2)^i\norm{x}^{2\ell} \int_E \inner{x}{y}^i \norm{y}^{2j} \dd\mu(y).
\end{align*}
In particular, $\potmu(0) = \int_E \norm{y}^{2k} \dd\mu(y)$ and its derivative at $x=0$ is the bounded linear functional
$$
D\potmu(0)\colon E\to \R, \qquad h\mapsto \binom{k}{1, k-1, 0}(-2) \int_E \inner{h}{y} \norm{y}^{2k-2} \dd\mu(y).
$$
Indeed the property $\potmu(x) = \potmu(0) + D\potmu(0)x + \mathcal{O}(\norm{x}^2)$ is easily seen from the Cauchy--Schwartz inequality.
The $\mathcal{O}(\norm{x}^2)$ term is
\begin{align*}
\potmu(x) - \potmu(0) - D\potmu(0)x =\sum_{\substack{i,j,\ell\in\N, \\ i+j+\ell = k, \; i+2\ell\geq 2}}
\binom{k}{i, j, \ell}(-2)^i\norm{x}^{2\ell} \int_E \inner{x}{y}^i \norm{y}^{2j} \dd\mu(y),
\end{align*}
which clearly coincides with $\potphimu(x)-\potphimu(0)-D\potphimu(0)x$. 

Notice that for any fixed vector $x\in E$ with $\norm{x}=1$ we have the following expression for the (constant multiple of the) second directional derivative along the direction $x$. We again use the Cauchy--Schwartz inequality:
\begin{align*}
\func(\mu,x) :&= \lim_{t\to 0+} \frac{1}{t^2}\ler{\potmu(tx) - \potmu(0) - D\potmu(0)tx} 
\\
&= \lim_{t\to 0+} 
\sum_{\substack{i,j,\ell\in\N, \\ i+j+\ell = k, \; i+2\ell\geq 2}}
\binom{k}{i, j, \ell}(-2)^i t^{2\ell+i-2} \int_E \inner{x}{y}^i \norm{y}^{2j} \dd\mu(y) \\
&= k \int_E \norm{y}^{2k-2} \dd\mu(y)
+ 2k(k-1) \int_E \inner{x}{y}^2 \norm{y}^{2k-4} \dd\mu(y).
\end{align*}
Note that we have $\func(\mu,x) = \func(\Phi(\mu),x)$ for all measures $\mu$ and unit vectors $x$.

Now assume that $2\leq\dim E<\infty$. Take an orthonormal base $\{e_n\}_{n=1}^{\dim E}$, and consider
\begin{align*}
\sum_{n=1}^{\dim E} \func(\mu,e_j) &= k \dim E \int_E \norm{y}^{2k-2} \dd\mu(y)
+ 2k(k-1) \sum_{n=1}^{\dim E} \int_E \inner{e_j}{y}^2 \norm{y}^{2k-4} \dd\mu(y) \\
&= \ler{k \dim E + 2k(k-1)} \int_E \norm{y}^{2k-2} \dd\mu(y).
\end{align*}
This shows that $$\int_E \norm{y}^{2k-2} \dd\Phi(\mu)(y) = \int_E \norm{y}^{2k-2} \dd\mu(y),$$ hence 
\begin{align}\label{eq:inner2}
\int_E \inner{x}{y}^2 \norm{y}^{2k-4} \dd\Phi(\mu)(y) = \int_E \inner{x}{y}^2 \norm{y}^{2k-4} \dd\mu(y)
\end{align}
holds for all $\mu\in\Wp(E)$ and $x\in E$ such that $\|x\|=1$.
On the other hand, if $\dim E = \infty$, then we again consider an orthonormal base $\{e_n\}_{n=1}^{\infty}$, and take the limit:
\begin{align*}
\lim_{n\to\infty}\func(\mu,e_n) & = k \int_E \norm{y}^{2k-2} \dd\mu(y)
+ 2k(k-1) \lim_{n\to\infty} \int_E \inner{e_n}{y}^2 \norm{y}^{2k-4} \dd\mu(y) \\
& = k \int_E \norm{y}^{2k-2} \dd\mu(y),
\end{align*}
where we used the Cauchy--Schwartz inequality and Lebesgue's majorant convergence theorem. Therefore we obtain \eqref{eq:inner2} for this case too.

Now, $\mu$ being supported on the linear subspace $\{x\}^\perp$ is equivalent to saying that the expression in \eqref{eq:inner2} is zero. But this holds if and only if $\Phi(\mu)$ is supported on $\{x\}^\perp$. If we consider this property for an orthonormal basis, we easily infer that $\Phi$ maps $\Wp(L)$ bijectively onto itself for every one-dimensional linear subspace $L\subset E$. By \cite[Theorem 3.16]{GTV} we conclude that the restriction $\Phi|_{\Wp(L)}$ is the identity map, and thus that $\Phi$ fixes all measures which are supported on two points whose affine hull contains $0$. In other words, $\Phi(\zeta_{a,b}^{\alpha}(x))=\zeta_{a,b}^{\alpha}(x)$ holds true for all $\zeta_{a,b}^{\alpha}$ defined in \eqref{eq:2-p}.
By Lemma \ref{lem:bisector} and the observation made just before it, we have \begin{equation}\label{ah}
\mu\ler{H} = \Phi(\mu)(H)
\end{equation}
for every affine hyperplane $H$.  Let $\mu$ be a finitely supported measure with support $\supp(\mu)=\{x_1,\dots,x_n\}$ and denote by $d$ the dimension of the linear subspace spanned by $\supp(\mu)$. We claim that $\supp\big(\Phi(\mu)\big)\subseteq\{x_1,\dots,\x_n\}$. Indeed, as $\mu$ is finitely supported, the set $\{x_i-x_j\,|\,1\leq i\neq j\leq n\}$ is finite, and therefore there exists infinitely many $y\in E$ such that $\|y\|=1$ and $\inner{y}{x_i-x_j}\neq0$ for all $i\neq j$. Let us denote the set of such vectors by $Y$. For all $y\in Y$ we can define a collection of affine hyperplanes as follows: $H_j^{(y)}:=x_j+\{y\}^{\perp}$ ($1\leq j\leq n)$. Observe that $H_j^{(y)}\cap\supp(\mu)=\{x_j\}$ and that $H^{(y)}:=\bigcup_{j=1}^n H_j^{(y)}$ is a disjoint union of affine hyperplanes such that $\supp(\mu)\subseteq H^{(y)}$. Now it follows from \eqref{ah} that
\begin{equation}
    1=\sum_{i=1}^n\mu(\{x_i\})=\sum_{i=1}^n\mu(H_i^{(y)})=\sum_{i=1}^n\Phi(\mu)(H_i^{(y)})=\Phi(\mu)(\bigcup_{i=1}^n H_i^{(y)})=\Phi(\mu)(H^{(y)})
\end{equation}
The set $H^{(y)}$ is closed, and therefore $\supp\big(\Phi(\mu)\big)\subseteq H^{(y)}$. Since $\bigcap_{y\in Y}H^{(y)}=\{x_1,\dots,x_n\}$, we have $\supp\big(\Phi(\mu)\big)\subseteq\{x_1,\dots,x_n\}$. In fact, to obtain this, it is enough to choose a collection of linearly independent vectors $\{y_j\}_{j=1}^{d+1}$ from $Y$. From here we can finish the proof easily. Let us fix a $y\in Y$ and observe that 
\begin{equation}
\mu(\{x_j\})=\mu(H_j^{(y)})=\Phi(\mu)(H_j^{(y)})=\Phi(\mu)\Big(H_j^{(y)}\cap\supp\big(\Phi(\mu)\big)\Big)=\Phi(\mu)(\{x_j\})
\end{equation}
for all $x_j\in\supp(\mu)$. So we get $\Phi(\mu) = \mu$ for all finitely supported measures. A continuity argument then completes the proof.
\end{proof}

\section{Isometric rigidity of $\Wp(E)$ for $0<p<1$ and a more general class of Wasserstein spaces}\label{s:ple1}
The case $0<p<1$, that is when the transport cost is a concave function of the distance, is special in many regards. From the theoretical point of view, this case is interesting because the transport plans have rather different structure. From the economic point of view, this setting seems to be the most natural one when  moving a mass has a cost which is proportionally less if the distance increases. For more details about the case of strictly concave cost functions we refer the reader to the introduction of \cite{GM} (see also Section 3.3.2 in \cite{Santambrogio} and Section 2.4.4 in \cite{villani-ams-book}).

In this section we prove that $\Wp(E)$ is isometrically rigid if $0<p<1$. In fact, this will be a straightforward consequence of our more general result: $\Wo(X)$ is isometrically rigid if the metric of the underlying space $X$ satisfies the \emph{strict triangle inequality}
\begin{equation}\label{eq:stricttr}
    \rho(x,y) < \rho(x,z) + \rho(z,y) \quad (x,y,z,\in X, z\notin\{x,y\}).
\end{equation}
 As it was mentioned before, $\Wp(E)$ is basically $\Wo(X)$ where $(X,\rho) = (E,\norm{\cdot}^{p})$ and $\rho$ satisfies the strict triangle inequality, see \cite[Lemma 5.1]{GM}. 

To avoid trivialities we assume that $X$ has at least three points. The next statement is part of the folklore, however, we decided to state it here and relegate its proof into the Appendix for the reader's convenience. Briefly, it says that if the strict triangle inequality holds, then the shared weight between two measures stays in place under an optimal transport plan. 

Recall that if $\mu$ and $\nu$ are nonnegative measures, then the symbols $(\mu-\nu)_+$ and $(\mu-\nu)_-$ stand for the positive and negative parts of $\mu-\nu$, respectively, while $\mu\wedge\nu$ denotes the greatest lower bound of $\mu$ and $\nu$. For positive measures $\mu^0$ and $\nu^0$ with $\mu^0(X) = \nu^0(X)$, the symbol $\Pi(\mu^0,\nu^0)$ denotes the set of all positive measures on $X\times X$ such that their marginals are $\mu^0$ and $\nu^0$.

\begin{theorem}\label{thm:marad}
    Let $(X,\rho)$ be a complete separable metric space that satisfies the strict triangle inequality \eqref{eq:stricttr}, and denote by $D$ the diagonal $\{(x,x)\colon x\in X\}$ in $X\times X$. If $\mu,\nu\in\Wo(X)$ and $\pi\in\Pi^0(\mu,\nu)$, then
    $$\pi|_D = (\id\times\id)_\#(\mu\wedge\nu).$$ 
    In particular, if we set $\mu^0 := (\mu-\nu)_+ = \mu-(\mu\wedge\nu)$ and $\nu^0 := (\mu-\nu)_- = \nu-(\mu\wedge\nu)$, then
    \begin{equation*}
        \dwo(\mu,\nu) = \inf_{\vartheta\in\Pi(\mu^0,\nu^0)}\left\{
        \int_{X\times X} \rho(x,y) \dd\vartheta(x,y)
        \right\}.
    \end{equation*}
\end{theorem}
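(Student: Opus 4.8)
The plan is to read off the structure of an optimal plan directly from the strict triangle inequality. Throughout I write $\lambda$ for the Borel measure on $X$ determined by $\pi|_D=(\id\times\id)_\#\lambda$, i.e.\ $\lambda(A)=\pi\bigl(D\cap(A\times X)\bigr)$. For \emph{any} $\pi\in\Pi(\mu,\nu)$ the two marginal conditions give $\lambda(A)=\pi\bigl(D\cap(A\times X)\bigr)\le\pi(A\times X)=\mu(A)$ and likewise $\lambda(A)\le\nu(A)$, so that $\lambda\le\mu\wedge\nu$ automatically. The entire content of the theorem is therefore the reverse inequality $\lambda\ge\mu\wedge\nu$ for an \emph{optimal} $\pi$.

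To obtain it I would use that an optimal plan is concentrated on a $\rho$-monotone set: there is a Borel set $\Gamma\subseteq X\times X$ with $\pi(\Gamma)=1$ such that $\rho(x,y)+\rho(x',y')\le\rho(x,y')+\rho(x',y)$ for all $(x,y),(x',y')\in\Gamma$ (this is the standard monotonicity of optimizers, valid here since $\rho$ is continuous and $\dwo(\mu,\nu)<\infty$). Applying this to a pair $(x,y),(y,y')\in\Gamma$ with $x\neq y$ and $y'\neq y$ yields $\rho(x,y)+\rho(y,y')\le\rho(x,y')$, which together with the ordinary triangle inequality forces the equality $\rho(x,y')=\rho(x,y)+\rho(y,y')$; by the \emph{strict} triangle inequality \eqref{eq:stricttr} this is impossible unless $y\in\{x,y'\}$, a contradiction. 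Hence no point can be simultaneously a genuine destination and a genuine source inside $\Gamma$, that is, the sets $A:=\{y:\exists x\neq y,\ (x,y)\in\Gamma\}$ and $B:=\{x:\exists y\neq x,\ (x,y)\in\Gamma\}$ are disjoint.

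It remains to convert this dichotomy into a statement about $\lambda$. The off-diagonal part $\pi|_{D^c}$ has first marginal $\mu-\lambda$, which is concentrated on $B$, and second marginal $\nu-\lambda$, which is concentrated on $A$; since $A\cap B=\emptyset$ these two marginals are mutually singular. Using the lattice identity $(\lambda+\alpha)\wedge(\lambda+\beta)=\lambda+(\alpha\wedge\beta)$ with $\alpha=\mu-\lambda$, $\beta=\nu-\lambda$ and $\alpha\wedge\beta=0$, I conclude $\mu\wedge\nu=\lambda$, i.e.\ $\pi|_D=(\id\times\id)_\#(\mu\wedge\nu)$, which is the first assertion. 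Consequently $\pi|_{D^c}$ is a coupling of $\mu^0=\mu-(\mu\wedge\nu)$ and $\nu^0=\nu-(\mu\wedge\nu)$, and since $\pi|_D$ carries no cost we get $\dwo(\mu,\nu)=\int\rho\,\dd\pi=\int\rho\,\dd(\pi|_{D^c})\ge\inf_{\vartheta\in\Pi(\mu^0,\nu^0)}\int\rho\,\dd\vartheta$. The opposite inequality is immediate: for any $\vartheta\in\Pi(\mu^0,\nu^0)$ the measure $\vartheta+(\id\times\id)_\#(\mu\wedge\nu)$ is a coupling of $\mu$ and $\nu$ of the same cost as $\vartheta$, so $\dwo(\mu,\nu)\le\int\rho\,\dd\vartheta$. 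This yields the displayed cost formula.

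I expect the main obstacle to be measure-theoretic bookkeeping rather than geometry: one must secure a $\rho$-monotone Borel (or universally measurable) set $\Gamma$ of full $\pi$-measure, verify that the projections $A$ and $B$ are analytic, hence universally measurable, so that the phrases ``concentrated on $B$'' and ``mutually singular'' are legitimate, and make sure the pointwise dichotomy is invoked only through the full-measure set $\Gamma$ so that points of zero mass cause no trouble. Once these routine points are settled, both the plan-rigidity statement and the cost identity follow as above.
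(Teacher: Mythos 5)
Your proposal is correct and follows essentially the same route as the paper: both reduce the claim to the mutual singularity of the marginals of the off-diagonal part of $\pi$, and both obtain the key disjointness of ``genuine sources'' and ``genuine destinations'' by combining $c$-monotonicity of the optimal plan with the strict triangle inequality (the paper works with $\supp(\pi)$ and cites \cite[Theorem 6.1.4]{Ambrosio}, following \cite{Pegon}, where you use a full-measure monotone set $\Gamma$). Your explicit lattice identity and the two-sided derivation of the cost formula merely spell out steps the paper leaves implicit.
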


\begin{definition}[Metric $\lambda$--ratio set]
    Let $(X,\rho)$ be a complete separable metric space, $0<\lambda<1$, and $\mu,\nu\in\Wo(X)$. Then their metric $\lambda$--ratio set is
    $$ M_\lambda(\mu,\nu) = \left\{ \eta\in\Wo(X) \colon \dwo(\mu,\eta) = \lambda\cdot\dwo(\mu,\nu), \; \dwo(\eta,\nu) = (1-\lambda)\cdot\dwo(\mu,\nu) \right\}.$$
    The set $M_{1/2}$ is sometimes called the \emph{metric midpoint set} of $\mu$ and $\nu$.
\end{definition}

\begin{definition}[Composition/gluing of transport plans]\label{def:comp}
    Let $(X,\rho)$ be a complete separable metric space and denote by $X_1, X_2, X_3$ three identical copies of $X$. Let $\mu_j\in\prob(X_j)$ ($j=1,2,3$), and $\pi_{12}\in\Pi(\mu_1,\mu_2)$, $\pi_{23}\in\Pi(\mu_2,\mu_3)$. 
    Consider their disintegrations
    $$
    \pi_{12} = \int_{X_2} \pi_{12;2}^{(x_2)}\otimes\delta_{x_2}\dd\mu_2(x_2), \qquad \pi_{23} = \int_{X_2} \delta_{x_2}\otimes\pi_{23;2}^{(x_2)}\dd\mu_2(x_2)
    $$
    where $\pi_{12;2}\colon X_1 \to \prob(X_1)$, $\pi_{23;2}\colon X_1 \to \prob(X_3)$ are measurable mappings.
    Define the measure
    $$\ppi := \int_{X_2} \pi_{12;2}^{(x_2)}\otimes\delta_{x_2}\otimes\pi_{23;2}^{(x_2)}\dd\mu_2(x_2) \in \prob(X_1\times X_2 \times X_3), $$
    whose marginals are clearly $\pi_{12}$ and $\pi_{23}$ on $X_1\times X_2$ and $X_2 \times X_3$, respectively. We call the marginal of $\ppi$ on $X_1\times X_3$ the composition/gluing of the transport plans $\pi_{12}$ and $\pi_{23}$, in notation $\pi_{13} := \pi_{23}\circ\pi_{12}$.
    For more details, see \cite[p. 212--214]{villani-ams-book} or \cite[p. 122--123]{Ambrosio}.
\end{definition}
The following lemma plays a crucial role in the metric characterization of Dirac masses.

\begin{lemma}\label{lem:0<p<1Dirac}
	Let $(X,\rho)$ be a complete separable metric space that satisfies the strict triangle inequality \eqref{eq:stricttr}. Consider two distinct measures $\mu,\nu\in\Wo(X)$ and a $\lambda\in(0,1)$. Then the following are equivalent:
	\begin{itemize}
	    \item[(i)] the supports of both $\mu^0 = (\mu-\nu)_+$ and $\nu^0 = (\mu-\nu)_-$ are singletons, that is, $\mu^0 = t\cdot\delta_x$, $\nu^0 = t\cdot\delta_y$ with some $0<t\leq 1$ and $x\neq y$,
	    \item[(ii)] the metric $\lambda$--ratio set $M_\lambda(\mu,\nu)$ is a singleton.
	\end{itemize}
	Moreover, in this case the unique element of the metric $\lambda$--ratio set is 
	\begin{equation}\label{eq:lambdaratio}
	(1-\lambda)\cdot\mu+\lambda\cdot\nu = (\mu\wedge\nu) + (1-\lambda) t\cdot\delta_x + \lambda t\cdot\delta_y.
	\end{equation}
\end{lemma}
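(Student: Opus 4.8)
\textbf{Proof proposal for Lemma \ref{lem:0<p<1Dirac}.}

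The plan is to prove the two implications separately, using Theorem \ref{thm:marad} throughout to reduce every $\Wo$-distance computation to the ``surplus/deficit'' measures $\mu^0 = (\mu-\nu)_+$ and $\nu^0 = (\mu-\nu)_-$. The key reduction is that, by Theorem \ref{thm:marad}, the common mass $\mu\wedge\nu$ is stationary under any optimal plan, so $\dwo(\mu,\nu)$ depends only on optimally transporting $\mu^0$ onto $\nu^0$, and both measures have total mass $t := \mu^0(X) = \nu^0(X) \in (0,1]$.

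For the implication (i)$\Rightarrow$(ii), I would first verify directly that the measure in \eqref{eq:lambdaratio} belongs to $M_\lambda(\mu,\nu)$. When $\mu^0 = t\cdot\delta_x$ and $\nu^0 = t\cdot\delta_y$, the only transport plan moves all of the mass $t$ from $x$ to $y$, so $\dwo(\mu,\nu) = t\cdot\rho(x,y)$; setting $\eta = (\mu\wedge\nu) + (1-\lambda)t\cdot\delta_x + \lambda t\cdot\delta_y$, one checks $\eta - \mu$ and $\eta - \nu$ have singleton-supported positive/negative parts, giving $\dwo(\mu,\eta) = \lambda t\cdot\rho(x,y)$ and $\dwo(\eta,\nu) = (1-\lambda)t\cdot\rho(x,y)$, exactly as required. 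The harder direction is uniqueness: I must show no other $\eta$ lies in $M_\lambda(\mu,\nu)$. Here I would take any $\eta \in M_\lambda(\mu,\nu)$, glue an optimal plan from $\mu$ to $\eta$ with one from $\eta$ to $\nu$ (Definition \ref{def:comp}), and observe that the saturation of the triangle inequality $\dwo(\mu,\nu) = \dwo(\mu,\eta) + \dwo(\eta,\nu)$ forces the glued plan $\pi_{13}$ to be optimal for $(\mu,\nu)$ and to concentrate, $\mu$-almost all mass, on geodesic behaviour. Since the only mass moved is the single atom $t$ from $x$ to $y$, the intermediate measure $\eta$ must place mass on the segment, and the strict triangle inequality \eqref{eq:stricttr} forbids any mass being routed through a third point $z\neq x,y$ (as that would make the composed cost strictly exceed $\rho(x,y)$, contradicting optimality). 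This pins $\eta$ down to the form in \eqref{eq:lambdaratio}.

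For (ii)$\Rightarrow$(i), I would argue by contraposition: assuming $\mu^0$ or $\nu^0$ is not a singleton, I construct two distinct elements of $M_\lambda(\mu,\nu)$. If, say, $\mu^0$ has at least two atoms (or more generally nondegenerate support), I split the optimal transport problem from $\mu^0$ to $\nu^0$ into two pieces along a measurable partition of the source mass and form two different interpolating measures, each of which sits at metric ratio $\lambda$ by the additivity of the cost over the partition. The essential point is that whenever the support of $\mu^0$ or $\nu^0$ is nondegenerate there is genuine freedom in how the intermediate mass is distributed, so $M_\lambda$ cannot be a singleton.

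The main obstacle I anticipate is the uniqueness argument in (i)$\Rightarrow$(ii), specifically controlling the gluing: I must ensure that the composed plan $\pi_{13}$ is genuinely optimal and then extract the rigidity that all transported mass stays on the ``$x$-to-$y$'' route. This rests on combining the cost-additivity from the midpoint condition with the strict triangle inequality to exclude detours through any third point, and on applying Theorem \ref{thm:marad} again to the pairs $(\mu,\eta)$ and $(\eta,\nu)$ to control where $\eta$ can place its mass. The concave/strict-inequality hypothesis is exactly what makes this detour-exclusion work, so this is where the full strength of \eqref{eq:stricttr} is used.
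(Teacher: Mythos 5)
Your overall strategy coincides with the paper's: for (i)$\Rightarrow$(ii) you glue optimal plans for $(\mu,\eta)$ and $(\eta,\nu)$, use the saturation $\dwo(\mu,\eta)+\dwo(\eta,\nu)=\dwo(\mu,\nu)$ to force the composed plan to be the (unique, by Theorem \ref{thm:marad}) optimal plan $t\cdot\delta_{(x,y)}+(\id\times\id)_\#(\mu\wedge\nu)$, and invoke the strict triangle inequality to conclude that $\ppi$-a.e.\ triple satisfies $v_2\in\{v_1,v_3\}$, which pins $\eta$ down to \eqref{eq:lambdaratio}; for (ii)$\Rightarrow$(i) you argue by contraposition and split an optimal plan over a partition of the source to manufacture several elements of $M_\lambda(\mu,\nu)$. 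This is exactly the paper's route (the paper takes disjoint neighbourhoods $U,U^c$ of two points $x_1\neq y_1$ with $(x_1,x_2),(y_1,y_2)\in\supp(\varpi)\setminus D$, and reweights $\varpi|_{U\times X}$ and $\varpi|_{U^c\times X}$ by a one-parameter family of pairs $(\alpha,\beta)$ subject to the cost constraint).

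The one place where your sketch asserts something that genuinely needs proof is the final step of (ii)$\Rightarrow$(i): you say that the ``genuine freedom'' in distributing the intermediate mass shows $M_\lambda(\mu,\nu)$ is not a singleton, but freedom in the \emph{construction} does not immediately give distinctness of the \emph{resulting measures}. The intermediate measure is a marginal, $\eta^{\alpha,\beta}=(\proj_2)_\#\varpi_1^{\alpha,\beta}+(\proj_1)_\#\varpi_2^{\alpha,\beta}$, and when $\mu^0,\nu^0$ are non-atomic with entangled supports it is not obvious that two different weightings cannot collapse to the same marginal. The paper spends a full paragraph on this: assuming $\eta^{\alpha,\beta}=\eta^{\gamma,\delta}$ with $(\alpha,\beta)\neq(\gamma,\delta)$, it derives two expressions for $\nu-\mu$ which force both $(\nu-\mu)|_U$ and $(\nu-\mu)|_{U^c}$ to be positive measures, hence $\mu=\nu$, a contradiction. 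You should supply this (or an equivalent) injectivity argument; without it the contrapositive direction is incomplete. The rest of your outline, including the verification that \eqref{eq:lambdaratio} does lie in $M_\lambda(\mu,\nu)$ via Theorem \ref{thm:marad}, is sound.
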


\begin{proof}
    (i)$\Longrightarrow$(ii): To make the presentation more transparent, we use the notation $X_1, X_2, X_3$ for three identical copies of $X$, as in Definition \ref{def:comp}. Assume that (i) holds. By Theorem \ref{thm:marad} 
    we obtain that there is only one optimal coupling between $\mu$ and $\nu$, namely, 
    $$\varpi := t\cdot\delta_{(x,y)} + (\id\times\id)_\#(\mu\wedge\nu).$$ 
    Hence, $\dwo(\mu,\nu) = t \, \rho(x,y)$.
    Consider an $\eta\in M_\lambda(\mu,\nu)$, two optimal couplings $\pi_{12}\in\Pi^0(\mu,\eta)$, $\pi_{23}\in\Pi^0(\eta,\nu)$, and their composition $\pi_{13}$. 
    We claim that $\pi_{13} = \varpi$. Indeed this can be seen by the following estimation which goes along the line of the estimation given in \cite[p. 213]{villani-ams-book}:
    \begin{align*}
       d_{\mathcal{W}_1}&(\mu,\nu) = \int_{X_1\times X_3} \rho(v_1,v_3) \dd\varpi(v_1,v_3)\leq \int_{X_1\times X_3} \rho(v_1,v_3) \dd\pi_{13}(v_1,v_3) \\
        &= \int_{X_1\times X_2 \times X_3} \rho(v_1,v_3) \dd\ppi(v_1,v_2,v_3)\leq \int_{X_1\times X_2 \times X_3} \rho(v_1,v_2) + \rho(v_2,v_3) \dd\ppi(v_1,v_2,v_3) \\
        &= \int_{X_1\times X_2} \rho(v_1,v_2) \dd\pi_{12}(v_1,v_2) + \int_{X_2 \times X_3} \rho(v_2,v_3) \dd\pi_{23}(v_2,v_3) \\
        &= \dwo(\mu,\eta) + \dwo(\eta,\nu)= \dwo(\mu,\nu).
    \end{align*}
    Since we must have equations in place of the two inequalities above, on the one hand this implies $\pi_{13} = \varpi$, as was claimed. On the other hand, by the strict triangle inequality we obtain that 
    \begin{equation}\label{eq:pi-ae}
        v_2\in\{v_1,v_3\} \quad\text{holds for}\quad \ppi\text{-a.e.}\quad (v_1,v_2,v_3). 
    \end{equation}
    
    Let us introduce the notations $\proj_{13}\colon X_1\times X_2\times X_3 \to X_1\times X_3,\, \proj_{13}(v_1,v_2,v_3) = (v_1,v_3)$, $D_{13} = \{(v_1,v_3)\in X_1\times X_3\colon v_1=v_3\}$, $D_{123} = \{(v_1,v_2,v_3)\in X_1\times X_2\times X_3\colon v_1=v_2=v_3\}.$ Note that 
    \begin{equation}\label{eq:pi-supp}
        \supp\ler{\ppi} \subset \proj_{13}^{-1}[\supp\ler{\varpi}] \subset \proj_{13}^{-1}[D_{13}\cup\{(x,y)\}.
    \end{equation}
    Utilising \eqref{eq:pi-ae}--\eqref{eq:pi-supp} we observe that 
    \begin{align*}
        t = \varpi\ler{\{(x,y)\}} = \ppi\ler{\{x\}\times X_2\times\{y\}} = \ppi\ler{\{x\}\times \{x,y\}\times\{y\}}
    \end{align*}
    and
    \begin{align*}
        1-t = \varpi(D_{13}) = \ppi\ler{\proj_{13}^{-1}[D_{13}]} = \ppi\ler{D_{123}}.
    \end{align*}
    This in turn implies that $\ppi = \widehat{\ppi} + (\id\times\id\times\id)_\#(\mu\wedge\nu)$ where $\supp(\widehat{\ppi}) \subset \{x\}\times \{x,y\}\times\{y\}$, and therefore 	$\eta=(1-\lambda)\cdot\mu+\lambda\cdot\nu$.
    
    \smallskip
    
    (ii)$\Longrightarrow$(i): In this part we use only one copy of $X$. Suppose that (i) does not hold. Consider an optimal coupling $\varpi\in\Pi^0(\mu,\nu)$ and set $D := \{(x,x)\colon x\in X\}$. There exist two distinct points $(x_1,x_2), (y_1,y_2)\in \supp(\varpi) \setminus D$. By interchanging the role of $\mu$, $\nu$ and $\lambda$, $1-\lambda$ if necessary, we may assume without loss of generality that $x_1\neq y_1$. Take two disjoint neighbourhoods $U, V\subset X$ of $x_1, y_1$, respectively.
    We clearly have
    $$
    \int_{U\times X}\rho(x,y)\dd\varpi(x,y) > 0 \quad\mbox{and}\quad \int_{U^c\times X}\rho(x,y)\dd\varpi(x,y) > 0
    $$
    where $U^c=X\setminus U$. For any pair $(\alpha,\beta)\in[0,1]\times[0,1]$ define
    $$
    \varpi_1^{\alpha,\beta} := \alpha\cdot\varpi|_{U\times X} + \beta\cdot\varpi|_{U^c\times X} \quad\mbox{and}\quad \varpi_2^{\alpha,\beta} := (1-\alpha)\cdot\varpi|_{U\times X} + (1-\beta)\cdot\varpi|_{U^c\times X}.
    $$ 
    One sees easily that there exist infinitely many pairs $(\alpha,\beta) \in (0,1)^2$ satisfying
    \begin{equation}\label{eq:tav-felez1}
        \int_{X\times X}\rho(x,y)\dd\varpi_1^{\alpha,\beta}(x,y) = \lambda\cdot\dwo(\mu,\nu)
    \end{equation}
    and
    \begin{equation}\label{eq:tav-felez2}   
        \int_{X\times X}\rho(x,y)\dd\varpi_2^{\alpha,\beta}(x,y) = (1-\lambda)\cdot\dwo(\mu,\nu).
    \end{equation}
    
    In what follows, for any such pair $(\alpha,\beta)\in(0,1)\times(0,1)$ we construct a probability measure $\eta^{\alpha,\beta} \in M_\lambda(\mu,\nu)$ and show that for distinct pairs we obtain different measures. Informally speaking, the plan $\eta^{\alpha,\beta}$ transfers only some of the mass from $\mu$ according to $\varpi_1^{\alpha,\beta}$ and leaves the rest intact. More precisely, denote by $\proj_j$ the projection map $\proj_j\colon X\times X \to X,\,(x_1,x_2)\mapsto x_j,\,(j=1,2)$ and define
    $$
    \xi := \varpi_1^{\alpha,\beta} + (\id\times\id)_\#\ler{(\proj_1)_\#\varpi_2^{\alpha,\beta}} \in \prob(X\times X)
    $$
    and
    $$
    \eta^{\alpha,\beta} := (\proj_2)_\#\xi = (\proj_2)_\#\varpi_1^{\alpha,\beta} + (\proj_1)_\#\varpi_2^{\alpha,\beta} \in \prob(X).
    $$
    Clearly, $\xi\in\Pi\ler{\mu,\eta^{\alpha,\beta}}$. Define also the measure $\zeta\in\Pi\ler{\eta^{\alpha,\beta},\nu}$ as follows:
    $$
    \zeta := (\id\times\id)_\#\ler{(\proj_2)_\#\varpi_1^{\alpha,\beta}} + \varpi_2^{\alpha,\beta} \in \prob(X\times X).
    $$
    Then $\eta^{\alpha,\beta} \in M_\lambda(\mu,\nu)$ follows from the following inequalities:
   $$\dwo\ler{\mu,\eta^{\alpha,\beta}} \leq \int_{X\times X} \rho(x,y)\dd\xi(x,y) = \int_{X\times X}\rho(x,y)\dd\varpi_1^{\alpha,\beta}(x,y) = \lambda\,\dwo(\mu,\nu),$$
    $$\dwo\ler{\eta^{\alpha,\beta},\nu} \leq \int_{X\times X} \rho(x,y)\dd\zeta(x,y) = \int_{X\times X}\rho(x,y)\dd\varpi_2^{\alpha,\beta}(x,y) = (1-\lambda)\,\dwo(\mu,\nu).$$

    Now, consider another pair $(\gamma,\delta) \in (0,1)\times(0,1)$, $(\gamma,\delta)\neq(\alpha,\beta)$ which also satisfies \eqref{eq:tav-felez1}-\eqref{eq:tav-felez2}, and assume that $\eta^{\alpha,\beta} = \eta^{\gamma,\delta}$. Our aim is to get a contradiction. Notice that $\alpha\neq\gamma$ and $\beta\neq\delta$ follow.
    Without loss of generality we may assume that $\alpha > \gamma$, which forces $\beta<\delta$. This and the very definitions of $\eta^{\alpha,\beta}$ and $\eta^{\gamma,\delta}$ imply that
    $$
    \ler{\proj_2}_\#\ler{(\alpha-\gamma)\cdot\varpi|_{U\times X} - (\delta-\beta)\cdot\varpi|_{U^c\times X} }= \ler{\proj_1}_\#\ler{(\alpha-\gamma)\cdot\varpi|_{U\times X} - (\delta-\beta)\cdot\varpi|_{U^c\times X}}.
    $$
    Dividing both sides by $\alpha-\gamma$ and setting $\varepsilon := \frac{\delta-\beta}{\alpha-\gamma}$, a simple rearrangement gives
    \begin{align*}
    \nu-\mu =(1+\varepsilon)\ler{\ler{\proj_2}_\#\ler{\varpi|_{U^c\times X}}-\ler{\proj_1}_\#\ler{\varpi|_{U^c\times X}}}.
    \end{align*}
    Notice that as a consequence the restriction $(\nu-\mu)|_U$ is a positive measure. Very similarly,
    \begin{align*}
    \nu-\mu = \ler{1+\tfrac{1}{\varepsilon}}\ler{\ler{\proj_2}_\#\ler{\varpi|_{U\times X}}-\ler{\proj_1}_\#\ler{\varpi|_{U\times X}}},
    \end{align*}
    thus the restriction $(\nu-\mu)|_{U^c}$ is a positive measure too. But this means that $\nu-\mu$ is a positive measure and thus $\mu=\nu$, since $\mu$ and $\nu$ are both probability measures, a contradiction.
\end{proof}
Lemma \ref{lem:0<p<1Dirac} gives a metric characterization of the property when two measures differ only in one atom. The following definition captures the property when two measures differ only in finitely many atoms.
\begin{definition}[Neighbouring measures]\label{def:neighbouring}
    We say that two measures $\mu,\nu\in\Wo(X)$ are \emph{neighbouring} (we denote it by $\mu\neighb\nu$), 
    if $\mu-\nu$ is a finitely supported (signed) measure.
The \emph{neighbouring set of $\mu$} is defined by $\mathcal{N}(\mu) := \{\nu\colon \mu\neighb\nu\}$.
\end{definition}
Observe that $\mu\neighb\nu$ if and only if there exists a finite sequence $\mu_0,\mu_1,\dots,\mu_n\in\Wo(X)$, $n\in\N$, $\mu = \mu_0$, $\nu = \mu_n$ such that $M_{1/2}(\mu_{j-1},\mu_{j})$ is a singleton for all $j=1,2,\dots,n$. Since this gives a metric characterization of the neighbouring relation,  $\mu\neighb\nu$ if and only if $\Phi(\mu)\neighb\Phi(\nu)$. Furthermore,
$$\mathcal{N}(\Phi(\mu))=\Phi[\mathcal{N}(\mu)]=\{\Phi(\xi)\colon\xi\in\mathcal{N}(\mu)\}.$$

Now we are in the position to prove the main result of this section, namely that $\Wo(X)$ is rigid whenever $X$ satisfies the strict triangle inequality.
\begin{theorem}\label{thm:concave-cost}
	Let $(X,\rho)$ be a complete separable metric space that satisfies the strict triangle inequality \eqref{eq:stricttr}.
	Assume that $\Phi\colon\Wo(X)\to\Wo(X)$ is an isometry. Then there exists an isometry $\psi\in\isom(X)$ such that
	\begin{equation}\label{eq:concave-cost}
	\Phi(\mu) = \psi_\#\mu \qquad (\mu\in\Wo(X)).
	\end{equation}
\end{theorem}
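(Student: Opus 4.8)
The plan is to follow the two-step scheme: first show that $\Phi$ cannot split mass, i.e. that it maps $\Delta(X)$ onto $\Delta(X)$, and then bootstrap from the induced isometry of $X$ to the whole space. For the first step I would exploit that, by Lemma \ref{lem:0<p<1Dirac}, the relation ``$\mu$ and $\nu$ differ by moving the mass of a single atom'' is purely metric: it holds exactly when $M_{1/2}(\mu,\nu)$ is a singleton. Call a measure $\mu$ a \emph{midpoint} if $\mu$ is the unique element of $M_{1/2}(\nu_1,\nu_2)$ for some distinct $\nu_1,\nu_2$; this is a metric property, hence preserved by $\Phi$ in both directions. A direct inspection of the formula \eqref{eq:lambdaratio} (with $\lambda=\tfrac12$) shows that $\mu$ is a midpoint if and only if its atomic part has at least two atoms; equivalently, the \emph{non-midpoints} are precisely the measures whose atomic part is a single Dirac mass (this class also contains all purely continuous measures). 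In particular every $\delta_x$ is a non-midpoint, and therefore so is each $\Phi(\delta_x)$.

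To upgrade ``non-midpoint'' to ``Dirac'' I would bring in the neighbouring relation of Definition \ref{def:neighbouring}, which is likewise metric and satisfies $\mathcal{N}(\Phi(\mu))=\Phi[\mathcal{N}(\mu)]$. All finitely supported measures form a single neighbouring class, namely $\mathcal{N}(\delta_x)$, and this class is dense in $\Wo(X)$. Since $\Phi$ is a homeomorphism it carries dense sets to dense sets, so $\mathcal{N}(\Phi(\delta_x))=\Phi[\mathcal{N}(\delta_x)]$ is dense. I would then classify which neighbouring classes can be simultaneously dense and contain a non-midpoint: a class whose elements retain a fixed nonzero continuous part $\mu_c$ lies inside the weakly closed proper set $\{\nu\colon \nu\ge\mu_c\}$, hence is not dense; a class all of whose elements have infinitely many atoms contains no non-midpoint at all. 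Thus the finitely supported class is the only one that is both dense and contains a non-midpoint. Consequently $\Phi(\delta_x)$ is finitely supported and a non-midpoint, i.e. a Dirac measure. This produces $\psi\in\isom(X)$ with $\Phi(\delta_x)=\delta_{\psi(x)}$, because $\dwo(\delta_x,\delta_y)=\rho(x,y)$.

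For the second step I would replace $\Phi$ by $\psi_\#^{-1}\circ\Phi$ and thereby assume $\Phi(\delta_x)=\delta_x$ for all $x$, so that it suffices to prove $\Phi=\id$. Since $M_\lambda(\delta_x,\delta_y)=\{(1-\lambda)\delta_x+\lambda\delta_y\}$ is a preserved singleton, every two-atom measure $(1-\lambda)\delta_x+\lambda\delta_y$ is fixed. I would then induct on the number of atoms: any measure with $n$ atoms can be realized as the unique element of some $M_\lambda(\nu_1,\nu_2)$ with $\nu_1,\nu_2$ having one fewer atom and differing by a single atom move (concretely, transfer the mass of one atom onto a second atom so as to annihilate it, choosing $\lambda$ to balance the two measures). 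The induction hypothesis fixes $\nu_1,\nu_2$, hence fixes the $n$-atom measure. As finitely supported measures are dense and $\Phi$ is continuous, we conclude $\Phi=\id$, and therefore the original isometry equals $\psi_\#$, which is \eqref{eq:concave-cost}.

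The delicate point is entirely in the first step. The strict triangle inequality enters through Theorem \ref{thm:marad} and Lemma \ref{lem:0<p<1Dirac}, and is exactly what renders the combinatorial notions ``single atom move'', ``midpoint'', and ``neighbouring'' metrically detectable; without it (for instance over the geodesic space $([0,1],|\cdot|)$) mass-splitting isometries such as the flip $j$ do occur. I expect the main obstacle to be the clean separation of the finitely supported class from every other neighbouring class---precisely, checking that a class carrying a persistent continuous part fails to be dense while a class with infinitely many atoms admits no non-midpoint---since this is where one must exclude the a priori possibility that $\Phi(\delta_x)$ acquires a continuous component.
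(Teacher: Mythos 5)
Your proposal is correct and follows essentially the same route as the paper: both arguments characterise Dirac measures metrically by combining Lemma \ref{lem:0<p<1Dirac} (detecting single-atom moves via singleton midpoint sets, hence detecting measures with at most one atom as the non-midpoints) with the density of the neighbouring class $\mathcal{N}(\mu)$, and then fix all finitely supported measures by induction on the size of the support using singleton $M_\lambda$ sets, concluding by density and continuity. Your phrasing via ``at most one atom plus dense neighbouring class'' is a harmless variant of the paper's ``exactly one atom plus dense neighbouring class,'' and your annihilation construction for the inductive step is exactly the decomposition $\mu=(1-\lambda)\mu_1+\lambda\mu_2$ used in the paper.
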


\begin{proof}
First observe that for any measure $\mu\in\Wo(X)$ the following assertions are equivalent:
    \begin{itemize}
        \item[(1)] $\mu$ has exactly one atom, that is, the set $\{x\in X\colon \mu(\{x\})>0\}$ is a singleton,
        \item[(2)] there exists a $\nu\in\Wo(X)$, $\nu\neq\mu$ such that $M_{1/2}(\mu,\nu)$ is a singleton, but there are no $\eta,\vartheta\in\Wo(X)$, $\eta\neq\vartheta$ such that $M_{1/2}(\eta,\vartheta) = \{\mu\}$. 
    \end{itemize}
Indeed, this is straightforward by Lemma \ref{lem:0<p<1Dirac}. Next, using (1)$\iff$(2) we notice that the following are also equivalent:
\begin{itemize}
    \item[(i)] $\mu$ is a Dirac measure
    \item[(ii)] $\mu$ has exactly one atom and $\mathcal{N}(\mu)$ is dense in $\Wo(X)$.
\end{itemize}
 The direction (i)$\Longrightarrow$(ii) is obvious, since $\mathcal{N}(\delta_x)$ is plainly the set of all finitely supported measures. As for the (ii)$\Longrightarrow$(i) direction, write $\mu$ as $\mu = t\cdot\delta_x + \mu'$ where $\mu'$ has no atom. Clearly,
    \begin{align*}\mathcal{N}(\mu) = \left\{\sum_{j=1}^n \lambda_j\cdot\delta_{x_j} + \mu'\,\colon\, n\in\N, \lambda_j>0, \sum_{j=1}^n \lambda_j = t\right\}.
    \end{align*}
The closure of $\mathcal{N}(\mu)$ is the set $\{t\cdot\eta + \mu' \colon\, \eta\in\Wo(X)\}$ which coincides with $\Wo(X)$ if and only if $\mu=\delta_x$.

In light of the above we conclude that the image of any Dirac measure is again a Dirac measure, and thus the map $\psi$ defined by $\Phi(\delta_x)=:\delta_{\psi(x)}$ is an isometry of $X$. In fact, without loss of generality we may assume that $\Phi(\delta_x) = \delta_x$ for all $x$. 

What remains to be proven is that $\Phi$ fixes all finitely supported measures, which we shall do by using an induction. 
We already know this for measures with singleton support. Fix a $k\in\N$, $k\geq 1$ and suppose we proved the statement for measures supported on at most $k$ points. Take a measure $\mu$ supported on $k+1$ points. It is straightforward that $\mu$ can be expressed as $(1-\lambda)\cdot\mu_1+\lambda\cdot\mu_2$ with some $0<\lambda<1$ and $\mu_1,\mu_2\in\Wo(X)$ whose supports are sets with $k$ elements. We have
\begin{align*}
    \{\mu\} 
    = M_\lambda(\mu_1,\mu_2)
    = M_\lambda(\Phi(\mu_1),\Phi(\mu_2))
    = \Phi\ler{M_\lambda(\mu_1,\mu_2)}
    = \{\Phi\ler{\mu}\}.
\end{align*}
The proof is complete.
\end{proof}

Recall that the example given in Section \ref{s:example} shows that the above theorem is sharp in the sense that in general we cannot conclude isometric rigidity for $\Wp(X)$ if $1<p<\infty$.

Now, isometric rigidity of $\Wp(E)$ for $0<p<1$ is an immediate consequence of Theorem \ref{thm:concave-cost}. In fact, Theorem \ref{thm:concave-cost} implies the isometric rigidity of $\Wp(X)$ for $0<p<1$ and for all Polish space $(X,\varrho)$, as the $p$-th power of $\varrho$ satisfies the strict triangle inequality.

\begin{corollary}\label{cor:hilple1}
Let $(X,\varrho)$ be a complete separable metric space and $0<p<1$. Assume that $\Phi\colon\Wp(X)\to\Wp(X)$ is an isometry. Then there exists an isometry $\psi\in\isom(X)$ such that
\begin{equation*}
\Phi(\mu) = \psi_\#\mu \qquad (\mu\in\Wp(X)).
\end{equation*}
\end{corollary}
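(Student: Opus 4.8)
The plan is to reduce the statement to Theorem~\ref{thm:concave-cost} by \emph{snowflaking} the metric. I would set $\rho := \varrho^p$ on $X$. Since $0<p<1$, the function $f\colon[0,\infty)\to[0,\infty)$, $f(t)=t^p$, is strictly increasing, strictly concave, and satisfies $f(0)=0$. First I would check that $(X,\rho)$ is again a complete separable metric space satisfying the strict triangle inequality \eqref{eq:stricttr}. Symmetry and positivity of $\rho$ are immediate, and because $f$ is an increasing homeomorphism of $[0,\infty)$ fixing $0$, a sequence is $\varrho$-Cauchy (resp. $\varrho$-convergent) if and only if it is $\rho$-Cauchy (resp. $\rho$-convergent); hence completeness and separability carry over verbatim.

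The heart of this preliminary step is the strict triangle inequality, which follows from strict subadditivity of $f$. Indeed, for $a,b>0$ the strict concavity of $f$ together with $f(0)=0$ yields $f(a)>\tfrac{a}{a+b}f(a+b)$ and $f(b)>\tfrac{b}{a+b}f(a+b)$, whence $f(a+b)<f(a)+f(b)$. Now fixing $x,y,z\in X$ with $z\notin\{x,y\}$, so that $a:=\varrho(x,z)>0$ and $b:=\varrho(z,y)>0$, and using the triangle inequality for $\varrho$, the monotonicity of $f$, and then strict subadditivity, I obtain
\begin{equation*}
\rho(x,y)=f\bigl(\varrho(x,y)\bigr)\le f\bigl(\varrho(x,z)+\varrho(z,y)\bigr)=f(a+b)<f(a)+f(b)=\rho(x,z)+\rho(z,y),
\end{equation*}
which is exactly \eqref{eq:stricttr} (this is \cite[Lemma~5.1]{GM}).

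Next I would identify the two Wasserstein spaces. Since $1/p>1$, the exponent $\min\{1/p,1\}$ in \eqref{eq:wasser_def} equals $1$, so for all $\mu,\nu\in\prob(X)$,
\begin{equation*}
\dwp(\mu,\nu)=\inf_{\pi\in\Pi(\mu,\nu)}\int_{X\times X}\varrho(x,y)^p\dd\pi(x,y)=\inf_{\pi\in\Pi(\mu,\nu)}\int_{X\times X}\rho(x,y)\dd\pi(x,y)=\dwo(\mu,\nu),
\end{equation*}
where the last distance is computed in $(X,\rho)$. Moreover $\mu\in\Wp(X)$ precisely when $\int_X\varrho(x,\hat x)^p\dd\mu(x)<\infty$, that is when $\int_X\rho(x,\hat x)\dd\mu(x)<\infty$, that is when $\mu\in\Wo\bigl((X,\rho)\bigr)$. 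Thus $\Wp(X)$ and $\Wo\bigl((X,\rho)\bigr)$ coincide as metric spaces, point for point and distance for distance.

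Consequently $\Phi$ is an isometry of $\Wo\bigl((X,\rho)\bigr)$, and since $(X,\rho)$ satisfies the strict triangle inequality, Theorem~\ref{thm:concave-cost} will furnish a $\psi\in\isom(X,\rho)$ with $\Phi(\mu)=\psi_\#\mu$ for all $\mu$. Finally I would note that an isometry of $(X,\rho)$ is automatically an isometry of $(X,\varrho)$: from $\varrho(\psi(x),\psi(y))^p=\rho(\psi(x),\psi(y))=\rho(x,y)=\varrho(x,y)^p$ and the injectivity of $f$ we get $\varrho(\psi(x),\psi(y))=\varrho(x,y)$, so $\isom(X,\rho)=\isom(X,\varrho)=\isom(X)$ and $\psi\in\isom(X)$, as required. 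The only step requiring genuine care is the strict triangle inequality above; everything else is a transparent change of variables through $f(t)=t^p$.
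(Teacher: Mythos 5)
Your proposal is correct and follows exactly the paper's own route: snowflake the metric via $\rho=\varrho^p$, verify the strict triangle inequality from strict subadditivity of $t\mapsto t^p$, identify $\Wp(X,\varrho)$ with $\Wo(X,\rho)$ isometrically (using that $\min\{1/p,1\}=1$ for $0<p<1$), and invoke Theorem~\ref{thm:concave-cost}. The paper states this in two sentences citing \cite[Lemma~5.1]{GM}; you have simply supplied the routine verifications in full.
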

Recall that the proof of \eqref{eq:atom} works for the case $0<p<1$ as well, therefore in the Hilbert space case the above corollary could be also proved with the use of potential functions, once we know rigidity on Dirac masses.

Finally, we state another consequence of Theorem \ref{thm:concave-cost} about Wasserstein spaces built on ultrametric spaces. Various geometric properties of such spaces were described by Kloeckner in \cite{ultrametric}.

\begin{corollary}\label{cor:ultra}
Let $(X,\rho)$ be a complete, separable metric space. Suppose that $\rho$ is an ultrametric, that is,
$$
\rho(x,z) \leq \max\{\rho(x,y),\rho(y,z)\} \quad (x,y,z\in X).
$$
Let $0<p<\infty$ and $\Phi\colon\Wp(X)\to\Wp(X)$ be an isometry. Then there exists an isometry $\psi\in\isom(X)$ such that
\begin{equation*}
	\Phi(\mu) = \psi_\#\mu \qquad (\mu\in\Wp(X)).
\end{equation*}
\end{corollary}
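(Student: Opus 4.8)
The plan is to reduce the ultrametric case to the already-proven Theorem \ref{thm:concave-cost} by exhibiting, for every parameter $0<p<\infty$, an auxiliary metric whose associated $1$-Wasserstein space coincides isometrically with $\Wp(X)$ and which satisfies the strict triangle inequality \eqref{eq:stricttr}. The key observation is that raising an ultrametric to any positive power $p$ preserves the ultrametric property: if $\rho(x,z)\leq\max\{\rho(x,y),\rho(y,z)\}$, then $\rho(x,z)^p\leq\max\{\rho(x,y)^p,\rho(y,z)^p\}$ as well, so $\rho^p$ is again an ultrametric on $X$. In particular $\rho^p$ is a genuine metric (no subadditivity issues arise, in contrast to the general non-ultrametric situation where one would need $0<p\leq 1$). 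Thus I would set $\sigma:=\rho^p$ and work with the complete separable metric space $(X,\sigma)$.

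The first step is to verify that an ultrametric satisfies the strict triangle inequality \eqref{eq:stricttr} whenever the middle point is genuinely distinct from the endpoints. Suppose $z\notin\{x,y\}$; I claim $\sigma(x,y)<\sigma(x,z)+\sigma(z,y)$. Since $\sigma$ is an ultrametric we have $\sigma(x,y)\leq\max\{\sigma(x,z),\sigma(z,y)\}$, and because $z\neq x$ and $z\neq y$ both $\sigma(x,z)$ and $\sigma(z,y)$ are strictly positive; hence the maximum is strictly less than the sum of the two positive terms, giving $\sigma(x,y)\leq\max\{\sigma(x,z),\sigma(z,y)\}<\sigma(x,z)+\sigma(z,y)$. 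Therefore $(X,\sigma)$ satisfies \eqref{eq:stricttr}.

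The second step is the identification of spaces. For the metric $\sigma=\rho^p$, the cost $\sigma(x,y)^1=\rho(x,y)^p$ is exactly the cost appearing in the definition of $\dwp$, so by the same elementary computation as in the Example of Section \ref{s:example} we obtain $d_{\Wo(X,\sigma)}(\mu,\nu)=\dwp(\mu,\nu)$ for all $\mu,\nu$; this is precisely the content of \cite[Lemma 5.1]{GM} cited above. Consequently $\Wp(X)$ (built from $\rho$) and $\Wo(X)$ (built from $\sigma$) are the \emph{same} metric space, and any isometry $\Phi$ of the former is an isometry of the latter.

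Finally I would invoke Theorem \ref{thm:concave-cost} applied to $(X,\sigma)$ to conclude that $\Phi(\mu)=\psi_\#\mu$ for some $\psi\in\isom(X,\sigma)$. The last point to check is that $\isom(X,\sigma)=\isom(X,\rho)$: a bijection $\psi$ preserves $\sigma=\rho^p$ if and only if it preserves $\rho$, since $t\mapsto t^p$ is a strictly increasing bijection of $[0,\infty)$. Hence $\psi$ is an isometry of the original space $(X,\rho)$, and the proof is complete. I do not anticipate a serious obstacle here; the only mild subtlety is the degenerate case $\sigma(x,z)=0$ or $\sigma(z,y)=0$, which is excluded precisely by the hypothesis $z\notin\{x,y\}$, so the strict inequality never fails.
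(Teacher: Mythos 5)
Your proposal is correct and follows essentially the same route as the paper: pass to $\sigma=\rho^p$, observe that an ultrametric raised to any positive power is again an ultrametric and hence satisfies the strict triangle inequality, identify $\Wp(X,\rho)$ with $\Wo(X,\sigma)$, and apply Theorem \ref{thm:concave-cost}. The only slight imprecision is that for $p>1$ one has $d_{\Wo(X,\sigma)}=\dwp^{\,p}$ rather than $=\dwp$, but since $t\mapsto t^{1/p}$ is a strictly increasing bijection of $[0,\infty)$ the two metric spaces have the same isometries, which is all the argument needs.
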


\begin{proof} We only have to notice that 
    $\rho^p$ is a metric on $X$ which satisfies the strict triangle inequality, and
    that $\Wp(X,\rho)$ and $\Wo(X,\rho^p)$ contains exactly the same measures. Since $d_{\Wp(X,\rho)}^{\max\{1,p\}} \equiv d_{\Wo(X,\rho^p)}$, we can apply Theorem \ref{thm:concave-cost}.
\end{proof}

\section{Appendix}
\begin{proof}[Proof of Lemma \ref{lem:p>1Dirac}]
    For the direction (i)$\Longrightarrow$(ii) let $\mu = \delta_x$ with some $x\in E$, $\nu\in\Wp(E)$, $\nu\neq\mu$ and $T:=\dwp(\delta_x,\nu)$. By Lemma \ref{lem:geod} the curve 
	$$\gamma\colon [0,\infty)\to \Wp(E), \quad \gamma(t)=(D_x^{t/T})_\# \nu$$ 
	is a geodesic ray. Moreover, $\gamma|_{[0,T]}$ is the unique geodesic segment connecting $\mu$ with $\nu$, since there is only one coupling between them.

For the reverse direction, suppose that $\mu$ is not a Dirac measure but it satisfies (ii). Our aim is to obtain a contradiction. Fix an $x\in E$, set $T=\dwp(\mu,\delta_x)$ and consider the map
	$$
	\gamma\colon [0,T]\to \Wp(E), \quad \gamma(t)=(D_x^{1-t/T})_\# \mu.
	$$
	It is straightforward that $\gamma$ is the unique geodesic that connects $\gamma(0) = \mu$ and $\gamma(T) = \delta_x$. By our assumption, $\gamma$ extends to $[0,\infty)$, denote by $\tgamma$ this extension. Set $\nu = \tgamma(2T)$. Again, Lemma \ref{lem:geod} gives us that 
	$$
	\tgamma|_{[T,2T]}\colon [T,2T]\to \Wp(E),\quad \tgamma(t)=(D_x^{t/T-1})_\# \nu.
	$$
	Now, once again we apply Lemma \ref{lem:geod} to obtain a $\pi\in\Pi^0(\mu,\nu)$ which implements the geodesic segment $\tgamma|_{[0,2T]}$, that is, 
	$$
	\tgamma|_{[0,2T]}\colon [0,2T]\to \Wp(E), \quad \tgamma(t)=\left(g_{t/2T}\right)_\# \pi.
	$$
	In particular, $\delta_x = \tgamma(T) = \left(g_{1/2}\right)_\# \pi$,
	which implies that the support of $\pi$ is contained in $\{(2x-y,y)\colon y\in E\}$. Therefore, $\pi = (\id\times D_x^{-1})_\#\mu$, $\nu = (D_x^{-1})_\#\mu$.
	This means that the transport map $D_x^{-1}$ is optimal between $\mu$ and $\nu$. Observe that therefore the support of $\mu$ must be contained in a one dimensional affine subspace containing $x$. Indeed, otherwise it is easy to see that there exists a better transport plan, see Figure \ref{fig:better}. However, as $x$ was an arbitrary point, the same holds for all $x\in E$. Therefore, $\mu$ is concentrated on one point, a contradiction.
	\end{proof}

	\begin{figure}[H]
	\centering
	\includegraphics[scale=0.8]{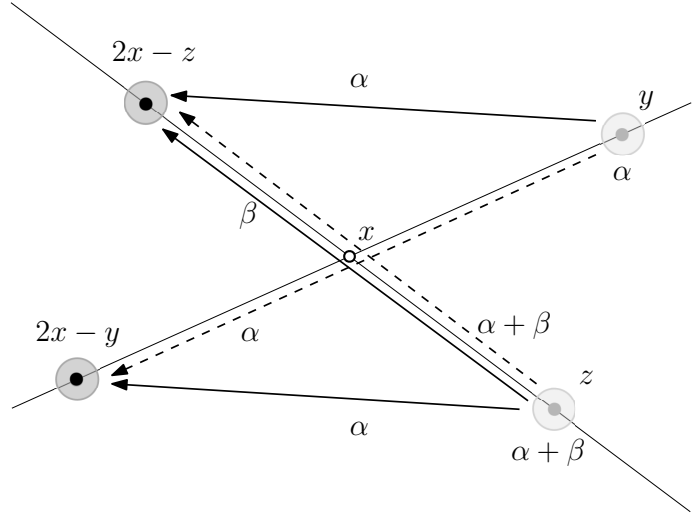}
	\caption{The grey points $y, z$ are in the support of $\mu$, the black ones are their images under the map $D_x^{-1}$, hence they are in the support of $\nu = (D_x^{-1})_\#\mu$. The white point $x$ is not contained in the line spanned by the grey points. The transport map $D_x^{-1}$ (dashed arrows) is not optimal between $\mu$ and $\nu$, since there is a better transport plan (black arrows). This latter plan is to be interpreted in the following way: we transport the mass in the ball around $y$ into the point $y$, then to the point $2x-z$, and finally to the ball around $2x-z$; the mass in the ball around $z$ is first transported into the point $z$, then some part is transported into $2x-y$ and the rest into $2x-z$, finally we transport the masses from these points to the balls around them. These steps are all done along straight line.}\label{fig:better}
\end{figure}
\bigskip

\begin{proof}[Proof of Lemma \ref{lem:p=1Dirac}]
Note that Lemma \ref{lem:geod} cannot be applied here directly. To prove (i)$\Longrightarrow$(ii) assume that $\mu=\delta_x$ for some $x\in E$. We claim that $\eta := (D_x^2)_\#\nu$ satisfies \eqref{eq:W1-Dirac}. 
	In order to see this, consider a sequence $\{\nu_k\}_{k=0}^\infty$ of finitely supported Borel probability measures such that $\lim_{k\to\infty}\dwo(\nu,\nu_k) = 0$. Then by Lemma \ref{lem:geod} we have
	$$
	\dwp(\delta_x,\nu_k) = \dwp\left(\nu_k,(D_x^2)_\#\nu_k\right) = \frac{1}{2}\dwp\left(\delta_x,(D_x^2)_\#\nu_k\right) \qquad (p>1, k\in\N).
	$$
	Set $\eta_k := (D_x^2)_\#\nu_k$ $(k\in\N)$. Then for all $1<p<\infty$, $k\in\N$ we obtain
\begin{align*}
	\int_{\supp(\nu_k)} \|x-y\|^p \dd\nu_k(y) &= \inf_{\pi \in \Pi(\nu_k, \eta_k)} \int_{\supp(\nu_k) \times \supp(\eta_k)} \|x-y\|^p~\dd \pi(x,y)\\
	&= \frac{1}{2^p}\int_{\supp(\eta_k)} \|x-y\|^p \dd\eta_k(y).
\end{align*}
Since both $\nu_k$ and $\eta_k$ are finitely supported, it is easy to see that as $p\to1+$ the above gives
	$$
	\dwo(\delta_x,\nu_k) = \dwo\left(\nu_k,(D_x^2)_\#\nu_k\right) = \frac{1}{2}\dwo\left(\delta_x,(D_x^2)_\#\nu_k\right) \qquad (k\in\N).
	$$
	Note that $\lim_{k\to\infty}\dwo\left((D_x^2)_\#\nu,(D_x^2)_\#\nu_k\right) = 2\lim_{k\to\infty}\dwo(\nu,\nu_k) = 0$. Therefore if we let $k\to\infty$, we obtain \eqref{eq:W1-Dirac}. Note also that if $T=\dwo(\delta_x,\nu)$, then a similar argument shows that $$\gamma\colon [0,\infty)\to \Wo(E), \quad \gamma(t)=(D_x^{t/T})_\# \nu$$ is a geodesic ray in $\Wo(E)$ for all $x\in E$ and $\nu\in\Wo(E)\setminus\{\delta_x\}$.
	
	To prove (ii)$\Longrightarrow$(i) suppose that $\mu$ is not a Dirac measure and that (ii) holds. Our aim is to get a contradiction from this. In such a case there are at least two different points, say $y,z\in E$ in the support of $\mu$. Consider another point $x\in E$ such that $x,y,z$ are not collinear, and set $\nu=\delta_x$. By our assumption, there exists an $\eta\in\Wo(E)$ such that \eqref{eq:W1-Dirac} holds. Since $p=1$, 
	the following transport plan is optimal between $\mu$ and $\eta$: transport everything first into $x$ along straight lines, then redistribute along straight lines to $\eta$. Obviously, this means that every straight line connecting any point of $\supp(\mu)$ and any point of $\supp(\eta)$ must contain $x$. 
	Therefore $\supp(\eta)\subseteq\{x\}$, a contradiction.
\end{proof}

Note that the above proof does not work in one dimension, however, the statement remains valid. Indeed, one can see this using quantile funcitons.
If $\mu$ is not a Dirac measure, then there exists a $t\in\R$ such that $\inf F_{\mu}^{-1}<t\equiv F_{\delta_t}^{-1}<\sup F_{\mu}^{-1}$, and $\mu$ cannot be reflected through $\delta_t$ in the sense of \eqref{eq:W1-Dirac}.\\
\bigskip
\begin{proof}[Proof of Lemma \ref{lem:transl}]
	For any $\pi\in\Pi(\mu,\nu)$ we have $(t_{(v,0)})_\#\pi\in\Pi((t_v)_\#\mu,\nu)$, and vica--versa. Hence
	\begin{align*}
	\dwt^2\left((t_v)_\#\mu,\nu\right) &= \inf_{\pi \in \Pi(\mu, \nu)} \int_{E \times E} \norm{x-y}^2~\dd \ler{(t_{(v,0)})_\#\pi}(x,y) \\
	&= \inf_{\pi \in \Pi(\mu, \nu)} \int_{E \times E} \norm{x+v-y}^2~\dd \pi(x,y)\\
	&= \inf_{\pi \in \Pi(\mu, \nu)} \int_{E \times E} \norm{x-y}^2+\norm{v}^2+2\inner{x}{v}-2\inner{y}{v}~\dd \pi(x,y) \\
	&= \dwt^2\left(\mu,\nu\right) + \norm{v}^2 + 2\int_{E} \inner{x}{v}~\dd \mu(x) -2\int_{E}\inner{y}{v}~\dd \nu(y),
	\end{align*}
	which gives \eqref{eq:transl}. The identity \eqref{eq:bari-transl} follows if we translate both arguments in the left-hand side by the vector $m(\nu)$.
\end{proof}
\bigskip

\begin{proof}[Proof of Lemma \ref{lem:ort}]
	Consider $\widehat{\pi} := \mu\otimes\nu$. Then
	\begin{align*}
	\int_{E\times E} \norm{x-y}^2 \dd\widehat{\pi}(x,y) &= \int_{E\times E} \norm{(x-m(\mu)) - (y-m(\nu)) + (m(\mu)-m(\nu))}^2 \dd\widehat{\pi}(x,y) \\
	&= \sigma^2 + \rho^2 + \norm{m(\mu)-m(\nu)}^2,
	\end{align*}
	where we used the Cauchy--Schwartz inequality and Fubini's theorem in order to see that $\int_{E\times E} \inner{x}{y} \dd\widehat{\pi}(x,y) = \langle m(\mu),m(\nu)\rangle$.
	Therefore, \eqref{eq:ort} holds if and only if the coupling $\widehat{\pi}$ is optimal.
	
	If $\mu$ and $\nu$ are supported on orthogonal affine subspaces, then every coupling is optimal by the Pythagorean theorem.
	On the other hand, if they are not supported on orthogonal affine subspaces, then there exist points $x,y\in \supp(\mu)$ and $z,t\in \supp(\nu)$ such that $\inner{x-y}{z-t} < 0$. By a short calculation we obtain $$\norm{x-z}^2+\norm{y-t}^2 > \norm{x-t}^2+\norm{y-z}^2.$$ In particular this means, that it is better to transport $\varepsilon$ mass from $x$ to $t$ and $\varepsilon$ mass from $y$ to $z$, then to transport $\varepsilon$ mass from $x$ to $z$ and $\varepsilon$ mass from $y$ to $t$. Of course this property also holds for points close enough to $x,y,z,t$. Therefore, we see that either $(x,z)$ or $(y,t)$ cannot be in the support of an optimal coupling. So, $\mu\otimes\nu$ is not optimal. 
\end{proof}
\bigskip

\begin{proof}[Proof of Lemma \ref{lem:bisector}]
	Note that any $\pi\in\Pi(\zeta_{a,b}^{\alpha}(x),\mu)$ can be written in the form $$\pi = \delta_{ax}\otimes\mu^a_\alpha + \delta_{bx}\otimes\mu^b_\alpha$$ with some positive measures $\mu^a_\alpha, \mu^b_\alpha$, $\mu^a_\alpha+\mu^b_\alpha = \mu$, $\mu^a_\alpha(E)=\alpha$, $\mu^b_\alpha(E)=1-\alpha$. Denote by $S_{ax}(ax,bx)$ and $S_{bx}(ax,bx)$ the open halfspaces containing $ax$ and $bx$, respectively, and whose boundaries are the bisector $B(ax,bx)$. Assume that $\mu\ler{S_{ax}(ax,bx)}\leq\alpha$ and $\mu\ler{S_{bx}(ax,bx)}\leq1-\alpha$. Then it is obvious that any coupling $\delta_{ax}\otimes\mu^a_\alpha + \delta_{bx}\otimes\mu^b_\alpha$ such that 
	\begin{align*}\supp\ler{\mu^a_\alpha}\subset S_{ax}(ax,bx)\cup B(ax,bx),\quad \supp(\mu^b_\alpha)\subset S_{bx}(ax,bx)\cup B(ax,bx)
	\end{align*}
	is optimal. In particular, we have 
	\begin{align*}
	\dwp^p(\mu,\zeta_{a,b}^{\alpha}(x)) = \int_{E} \min\{\norm{ax-y}^p, \norm{bx-y}^p\}\dd\mu(y).
	\end{align*}
	Now suppose that $\mu\ler{S_{ax}(ax,bx)}>\alpha$ (the case when $\mu\ler{S_{bx}(ax,bx)}>1-\alpha$ is similar). Choose an optimal coupling $\pi = \delta_{ax}\otimes\mu^a_\alpha + \delta_{bx}\otimes\mu^b_\alpha$. Then $\mu^b_\alpha$ cannot be supported on $S_{bx}(ax,bx)\cup B(ax,bx)$, since $\mu\ler{S_{bx}(ax,bx)\cup B(ax,bx)} < 1-\alpha = \mu^b_\alpha(E)$. For any $\delta>0$ set 
$$W_\delta := \{y\in E\colon \norm{ax-y}^p+\delta < \norm{bx-y}^p\}.$$
It is straightforward that there exists a $\delta>0$ such that $\mu^b_\alpha(W_\delta) > 0$. Hence,
	\begin{align*}
	\dwp^p(\mu,\zeta_{a,b}^{\alpha}(x)) &= \int_E \norm{ax-y}^p\dd\mu^a_\alpha(y) + \int_E \norm{bx-y}^p\dd\mu^b_\alpha(y)\\
	&> \int_{E\setminus W_\delta} \min\{\norm{ax-y}^p, \norm{bx-y}^p\}\dd\mu{(y)}+ \\
	&\hspace{2cm}+ \int_{W_\delta} \norm{ax-y}^p\dd\mu^a_\alpha{(y)} + \int_{W_\delta} \ler{\norm{ax-y}^p+\delta}\dd\mu^b_\alpha{(y)} \\
	&> \int_{E} \min\{\norm{ax-y}^p, \norm{bx-y}^p\}\dd\mu{(y)} + \mu^b_\alpha\ler{W_\delta} \delta.
	\end{align*}
	
	Therefore $m = \int_{E} \min\{\norm{ax-y}^p, \norm{bx-y}^p\}\dd\mu(y)$, and the proof is done.
\end{proof}
\bigskip

\bigskip
\begin{proof}[Proof of Theorem \ref{thm:marad}]
The existence of optimal transport plans is a consequence of the tightness of $\mu$ and $\nu$, which is guaranteed by $X$ being a Polish space, see \cite[pp. 133 and pp. 108]{Ambrosio}.
From here we prove our statement along the lines of \cite[Theorem 2.2]{Pegon}. It is enough to prove that $(\proj_1)_\#(\pi|_{X\setminus D})$ and $(\proj_2)_\#(\pi|_{X\setminus D})$ are singular to each other, for which it suffices to show that 
$$\proj_1[\supp(\pi)\setminus D]\cap\proj_2[\supp(\pi)\setminus D]=\emptyset$$ 
Assume this is not the case, then there exist $x,y,z\in X$ such that $(z,x), (y,z)\in \supp(\pi)\setminus D$. However, since we have
\begin{align*}
    \rho(z,x)+\rho(y,z) > \rho(x,y) + \rho(z,z),
\end{align*}
$\supp(\pi)$ is not $c$--monotone (\cite[Definition 6.1.3]{Ambrosio}), which by \cite[Theorem 6.1.4]{Ambrosio} contradicts the optimality of $\pi$.
\end{proof}

\section{Acknowledgements}
This paper is based on discussions made during research visits at the Institute of Science and Technology (IST) Austria, Klosterneuburg. We are grateful to the Erd\H{o}s group for the warm hospitality.
We are also grateful to Lajos Moln\'ar for his comments on an earlier version of the manuscript and to L\'aszl\'o Erd\H{o}s for his suggestions on the structure and highlights of this paper.


\begin{thebibliography}{99}

\bibitem{Ambrosio}
L. Ambrosio, N. Gigli, G. Savar\'e,
\emph{Gradient flows in metric spaces and in the space of probability measures},
Lectures in Mathematics ETH Z\"urich, Birkh\"auser Verlag, Basel, 2005.

\bibitem{bertrand-kloeckner-hadamard}
J. Bertrand, and B. Kloeckner, \emph{A geometric study of Wasserstein spaces: Hadamard spaces} J. Topol. Anal. { 4}(4) (2012), 515--542.


\bibitem{bertrand-kloeckner-2016}
J. Bertrand, and B. Kloeckner, \emph{A geometric study of Wasserstein spaces: isometric rigidity in negative curvature,} Int. Math. Res. Notices {2016 (5)}, 1368--1386.

\bibitem{dolinar-molnar} G. Dolinar, and L. Moln\'ar, \emph{Isometries of the space of distribution functions with respect to the Kolmogorov--Smirnov metric,} J. Math. Anal. Appl. {\bf 348} (2008), 494--498.


\bibitem{Figalli}
A. Figalli, F. Maggi, A. Pratelli, A mass transportation approach to quantitative isoperimetric inequalities, Invent. Math. 182 (2010), no. 1, 167--211.

\bibitem{GM}
W. Gangbo, R.J. McCann,
The geometry of optimal transportation, 
Acta Math. 177 (1996), no. 2, 113–161.

\bibitem{kuiper} Gy. P. Geh\'er, \emph{Surjective Kuiper isometries},  Houston J. Math. 44 (2018), 263--281.


\bibitem{geher-titkos} Gy. P. Geh\'er, and T. Titkos, \emph{A characterisation of isometries with respect to the Lévy-Prokhorov
metric,} Annali della Scuola Normale Superiore di Pisa - Classe di Scienze, Vol. XIX (2019), 655--677.

\bibitem{isemb-jmaa} Gy. P. Geh\'er, T. Titkos, D. Virosztek, \emph{On isometric embeddings of Wasserstein spaces -- the discrete case}, J. Math. Anal. Appl., Volume 480, Issue 2, 2019, 123435.


\bibitem{GTV} 
Gy. P. Geh\'er, T. Titkos, D. Virosztek, 
\emph{Isometric study of Wasserstein spaces -- the real line}, 
Trans. Amer. Math. Soc.,Vol. 373, (8)2020, 5855--5883.

\bibitem{hairer}
M. Hairer, J.C. Mattingly, M Scheutzow, \emph{Asymptotic coupling and a general form of Harris'theorem with applications to stochastic delay equations} Probab. Theory Related Fields  {149} (2011), 223--259.

\bibitem{navier-stokes}
M. Hairer, J.C. Mattingly, \emph{Spectral gaps in Wasserstein distances and the 2D stochastic Navier--Stokes equations} Ann. Probab. {36}(6) (2008), 2050--2091.


\bibitem{Kloeckner-2010} B. Kloeckner, \emph{A geometric study of Wasserstein spaces: Euclidean spaces,} Annali della Scuola Normale Superiore di Pisa - Classe di Scienze { IX, 2} (2010), 297--323.

\bibitem{kloeckner-hausdorff}
B. Kloeckner, \emph{A generalization of Hausdorff dimension applied to Hilbert cubes and Wasserstein spaces,} J. Topol. Anal. {4}(2) (2012), 203--235.

\bibitem{ultrametric} B. Kloeckner, \emph{A geometric study of Wasserstein spaces: Ultrametrics}  Mathematika {61} (2015), 162--178.



\bibitem{ml1}
 S. Kolouri, A.B. Tosun, J.A. Ozolek, G.K. Rohde, A continuous linear optimal transport approach for pattern analysis in image datasets. Pattern Recognition. 2016;51:453--462.

\bibitem{ml2}
S. Kolouri, S.R. Park, M. Thorpe, D. Slepcev, G.K. Rohde, Optimal mass
transport: Signal processing and machine-learning applications, IEEE signal
processing magazine 34 (4), (2017) 43--59.

\bibitem{LV}
J. Lott, C. Villani, \emph{Ricci curvature for metric-measure spaces via optimal transport}, Ann. of Math. 169 (2009), 903--991.

\bibitem{molnar-levy} L. Moln\'ar, \emph{L\'evy isometries of the space of probability distribution functions,} J. Math. Anal. Appl. {380} (2011), 847--852.


\bibitem{Santambrogio}
F. Santambrogio, Optimal Transport for Applied Mathematicians, Progress in Nonlinear Differential Equations and Their Applications 87, Birkh\"auser Basel (2015).

\bibitem{U}
U. Uhlhorn, 
Representation of symmetry transformations in quantum mechanics, 
\emph{Ark. Fysik} \textbf{23} (1963), 307--340.

\bibitem{Pegon}
P. Pegon, F. Santambrogio, D. Piazzoli, Full characterization of optimal transport plans for concave costs, Discrete and Continuous Dynamical Systems - A, 2015, 35 (12) : 6113--6132.


\bibitem{Vallender} S. S. Vallender, \emph{Calculation of the Wasserstein distance between probability distributions on the line,} Theory Probab. Appl. {18} (1973), 784--786.

\bibitem{villani-book} 
C. Villani, 
\emph{Optimal Transport: Old and New,} 
(Grundlehren der mathematischen Wissenschaften) Springer, 2009.

\bibitem{villani-ams-book} 
C. Villani, 
\emph{Topics in optimal transportation,} 
Graduate studies in Mathematics vol. 58, American Mathematical Society, Providence, RI, 2003.

\bibitem{virosztek-asm-szeg} D. Virosztek, \emph{Maps on probability measures preserving certain distances --- a survey and some new results,} Acta Sci. Math. (Szeged) {84} (2018), 65--80.

\end{thebibliography}
\end{document}